\DeclareSymbolFont{cmletters}{OML}{cmm}{m}{it}
\DeclareSymbolFont{cmsymbols}{OMS}{cmsy}{m}{n}
\DeclareSymbolFont{cmlargesymbols}{OMX}{cmex}{m}{n}
\DeclareMathSymbol{\myjmath}{\mathord}{cmletters}{"7C}
\DeclareMathSymbol{\myamalg}{\mathbin}{cmsymbols}{"71}
\DeclareMathSymbol{\mycoprod}{\mathop}{cmlargesymbols}{"60}
\let\jmath\myjmath
\let\amalg\myamalg
\let\coprod\mycoprod
\numberwithin{equation}{section}
\theoremstyle{definition}
\newtheorem{dfn}{Definition}[section]
\newtheorem{example}{Example}[section]
\newtheorem{remark}{Remark}[section]
\theoremstyle{definition}
\newtheorem{prop}{Proposition}[section]
\newtheorem{thm}[prop]{Theorem}
\newtheorem{conj}{Conjecture}
\newtheorem{corollary}[prop]{Corollary}
\newtheorem{lem}[prop]{Lemma}
\newtheorem*{thm*}{Theorem}
\newtheorem{feeling}{Feeling}
\newcommand{\Z}{\mathbb{Z}}
\newcommand{\N}{\mathbb{N}}
\newcommand{\F}{\mathbb{F}}
\newcommand{\rrangle}{\rangle\!\!\!\rangle}
\newcommand{\llangle}{\langle\!\!\!\langle}
\newcommand{\Lc}{\mathscr{L}}
\newcommand{\Ac}{\mathscr{A}}
\newcommand{\Fc}{\mathscr{F}}
\newcommand{\Oc}{\mathscr{O}}
\newcommand{\Com}{\mathsf{Com}}
\newcommand{\Mod}{\mathsf{Mod}}
\newcommand{\Sch}{\mathsf{Sch}}
\newcommand{\ncspec}{\mathtt{Spc}}
\newcommand{\Rhat}{\widehat{\mathscr{R}}}
\newcommand{\Bhat}{\widehat{\mathscr{B}}}
\newcommand{\azu}{\mathtt{azu}}
\newcommand{\ram}{\mathtt{ram}}
\newcommand{\Simp}{\mathsf{Simp}}
\newcommand{\Frac}{\mathrm{Frac}}
\newcommand{\Spec}{\mathtt{Spec}}
\newcommand{\SDer}{\mathscr{D}er}
\newcommand{\SEnd}{\mathcal{E}{nd}}
\newcommand{\eps}{\underline{\varepsilon}}
\newcommand{\pp}{\mathfrak{p}}
\newcommand{\KW}{\mathrm{KW}}
\newcommand{\eKW}{KW}
\newcommand{\Jac}{\mathrm{Jac}}
\newcommand{\eJac}{J}
\newcommand{\W}{\mathrm{W}}
\newcommand{\eW}{W}
\DeclareMathOperator{\id}{id}
\DeclareMathOperator{\Mor}{Mor}
\DeclareMathOperator{\ob}{ob}
\DeclareMathOperator{\Ann}{Ann}
\DeclareMathOperator{\Aut}{Aut}
\DeclareMathOperator{\End}{End}
\DeclareMathOperator{\Hom}{Hom}
\DeclareMathOperator{\Ext}{Ext}
\DeclareMathOperator{\im}{im}
\DeclareMathOperator{\Der}{\mathrm{Der}}
\DeclareMathOperator{\Gal}{Gal}
\DeclareMathOperator*{\dirlim}{\underset{\longrightarrow}{\lim}}
\begin{document}
\pretitle{\begin{flushleft}\LARGE\sffamily} 
\posttitle{\par\end{flushleft}\rule[8mm]{\textwidth}{0.1mm}}
\preauthor{\begin{flushleft}\large\scshape\vspace{-8mm}}
\postauthor{\end{flushleft}\vspace{-8mm}}
\title{Arithmetic hom-Lie algebras, twisted derivations and non-commutative arithmetic schemes} 
\author{Daniel Larsson\\
\vspace{0.2cm}
\small Buskerud and Vestfold University College\\
  Pb. 235, 3603
 Kongsberg, Norway\\
 \textnormal{\texttt{daniel.larsson@hbv.no}}
 \normalsize\normalfont}
\date{}

\maketitle
\begin{abstract}
%\begin{onecolabstract}
\noindent Hom-Lie algebras are non-associative algebras
generalizing Lie algebras by twisting the Jacobi identity by an
endomorphism. The main examples are algebras of twisted
derivations (i.e., linear maps with a generalized Leibniz rule). Such
generalized derivations seem to pop up in different guises in many parts of 
number theory and arithmetic geometry. In fact, any place something like $\id-\phi$, where $\phi$ is (possibly extended to) a ring morphism, appears, such as in $p$-adic Hodge theory, Iwasawa theory, e.t.c., there is a twisted derivation hiding.  Therefore, hom-Lie algebras appear to have a natural r\^ole to play in many number-theoretical disciplines. This paper is a first step in a study of these operators and associated algebras in an arithmetic-/geometric context.
\end{abstract}
%\end{onecolabstract}

\section{Introduction} 
The usefulness of automorphisms in arithmetic (by which we mean (algebraic) number theory, arithmetic geometry e.t.c) can certainly never be over-exaggerated. Indeed, it can be well argued that automorphisms constitutes the beating heart of arithmetic when it comes to supplying vital tools for the study of specific arithmetic structures, be it number fields, arithmetic schemes, zeta and $L$-functions, motives, in an endless list of topics.  Therefore, the study of the automorphisms themselves and how they relate to the underlying structure (and other structures for that matter) is extremely interesting. In fact, having a rich toolbox of structures where automorphisms of relevant objects appear in different guises is highly desirable. 

In this paper we propose
  one such toolbox, called ``hom-Lie algebras'', introduced in
  \cite{HaLaSi} in the context of quantizations of
  infinite-dimensional Lie algebras (in particular, the Witt--Lie and
  Virasoro algebras) and other ``$q$-deformations'' of Lie algebras (finite or infinite-dimensional). 

These hom-Lie algebras are non-associative, non-commutative
algebras that generalize and, so to speak, are infinitely close to
being Lie algebras. The differing property between hom-Lie algebras
and Lie algebras, is that the Jacobi identity is twisted by a linear
morphism. 

In fact, hom-Lie algebras was introduced as follows \cite{HaLaSi}:
\begin{dfn}\label{dfn:classicalhomLie}
Let $L$ be a vector space over a field $\ell$ of characteristic zero and
$\alpha$ an $\ell$-linear map on $L$. Then a \emph{hom-Lie algebra structure} on
$L$ is an $\ell$-bilinear, skew-symmetric product $\llangle
\cdot,\cdot\rrangle$ satisfying the twisted Jacobi-identity
$$\circlearrowleft_{a,b,c}\big(\llangle \alpha(a)+a,\llangle
b,c\rrangle\rrangle\big)=0.$$
\end{dfn} 
In this paper we will globalize and significantly generalize this definition and show how this
notion might be relevant to number theory.  

Since their introduction in \cite{HaLaSi}, hom-Lie algebras have
generated some interest in variou contexts
(see \cite{Jin,MakSi,YauEnv}, for instance), but in number theory they
have so far evaded attention. This is certainly understandable on one
hand since hom-Lie algebras was introduced for a different purpose, but on the
other hand a shame since they are ``morphism-like Lie algebras'' and
therefore seem extremely well suited for arithmetic. 
 
Just as Lie algebras was initially studied as algebras of derivations, hom-Lie algebras saw their day as algebras of twisted derivations. In this context a twisted
derivation on a $k$-algebra $A$ is a $k$-linear map $\partial$ satisfying a
twisted Leibniz rule:
$$\partial(ab)=\partial(a)b+\sigma(a)\partial(b)$$ for a $k$-algebra
endomorphism $\sigma$. 

Twisted derivations in the realm of arithmetic is nothing original
per se. For instance, A. Buium has introduced what he calls
``$\pi$-derivation operators'' in an attempt to develop a suitable
differential calculus in arithmetic geometry
\cite{BuiumP-adic,BuiumArDer}. In another, similar 
vein, L. di Vizio \cite{diVizio}, J. Sauloy \cite{Sauloy} and Y. Andr\'e  \cite{Andre}, among
others, has studied $q$-difference equations, 
these being equations built from twisted derivation operators. In
fact, one of the foundational reasons for introducing hom-Lie algebras
was as a way to study ($q$-) difference-type operators and their
representations in a
``Lie-algebra-like'' environment. These structures, besides being
beautiful in themselves, are, as is
hopefully amply motivated by the present paper, natural structures for
Arithmetic in its various incarnations (arithmetic geometry, Galois
representations, e.t.c). 

\subsection{Philosophy}
Let me spend a few moments commenting on the philosophy behind the above construction in the context of arithmetic.

Assume for simplicity that we are given an \emph{abelian} group scheme $G_{/R}$ over a ring $R$. Then it can, as in Lie theory, be argued that the Lie algebra to $G$ should be something like $\log G$ and this should give us derivations on the ring of functions on $G$. Now, the Taylor expansion of $\log(\sigma)$ is
$$\log(\sigma)=\sum_{i=0} (-1)^{i+1}\frac{(\id-\sigma)^i}{i},$$ and we see that the first-order term $\big(\log (\sigma)\big)_1$ is $\id-\sigma$. 

Operators on the form $a(\id-\sigma)$ are the most common type of twisted derivations, and in fact, it can be shown that on many rings \emph{all} twisted derivations are of this type (see sections \ref{sec:moduletwisted} and \ref{sec:globaltwisted}). Therefore, as twisted derivations are the most natural source of hom-Lie algebras, we notice that it is reasonable to view hom-Lie algebras as \emph{first-order Lie algebras}. 

Pushing the analogy with Lie groups and Lie algebras and their relation $\mathfrak{g}=\log(G)$, it seems reasonable to view $\big(\log(G)\big)_1$ as the ``true'' hom-Lie algebra. This is what I refer to as \emph{equivariant hom-Lie algebras} in this paper. This is because the original definition of hom-Lie algebra involved only one $\sigma$. An unfortunate result of this, and the main point where the analogy with Lie theory is flawed, is that the product in the equivariant structure is performed ``one $\sigma$ at a time''. 

We now assume that $G_{/R}$ is a finite commutative group scheme over $R$ and we put $\mathrm{homLie}_R(G)$ as the equivariant hom-Lie algebra attached to $G$ by the above construction. Clearly, we can view this as
$$\mathrm{homLie}(G)=\bigoplus_G R(\id-\sigma).$$
As a functor on affine (for simplicity) $R$-schemes we can set 
\begin{align*}
\mathrm{homLie}_R:\quad \mathsf{AffSch}_{/R}&\to \mathsf{EquiHomLie}_{/R},\\ S=\Spec(A)&\mapsto \mathrm{homLie}_R(G)(S)=\bigoplus_G A(\id-\sigma).
\end{align*}Notice that if $G=\Spec(A)$, then $A=\Mor(G,R)$ and so the equivariant hom-Lie algebra of $G$ \emph{on} $G$ should be $\bigoplus_G \Mor(G,R)(\id-\sigma)$. 

This is hom-Lie algebras from the group's perspective. However, there is another very natural perspective, namely from the perspective of the group's representations. It can be argued that the representations, not the algebras (groups) themselves, are the interesting objects in Lie theory. And in fact, it is the representation side I intend to look at in this paper. The main reason for this is that hom-Lie algebras to a very large extent \emph{arises} from group representations on commutative algebras as we will see. This was also the original view-point in the introduction of hom-Lie algebras in \cite{HaLaSi}. 

As Lie algebras measure the ``infinitesimal'' action of the Lie group on some ring, hom-Lie algebras can be said to measure the ``first-order infinitesimal'' effect of the action as the following example hopefully illustrates. 
\begin{example}Let $k$ be a complete field (for simplicity) and consider the field $k(t)$ of rational functions over $k$ in the variable $t$. Put $\sigma(t)=\epsilon t$, $\epsilon\in k$. Then $$D_\epsilon:=(1-\epsilon)^{-1}(\id-\sigma)$$ is a twisted derivation on $k(t)$ as is easly seen. The (left) $k(t)$-module $k(t)\cdot D_\epsilon$ defines a hom-Lie algebra (as we will see). Now, as $\epsilon\to 0$ one can argue successfully that $D_\epsilon\to \frac{d}{dt}$, the ordinary derivation along $t$. Therefore, choosing $\epsilon$ small enough, $\sigma$ becomes close to the identity and 
$D_\epsilon$ close to a derivation. 
\end{example}Of course, in general such a nice and clear-cut interpretation of something approaching zero, is not readily available but the intuition is still very much applicable. It is therefore natural to view the structure of hom-Lie algebras (at least the ones coming from twisted derivations) as measuring the relative effect of $\sigma\in G$, in a sense I hope to make sense of in the main text. 

However, there is one more perspective that is ever-present in twisted derivations and hom-Lie algebras, and this was in fact the \emph{true} reason (although well-hidden) for the introduction of hom-Lie algebras as algebras of twisted derivations in \cite{HaLaSi}; namely, ($\sigma$)-difference equations/operators. 

The subject of difference operators goes back centuries, but fell out of fashion during the past mid-century. Happily though, in recent time there has been a renewed interest in these kinds of operators, particularly in arithmetic. Let us briefly recall the essence. 

Classically one was primarily interested in (algebraic) function fields over $\mathbb{C}$, so we will assume this set-up below. 
\begin{example}\label{exam:qdiff}Of particular interest was (are) the following types of operators. Let $R$ be an $\mathbb{C}$-algebra and consider a (not necessarily proper) subring of $R(\!(t)\!)$.  Then
\begin{itemize}
\item[(a)] $\sigma_{(h)}(f)(t):=f(t+h)$, for any $h\in R$,
\begin{itemize}
\item[(i)] $\partial(f)(t):=(\id-\sigma_{(h)})(f(t))=f(t)-f(t+h)$, 
\item[(ii)] $\partial(f)(t):=h^{-1}\big(\id-\sigma_{(h)}\big)(f(t))=h^{-1}\big(f(t)-f(t+h)\big)$
\end{itemize}
and
\item[(b)] 
$\sigma_q(f)(t):=f(qt)$, for any $q\in R$,
\begin{itemize}
\item[(i)] $\partial(f)(t):=(\id-\sigma_{g})(f(t))=f(t)-f(qt)$,
\item[(ii)] $\partial(f)(t):=\big((1-q)t\big)^{-1}\big(\id-\sigma_q\big)(f(t))=\big((1-q)t\big)^{-1}\big(f(t)-f(qt)\big)$
\end{itemize}
\end{itemize} all define $\sigma$-derivations.
\end{example}
However, in the 70's something happend when V. Drinfel'd began studying difference operators on function fields over finite fields (where the endomorphism was a Frobenius morphism) in connection with what he called elliptic modules. From this point onward, the interest in difference operators has ever so slightly increased year-by-year. 

For instance, $q$-difference operators has been studied in arithmetic contexts since the mid 90's, for instance by the already mentioned Y. Andr\'e, L. diVizio, J. Sauloy, just to name a few. As we indicated above, the underlying reason for the paper \cite{HaLaSi} (and its antecedents \cite{LaSiQuasi, LaSi}) is the study of the algebraic structure of $q$-difference operators. A standing assumption in these papers is that the ground field is $\mathbb{C}$ or a field of characteristic zero, but this is really an unnecessary assumption. More or less every result in those papers are true in any characteristic (maybe in some cases one needs to assume that the characteristic is not $2$ or $3$). 

There is a close connection between $q$-difference operators and $q$-functions (e.g., $q$-hypergeometric functions), giving more evidence of the naturality of studying difference operators. Also, K. Kedlaya and many others (see for instance the recent book \cite{Kedlaya} by Kedlaya) study difference operators in the context of $p$-adic differential equations (Frobenius structures) and rigid cohomology. 

Therefore, it seems like a very good idea to have a ``Lie algebra-like'' structure in which to study these kind of operators.
\subsection{Plan of the paper}
The plan of the paper is as follows. As hom-Lie algebras are naturally algebras of twisted derivations, it is reasonable to begin the paper with a thorough study of these in the context of ``Global Arithmetic'', i.e., as sheaves of operators on schemes. This is done in Section 2. Section 3 introduces the main definition of this paper (besides twisted derivations), namely ``Global Equivariant hom-Lie algebras''. Then in Section 4 we give some easy results of base-change character. Assorted examples are given in Section 5. In section 6 section we introduce enveloping algebras of hom-Lie algebras which will be used in section 7 to construct non-commutative arithmetic schemes. Section 7 might be seen as a prelude to a fuller study of non-commutative scheme theory in arithmetic geometry. In this section we also construct a wealth of explicit examples and study their properties such as Auslander-regularity and representation theory. Finally we introduce the notion of zeta functions for polynomial identity algebras in order to study the arithmetic properties of the fibres of non-commutative arithmetic covers. 

This paper has a ``companion paper'', namely \cite{LarssonLfunctions}. In that paper is discussed more examples in the context of $L$-functions, Iwasawa theory and $p$-adic Hodge theory. 

\subsection*{Acknowledgments}A previous version of this paper was written when the I was employed by \emph{Forskar\-skolan i Matematik och Ber\"akningsvetenskap} (FMB) several years ago. I would like to thank FMB for funding and a great experience. I would in particular want to thank Johan Tysk for encouragement. I also want to thank Trond St\o len Gustavsen for very useful discussions. In addition, I would like to thank an anonymous referee for rejecting and giving very useful criticism and suggestions on a previous (and let's be honest, crappy) version of this paper.

\tableofcontents
\subsection*{Notations.} The following notations will be adhered to
throughout. 
\begin{itemize}
 \item[-] $k$ will denote a commutative, associative integral domain with unity. 
 \item[-] $\Com(k)$, $\Com(B)$ e.t.c, the category of, commutative,
 associative $k$-algberas ($B$-algebras, etc) with unity. Morphisms of
 $k$-algebras ($B$-algebras, e.t.c) are always unital, i.e.,
 $\phi(1)=1$. 
 \item[-] $A^\times$ is the set of units in $A$ (i.e., the set of
 invertible elements). 
 \item[-] $\Mod(A)$, the category of $A$-modules. 
 \item[-] $\End(A):=\End(A)$, the $k$-module of algebra morphisms on $A$. 
 \item[-] $\circlearrowleft_{a,b,c}(\,\cdot\,)$ will mean
cyclic addition of the expression in bracket.
\item[-] $\Sch$, denotes the category of schemes; $\Sch/S$ denotes the
  category of schemes over $S$ (i.e., the category of $S$-schemes). 
\item[-] We always assume that all schemes are Noetherian.
\item[-] When writing actions of group elements we will use the notations
  $\sigma(a)$ and $a^\sigma$, meaning the same thing: the action of $\sigma$ on $a$. 
\item[-] Sometimes we will use the notation $\underline{A}:=\Spec(A)$.
\end{itemize}
The condition that $k$ must be a domain can certainly be relaxed at several places in the presentation. But simplicity we keep it as a standing assumption. 

\section{Twisted derivations}
\subsection{Generalities}
Let $A\in\ob(\Com(k))$ and let $\sigma: A\to A$ be a $k$-linear map on
$A$. Then a
(classical) \emph{twisted derivation} on $A$ is a $k$-linear map
$\partial: A\to 
A$ satisfying $$\partial(ab)=\partial(a)b+\sigma(a)\partial(b).$$
We can generalize this as follows. Let $A$ and $\sigma$ be as above, and
$M\in\ob(\Mod(A))$. The action of $a\in A$ on $m\in M$ will be denoted
$a.m$. Then, a \emph{twisted derivation on $M$} is $k$-linear map
$\partial: M\to M$ such that 
\begin{equation}\label{eq:Leibniz}
\partial(a.m)=\partial_A(a).m+\sigma(a).\partial(m),
\end{equation} where, by
necessity, $\partial_A:A\to A$ is a twisted derivation on $A$ (in the
first sense). We call $\partial_A$ the \emph{restriction of $\partial$
  to $A$}. Finally, a \emph{twisted module derivation} is a
$k$-linear map $\partial : A\to M$ such that 
$$\partial(ab)=b.\partial(a)+\sigma(a).\partial(b),$$ for
$\sigma\in\End(A)$.  Normally we will not differentiate between left and right modules structures, but there are times when such a distinction would be necessary. 

We will sometimes refer to the above as \emph{$\sigma$-twisted
  (module) derivations} if we want to emphasize which $\sigma$ we 
  refer to. 

Let $\sigma\in\End(A)$ and denote by $A^{(\sigma)}:=A\otimes_{A,\sigma} A$, the extension of scalars along $\sigma$. This means that we consider $A$ as a left module over itself via $\sigma$, i.e., $a.b:=\sigma(a)b$. The right module structure is left unchanged. If $M$ is an $A$-module, we put 
$$M^{(\sigma)}:=A^{(\sigma)}\otimes_A M=A\otimes_{A,\sigma} M,$$i.e., $M$ is endowed with left module structure $a.m:=\sigma(a)m$, and once more, the right structure is unaffected.

We note that a $\sigma$-derivation $d_\sigma$ on $A$ is actually a derivation $d: A\to A^{(\sigma)}$ and conversely. Indeed,
$$d(ab)=d(a)b+a.d(b)=d(a)b+\sigma(a)d(b).$$In the same manner, a $\sigma$-derivation $d_\sigma: A\to M$ is a derivation $d: A\to M^{(\sigma)}$, and conversely. 

Therefore, there is a one-to-one correspondence between $\sigma$-derivations $d_\sigma:A\to M$ and derivations $d:A\to M^{(\sigma)}$. 

\subsubsection{Important note on names}I have been unable to be consistent with calling the operators by \emph{one} name. Therefore, '$\sigma$-twisted derivation', '$\sigma$-derivation' $\sigma$-difference operator' and '$\sigma$-differential operator' will all mean the same thing, unless the contrary is obvious. I hope that this will not cause the reader to much headache.

\subsubsection{Examples}
\begin{example}The ``universal'' (this designation will be amply demonstrated in what follows) example of a $\sigma$-derivation is the following. Let $A\in\ob(\Com(k))$ and
  $M\in\ob(\Mod(A))$. Suppose 
  $\sigma: M\to M$ is $\varsigma$-semilinear, i.e.,
  $\sigma(a.m)=\varsigma(a).\sigma(m)$, for $a\in A$ and $m\in M$, where
  $\varsigma\in\End(A)$. Then,
  for all $b\in A$, 
  $\partial:=b(\id-\sigma): M\to M$, is a $\varsigma$-twisted derivation on
  $M$. This follows from 
\begin{multline*}\partial(a.m)=b(\id-\sigma)(a.m)=
  b(a.m-\sigma(a.m))=b(a.m-\varsigma(a).\sigma(m))  
  \\ 
= b(a.m-\varsigma(a).m+\varsigma(a).m-\varsigma(a).\sigma(m))=
b((a-\varsigma(a)).m+\varsigma(a).(m-\sigma(m))\\
=(b(a-\varsigma(a))).m+\varsigma(a).b(m-\sigma(m))=
\partial_{A,\varsigma}(a).m+\varsigma(a).\partial(m).
\end{multline*}Notice that if $M=A$, we automatically get
  $\varsigma=\sigma$. On the other hand, given a $\sigma$-twisted derivation
  $\partial=(\id-\sigma): M\to M$, with $\sigma$ $\varsigma$-semilinear, there
  is a $\varsigma$-linear map defined by 
  $\sigma=\id-\partial$. More generally, for $a\in A^\times$,
  $\partial_a=a(\id-\sigma)$, defines a $\varsigma$-semilinear map
  $\sigma=\id-a^{-1}\partial_a$. Hence, there is a duality (in
  some sense) between twisted derivations and semilinear
  maps. Notice that this is especially true for fields. Indeed, in
  that case every twisted derivation is on the form $\partial_a$ by
  Theorem \ref{thm:classtwistder} below.
\end{example}

As was mentioned in the introduction, twisted derivations in the context of number theory is not an original
idea. A. Buium has studied arithmetic derivation-type operators since
the mid-90's in connection 
with $p$-adic abelian varieties and modular forms
\cite{BuiumP-adic,BuiumArDer,BuiumPi,BuiumMod}, where the 
present definition appears as ``$\pi$-difference operators'', being operators $\delta_\pi$ satisfying
$$\delta_\pi(x+y)=\delta_\pi(x)+\delta_\pi(y)\quad\text{and}\quad \delta_\pi(xy)=\delta_\pi(x)y+x\delta_\pi(y)-\pi\delta_\pi(x)\delta_\pi(y).$$(However, Buium's conditions involves $\delta_\pi(xy)=\delta_\pi(x)y+x\delta_\pi(y)+\pi\delta_\pi(x)\delta_\pi(y)$ instead of our $\delta_\pi(xy)=\delta_\pi(x)y+x\delta_\pi(y)-\pi\delta_\pi(x)\delta_\pi(y)$ but this is not conceptually different.) The operators in our presentation corresponding to Buium's $\delta_\pi$ are indeed the operators $\partial_\pi=\pi^{-1}(\id-\sigma)$ where $\sigma$ is the morphism $\sigma(x)= x-\pi\delta_\pi(x)$. The proof is trivial. 

The $\pi$-versions appear as a measure of defects of lifting
Frobenius from residue 
fields of discrete valuation rings, so these are fundamentally
different from our operators. Buium (cf. \cite{BuiumArDer}) also has a
variant of Theorem 4 in \cite{HaLaSi} (a 
version of which appears as Theorem \ref{thm:classtwistder} below) the
for local integral domains. 

Another instance where twisted derivations appear in number theory is
for instance as $q$-derivations (i.e., the operators from Example \ref{exam:qdiff}(b) above) and their differential calculus (e.g.,
$q$-differential equations and their dynamics, see for instance
\cite{Andre, diVizio, Sauloy}). 

\subsection{Modules of twisted derivations}\label{sec:moduletwisted}
\begin{prop}Let $M$ be an $A$-module. Then the $k$-modules of $\sigma$-twisted derivations,
\begin{align*}
\mathrm{Der}_{\sigma}(M)&:=\{\partial\in\End_k(M)\mid
  \partial(a.m)=\partial_A(a).m+\sigma(a).\partial(m)\},\quad\text{and}\\
  \mathrm{Der}_\sigma(A,M)&:=\{\partial\in\Hom_k(A,M)\mid
  \partial(ab)=\partial(a).b+\sigma(a).\partial(b)\}
\end{align*} are left 
  $A$-modules. Furthermore, if the characteristic is not 2, $\partial(1)=0$.
\end{prop}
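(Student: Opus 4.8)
The plan is to check that each defining twisted-Leibniz condition is stable under the natural $k$- and $A$-operations on maps, the one genuinely load-bearing ingredient being the commutativity of $A$. First I would record the $k$-module structure, which is essentially free: the condition cutting out $\mathrm{Der}_\sigma(M)$ inside $\End_k(M)$ (respectively $\mathrm{Der}_\sigma(A,M)$ inside $\Hom_k(A,M)$) is visibly $k$-linear in $\partial$, so if $\partial_1,\partial_2$ satisfy it then so does $\lambda_1\partial_1+\lambda_2\partial_2$ for $\lambda_i\in k$; here one uses that passing to the restriction is additive, $(\partial_1+\partial_2)_A=(\partial_1)_A+(\partial_2)_A$. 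Since $\End_k(M)$ and $\Hom_k(A,M)$ are already $k$-modules, this gives the $k$-module structure at once.

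For the left $A$-module structure, I would set, for $b\in A$, the map $b\partial$ by $(b\partial)(m):=b.\partial(m)$ (and likewise in the $A\to M$ case); this is again $k$-linear because $b.(-)$ and $\partial$ are. The content is to verify that $b\partial$ still obeys the twisted Leibniz rule. In the case of $\mathrm{Der}_\sigma(A,M)$ I would compute
\[
(b\partial)(a_1a_2)=b.\partial(a_1a_2)=b.\bigl(\partial(a_1).a_2+\sigma(a_1).\partial(a_2)\bigr)=(b\partial)(a_1).a_2+\bigl(b\sigma(a_1)\bigr).\partial(a_2),
\]
and this is the crux: because $A$ is commutative, $b\sigma(a_1)=\sigma(a_1)b$, so the last term equals $\sigma(a_1).\bigl(b.\partial(a_2)\bigr)=\sigma(a_1).(b\partial)(a_2)$, exactly as required. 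The computation for $\mathrm{Der}_\sigma(M)$ is identical in spirit, with the extra bookkeeping that one must identify the restriction of $b\partial$; the same commutation shows $(b\partial)_A=b\,\partial_A$, so $b\partial$ indeed lands back in $\mathrm{Der}_\sigma(M)$ with the expected restriction. The module axioms $(b_1b_2)\partial=b_1(b_2\partial)$, $(b_1+b_2)\partial=b_1\partial+b_2\partial$ and $1.\partial=\partial$ then follow immediately from the module axioms of $M$.

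Finally, for $\partial(1)=0$, I would substitute $a=b=1$ into the Leibniz rule. Since $\sigma\in\End(A)$ is a unital algebra morphism, $\sigma(1)=1$, and hence
\[
\partial(1)=\partial(1\cdot 1)=\partial(1).1+\sigma(1).\partial(1)=\partial(1)+\partial(1)=2\,\partial(1);
\]
the characteristic-$\neq 2$ hypothesis is then invoked to pass from $2\,\partial(1)=\partial(1)$ to $\partial(1)=0$. The main obstacle in the whole proposition is conceptual rather than computational: the entire $A$-module structure rests on commutativity of $A$ to move the scalar $b$ through the twisting factor $\sigma(a)$, and it is precisely this commutation—and the identification $(b\partial)_A=b\,\partial_A$ that it forces—that I would take care to state explicitly, since without it $b\partial$ would fail the twisted Leibniz rule altogether.
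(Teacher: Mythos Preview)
Your proposal is correct and follows essentially the same approach as the paper: define $(b.\partial)(m):=b.\partial(m)$, use commutativity of $A$ to move $b$ past $\sigma(a)$ in the twisted Leibniz rule, and obtain $\partial(1)=0$ from $\partial(1\cdot 1)=2\partial(1)$ using $\sigma(1)=1$. You are more explicit about the $k$-module structure and the identification $(b\partial)_A=b\,\partial_A$, but the core computation is identical to the paper's.
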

\begin{proof}The $A$-module structure is defined, in both cases, by
  $(a.\partial)(m):=a.\partial(m)$ (for $m$ either in $M$ or in
  $A$). Since $A$ is commutative, we have 
$$(b.\partial)(a.m)=b.\partial_A(a).m+b.\sigma(a).\partial(m)=
  b\partial_A(a).m+\sigma(a).(b.\partial)(m).$$ That $\partial(1)=0$
  follows easily, noting that $\sigma(1)=1$, by the usual
  calculation. 
\end{proof} 
Note that unlike the case of ordinary derivations, $\mathrm{Der}_\sigma(M)$
or $\mathrm{Der}_\sigma(A,M)$ are not Lie algebras. 

Let, as before, $A\in\ob(\Com(k))$ and let $\sigma\in\End(A)$. Denote
by $\Delta_\sigma$ a $\sigma$-twisted derivation on $M$ whose
restriction to $A$ is $\partial$, i.e.,
$\Delta_\sigma\in\mathrm{Der}_\sigma(M)$ and
$\partial\in\mathrm{Der}_\sigma(A)$. Assume that 
$\sigma(\Ann(\Delta_\sigma))\subseteq \Ann(\Delta_\sigma)$, where
$$\Ann(\Delta_\sigma):=\{a\in A\mid a\Delta_\sigma(m)=0,\quad \text{for
  all}\quad m\in M\},$$ and that
 \begin{equation}\label{eq:qsigmapartial}
 \partial\circ\sigma=q\cdot\sigma\circ\partial,\qquad\text{for some $q\in 
  A$.}
  \end{equation} Form the left $A$-module 
$$A\cdot\Delta_\sigma:=\{a\cdot\Delta_\sigma\mid a\in A\}.$$ Define
\begin{equation}\label{eq:prodtwist}
\llangle a\cdot\Delta_\sigma,b\cdot\Delta_\sigma\rrangle:=
\sigma(a)\cdot\Delta_\sigma(b\cdot\Delta_\sigma)-
\sigma(b)\cdot\Delta_\sigma(a\cdot\Delta_\sigma). 
\end{equation}This should be interpreted as 
$$
\llangle a\cdot\Delta_\sigma,b\cdot\Delta_\sigma\rrangle(m):=
\sigma(a)\cdot\Delta_\sigma(b\cdot\Delta_\sigma(m))-
\sigma(b)\cdot\Delta_\sigma(a\cdot\Delta_\sigma(m)),$$for $m\in M$. 
We now have the following fundamental theorem.
\begin{thm}\label{thm:twistprod}Under the above assumptions,
  equation (\ref{eq:prodtwist}) gives a 
  well-defined $k$-linear product on $A\cdot\Delta_\sigma$ such that 
\begin{itemize}
\item[(i)] $\llangle a\cdot\Delta_\sigma, b\cdot\Delta_\sigma\rrangle=
  (\sigma(a)\partial(b)-\sigma(b)\partial(a))
  \cdot\Delta_\sigma$;  
\item[(ii)] $\llangle a\cdot \Delta_\sigma, a\cdot\Delta_\sigma\rrangle=0$;
\item[(iii)] $\circlearrowleft_{a,b,c}\big(\llangle \sigma(a)\cdot
  \Delta_\sigma,\llangle
  b\cdot\Delta_\sigma,c\cdot\Delta_\sigma\rrangle\rrangle+ q\cdot\llangle
  a\cdot\Delta_\sigma,\llangle
  b\cdot\Delta_\sigma,c\cdot\Delta_\sigma\rrangle\rrangle\big)=0$,
\end{itemize} where, in (iii), $q$ is the same as in (\ref{eq:qsigmapartial}).
\end{thm}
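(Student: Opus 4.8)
The plan is to establish the three identities (i)--(iii) in sequence, with (i) doing essentially all the work: once the bracket of two generators is shown to collapse back into a single generator $(\sigma(a)\partial(b)-\sigma(b)\partial(a))\cdot\Delta_\sigma$, properties (ii) and (iii) become bookkeeping on the resulting scalar factors. First I would verify well-definedness and part (i) simultaneously. Expanding $\llangle a\cdot\Delta_\sigma, b\cdot\Delta_\sigma\rrangle(m)$ by definition gives $\sigma(a)\cdot\Delta_\sigma(b\cdot\Delta_\sigma(m)) - (a\leftrightarrow b)$. The key move is to push $\Delta_\sigma$ past the scalar $b$ using the twisted Leibniz rule \eqref{eq:Leibniz}, so that $\Delta_\sigma(b\cdot\Delta_\sigma(m)) = \partial(b)\cdot\Delta_\sigma(m) + \sigma(b)\cdot\Delta_\sigma(\Delta_\sigma(m))$. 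Substituting this and the symmetric expansion, the two second-order terms are $\sigma(a)\sigma(b)\cdot\Delta_\sigma^2(m)$ and $\sigma(b)\sigma(a)\cdot\Delta_\sigma^2(m)$; since $A$ is commutative these cancel exactly, leaving precisely $(\sigma(a)\partial(b)-\sigma(b)\partial(a))\cdot\Delta_\sigma(m)$. This proves (i), and since the right-hand side is manifestly an element of $A\cdot\Delta_\sigma$, it also shows the product is well-defined and closes on the module; $k$-bilinearity is inherited from that of $\sigma$, $\partial$, and scalar multiplication.

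Property (ii) is then immediate: setting $b=a$ in (i) gives $(\sigma(a)\partial(a)-\sigma(a)\partial(a))\cdot\Delta_\sigma=0$, so the product is alternating, hence skew-symmetric. The genuine content lies in (iii), the twisted Jacobi identity. Here I would apply (i) twice. The inner bracket $\llangle b\cdot\Delta_\sigma, c\cdot\Delta_\sigma\rrangle$ equals $(\sigma(b)\partial(c)-\sigma(c)\partial(b))\cdot\Delta_\sigma$; write this scalar as $u_{b,c}:=\sigma(b)\partial(c)-\sigma(c)\partial(b)$. Applying (i) again to $\llangle \sigma(a)\cdot\Delta_\sigma, u_{b,c}\cdot\Delta_\sigma\rrangle$ produces the scalar $\sigma(\sigma(a))\partial(u_{b,c}) - \sigma(u_{b,c})\partial(\sigma(a))$, and analogously $\llangle a\cdot\Delta_\sigma, u_{b,c}\cdot\Delta_\sigma\rrangle$ gives $\sigma(a)\partial(u_{b,c}) - \sigma(u_{b,c})\partial(a)$. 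The task is then to sum these two contributions, expand $\partial(u_{b,c})$ via the twisted Leibniz rule on $A$, and cyclically symmetrize over $(a,b,c)$.

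The hard part will be the cancellation in (iii), and this is exactly where the hypothesis \eqref{eq:qsigmapartial}, namely $\partial\circ\sigma = q\cdot\sigma\circ\partial$, becomes indispensable. Expanding $\partial(u_{b,c}) = \partial(\sigma(b)\partial(c)) - \partial(\sigma(c)\partial(b))$ by the twisted Leibniz rule introduces terms of the form $\partial(\sigma(b))\cdot\partial(c)$ and $\sigma(\sigma(b))\cdot\partial(\partial(c))$; the commutation relation lets me rewrite $\partial(\sigma(b))=q\,\sigma(\partial(b))$, converting the mixed first-order terms into a shape that matches, up to the factor $q$, the second group of terms in the cyclic sum. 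I expect that after full expansion each monomial in $\sigma$ and $\partial$ applied to $a,b,c$ appears twice with opposite signs once the cyclic sum $\circlearrowleft_{a,b,c}$ is taken, precisely because the combination in (iii) pairs the $\sigma(a)$-weighted bracket against the $q$-weighted $a$-bracket. I would organize the verification by grouping terms according to which of $a,b,c$ carries the outermost $\sigma$ and the order of operators applied, and check cancellation within each group; the commutativity of $A$ and the relation $\partial\sigma=q\sigma\partial$ are the only two facts needed to force every surviving term to vanish under cyclic summation. The assumption $\sigma(\Ann(\Delta_\sigma))\subseteq\Ann(\Delta_\sigma)$ is what guarantees throughout that the scalar identities I derive actually hold as operator identities on $A\cdot\Delta_\sigma$, i.e.\ that equal scalars give equal operators and no spurious annihilator ambiguity creeps in.
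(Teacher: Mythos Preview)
Your proposal is correct and is exactly the direct computational verification one would expect; the paper itself does not supply a proof but simply refers the reader to \cite{Larsson0}, so there is no in-paper argument to compare against. Your outline---cancel the second-order terms in (i) by commutativity of $A$, deduce (ii) trivially, and then grind out (iii) by expanding with the twisted Leibniz rule and invoking $\partial\sigma=q\,\sigma\partial$ to force cyclic cancellation, with the annihilator hypothesis ensuring well-definedness of the bracket on representatives---is the standard route and matches what the cited reference (and its antecedent \cite{HaLaSi}) carry out in full.
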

\begin{proof}See \cite{Larsson0}. 
\end{proof}
\begin{corollary}In the case $\Delta_\sigma\in\mathrm{Der}_\sigma(A,M)$, defining the algebra structure directly by property (i) in the theorem gives (ii) and (iii) on $A\cdot\Delta_\sigma$. 
\end{corollary}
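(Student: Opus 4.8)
The plan is to retain all the ambient data of Theorem~\ref{thm:twistprod} --- the algebra $A\in\ob(\Com(k))$, the endomorphism $\sigma\in\End(A)$, a $\sigma$-derivation $\partial\in\mathrm{Der}_\sigma(A)$ with $\partial\circ\sigma=q\cdot\sigma\circ\partial$, and the stability $\sigma(\Ann(\Delta_\sigma))\subseteq\Ann(\Delta_\sigma)$ --- but to replace the operator by a twisted module derivation $\Delta_\sigma\in\mathrm{Der}_\sigma(A,M)$, $\Delta_\sigma\colon A\to M$. Such a $\Delta_\sigma$ cannot be iterated, so the composite expression (\ref{eq:prodtwist}) is meaningless; instead one simply \emph{declares} the bracket by property (i),
$$\llangle a\cdot\Delta_\sigma,b\cdot\Delta_\sigma\rrangle:=\big(\sigma(a)\partial(b)-\sigma(b)\partial(a)\big)\cdot\Delta_\sigma,$$
and the task is to check that (ii) and (iii) persist. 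The guiding observation is that in Theorem~\ref{thm:twistprod} property (i) is established \emph{first}, and both (ii) and (iii) are then deduced from the reduced form alone; that deduction is a computation purely among the coefficients in $A$ and never again refers to composing $\Delta_\sigma$ with itself. Hence it should transport verbatim to the module case, where (i) is taken as the starting point.

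First I would settle well-definedness. Since $a\cdot\Delta_\sigma=a'\cdot\Delta_\sigma$ precisely when $a-a'\in\Ann(\Delta_\sigma)$, replacing $a$ by $a'$ alters the coefficient by $\sigma(a-a')\partial(b)-\sigma(b)\partial(a-a')$. The first summand multiplies $\Delta_\sigma$ to zero because $\sigma(a-a')\in\Ann(\Delta_\sigma)$ by the assumed $\sigma$-stability and $\Ann(\Delta_\sigma)$ is an ideal; the second requires $\partial(a-a')\cdot\Delta_\sigma=0$, i.e. $\partial$-stability of $\Ann(\Delta_\sigma)$. In the setting of the theorem this was automatic (it follows from applying $\Delta_\sigma$ to $c\cdot\Delta_\sigma(m)$ for $c\in\Ann(\Delta_\sigma)$), but in the module case it must be granted as an extra hypothesis --- it holds trivially, for instance, whenever $a\mapsto a\cdot\Delta_\sigma$ is injective, which is the situation in the principal examples. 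Granting this, the bracket is well defined and $k$-bilinear straight from the formula.

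Property (ii) is then immediate: setting $b=a$ makes the coefficient $\sigma(a)\partial(a)-\sigma(a)\partial(a)=0$, so $\llangle a\cdot\Delta_\sigma,a\cdot\Delta_\sigma\rrangle=0$, and skew-symmetry follows by bilinearity. For (iii) I would expand the inner bracket by the definition, $\llangle b\cdot\Delta_\sigma,c\cdot\Delta_\sigma\rrangle=d\cdot\Delta_\sigma$ with $d:=\sigma(b)\partial(c)-\sigma(c)\partial(b)\in A$, and then expand $\llangle\sigma(a)\cdot\Delta_\sigma,d\cdot\Delta_\sigma\rrangle$ and $q\cdot\llangle a\cdot\Delta_\sigma,d\cdot\Delta_\sigma\rrangle$ by the same rule. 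The resulting coefficients involve $\partial(d)$, which I unfold by the $\sigma$-Leibniz rule for $\partial$ on $A$ together with $\partial\circ\sigma=q\cdot\sigma\circ\partial$ (so that, e.g., $\partial\sigma(a)=q\,\sigma\partial(a)$). Performing the cyclic sum, the coefficient of $\Delta_\sigma$ collapses to zero by exactly the cancellation already carried out for Theorem~\ref{thm:twistprod} in \cite{Larsson0}, since that cancellation takes place entirely inside $A$.

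The main obstacle is thus not a new computation but the structural claim underlying it: one must verify that the proof of (iii) in \cite{Larsson0} genuinely factors through property (i), i.e. that after the reduction to the coefficient form every remaining manipulation uses only the Leibniz rule for $\partial\in\mathrm{Der}_\sigma(A)$, the multiplicativity of $\sigma$, and the intertwining $\partial\circ\sigma=q\cdot\sigma\circ\partial$. Once this is confirmed, nothing in the argument distinguishes $\Delta_\sigma\in\mathrm{Der}_\sigma(M)$ from $\Delta_\sigma\in\mathrm{Der}_\sigma(A,M)$, and the identities (ii) and (iii) follow. The only genuinely case-specific point is the $\partial$-stability of $\Ann(\Delta_\sigma)$ needed for well-definedness, which I would either assume outright or sidestep by restricting to faithful $\Delta_\sigma$.
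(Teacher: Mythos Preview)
Your argument is correct and is precisely the one intended: once the bracket is \emph{defined} by the coefficient formula (i), property (ii) is immediate and (iii) reduces to a cyclic cancellation carried out entirely in $A$ using the $\sigma$-Leibniz rule for $\partial$ and the relation $\partial\circ\sigma=q\cdot\sigma\circ\partial$. The paper itself does not spell this out --- its proof consists solely of a pointer to \cite{Larsson0} --- so you have supplied exactly the content behind that citation rather than taken a different route.

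One remark: your careful discussion of well-definedness and the possible need for $\partial$-stability of $\Ann(\Delta_\sigma)$ goes beyond what the paper (or the corollary as stated) addresses. This is a genuine point, but in the spirit of the corollary it is simplest to read $A\cdot\Delta_\sigma$ as the free cyclic $A$-module on the symbol $\Delta_\sigma$ (or to assume $\Ann(\Delta_\sigma)=0$, as in the principal examples), in which case the issue evaporates and your verification of (ii) and (iii) goes through without further hypotheses.
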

\begin{proof}See \cite{Larsson0}.
\end{proof}
We can extend $\sigma$ to an algebra morphism on $A\cdot\Delta_\sigma$ by defining $\sigma(a\cdot\Delta_\sigma):=\sigma(a)\cdot\Delta_\sigma$. 
\begin{remark}\label{rem:prodtwist}
Notice that for an ideal $I\subseteq A$ and
$\Delta_\sigma\in\mathrm{Der}_\sigma(A,I)$, the module $A\cdot\Delta_\sigma$
and the product from the theorem, makes perfect sense. In particular, if $I$ is $\sigma$-stable,
$\Delta_\sigma(I)\subseteq I$ so $\Delta_\sigma$ induces a twisted
derivation $\bar\Delta_\sigma$ on $A/I$ and we can form
$(A/I)\cdot\bar\Delta_\sigma$ with induced product.  
\end{remark}
\begin{lem}\label{thm:classtwistder} If there is
  an $x\in A$, where
  $A\in\ob(\Com(k))$, such that $$x-\varsigma(x)\in A^\times,\quad \id\neq\varsigma\in\End(A),$$ then any
  $\sigma$-twisted derivation $\Delta_\sigma$ on $M$, with 
  $M\in\ob(\Mod(A))$ and 
  $$\sigma\in\End(M),\quad\sigma(a.m)=\varsigma(a).\sigma(m),$$ is on the form 
$$\Delta_\sigma=(x-\varsigma(x))^{-1}\partial_A(x)(\id-\sigma),$$where
  $\partial_A$ is the restriction of $\Delta_\sigma$ to $A$. 
  If $M$ is torsion-free over $A$, then $A\cdot
  \Delta_\sigma=\mathrm{Der}_\sigma(M)$ is free of rank one. 
\end{lem}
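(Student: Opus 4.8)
The plan is to show that the stated formula for $\Delta_\sigma$ holds by evaluating the twisted Leibniz rule on a cleverly chosen element, and then to deduce the rank-one statement. First I would apply the defining twisted Leibniz rule \eqref{eq:Leibniz} to the product $x.m$, where $x\in A$ is the distinguished element with $x-\varsigma(x)\in A^\times$ and $m\in M$ is arbitrary. This gives
\begin{equation*}
\Delta_\sigma(x.m)=\partial_A(x).m+\varsigma(x).\Delta_\sigma(m),
\end{equation*}
where I have used that $\sigma$ is $\varsigma$-semilinear, so that the second term carries $\varsigma(x)$ rather than $x$. The key observation is that the left-hand side can be rewritten using the module structure, but the real leverage comes from comparing this identity with what $\Delta_\sigma$ does on $m$ directly; the goal is to isolate $\Delta_\sigma(m)$ in terms of $\partial_A(x)$ and $(\id-\sigma)(m)$.

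The main step I would carry out is to manipulate the identity so that the invertible element $x-\varsigma(x)$ appears as a coefficient of $\Delta_\sigma(m)$. Writing $\Delta_\sigma(x.m)=x.\Delta_\sigma(m)-(x-\varsigma(x)).\Delta_\sigma(m)+\varsigma(x).\Delta_\sigma(m)$ is the wrong bookkeeping; instead I expect the cleanest route is to note that, since $\sigma(x.m)=\varsigma(x).\sigma(m)$, one has
\begin{equation*}
(\id-\sigma)(x.m)=x.m-\varsigma(x).\sigma(m)=(x-\varsigma(x)).m+\varsigma(x).(\id-\sigma)(m).
\end{equation*}
Comparing the Leibniz expansion of $\Delta_\sigma(x.m)$ with the general form $\Delta_\sigma=c(\id-\sigma)$ that one is trying to establish, and using that $\Delta_\sigma(x.m)$ is already determined by $\partial_A(x)$ and $\Delta_\sigma(m)$, I would solve for the unique coefficient. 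The upshot is that $(x-\varsigma(x)).\Delta_\sigma(m)=\partial_A(x).(\id-\sigma)(m)$ for all $m$, whence multiplying by the inverse $(x-\varsigma(x))^{-1}$ yields $\Delta_\sigma=(x-\varsigma(x))^{-1}\partial_A(x)(\id-\sigma)$, as claimed.

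For the final sentence, once every $\sigma$-twisted derivation is shown to be a left-$A$-multiple of the single operator $\partial_A(x)(\id-\sigma)$ (rescaled by the fixed unit), it follows immediately that $\mathrm{Der}_\sigma(M)=A\cdot\Delta_\sigma$ is a cyclic left $A$-module. To see it is \emph{free} of rank one when $M$ is torsion-free, I would show that the annihilator of $\Delta_\sigma$ in $A$ vanishes: if $a.\Delta_\sigma=0$, then $a.\Delta_\sigma(m)=0$ for all $m$, and by torsion-freeness either $a=0$ or $\Delta_\sigma(m)=0$ for all $m$; the latter is excluded because $\Delta_\sigma(x)=\partial_A(x)$ and $\partial_A$ is a nonzero derivation on $A$ (nonzero since $x-\varsigma(x)\neq 0$ forces $\id\neq\sigma$). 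Hence $A\to A\cdot\Delta_\sigma$, $a\mapsto a.\Delta_\sigma$, is an isomorphism of $A$-modules.

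The step I expect to be the main obstacle is the algebraic isolation of $\Delta_\sigma(m)$ in the first part: one must be careful that $\Delta_\sigma$ is only $k$-linear and \emph{not} $A$-linear, so the coefficient $(x-\varsigma(x))^{-1}\partial_A(x)$ must be verified to be independent of the chosen $m$, which is exactly what the identity $(x-\varsigma(x)).\Delta_\sigma(m)=\partial_A(x).(\id-\sigma)(m)$ guarantees. The subtlety is that $\partial_A(x)$ is a fixed element of $A$ determined by the restriction $\partial_A$, and establishing that the same $x$ works uniformly for all $m$ — rather than having an $m$-dependent formula — is the crux of the argument.
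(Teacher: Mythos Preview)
Your main step has a genuine gap. From the single Leibniz identity
\[
\Delta_\sigma(x.m)=\partial_A(x).m+\varsigma(x).\Delta_\sigma(m)
\]
alone you cannot extract $(x-\varsigma(x)).\Delta_\sigma(m)=\partial_A(x).(\id-\sigma)(m)$; the sentence you label ``the upshot'' is an assertion, not a derivation. What you actually do just before it is \emph{assume} $\Delta_\sigma=c(\id-\sigma)$ and solve for $c$ --- this only tells you what the coefficient would have to be \emph{if} the form holds, it does not prove the form. The Leibniz rule applied to $x.m$ is one relation between the two unknowns $\Delta_\sigma(x.m)$ and $\Delta_\sigma(m)$, and nothing you have written eliminates the first. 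Your auxiliary computation of $(\id-\sigma)(x.m)$ is a statement about $\id-\sigma$, not about $\Delta_\sigma$, so it does not help without already knowing the two are proportional.

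The paper supplies the missing equation by exploiting commutativity $m.x=x.m$ and expanding $\Delta_\sigma(m.x)$ with the Leibniz rule read in the \emph{other} order, namely $\Delta_\sigma(m.x)=\Delta_\sigma(m).x+\sigma(m).\partial_A(x)$. Subtracting the two expansions of the same element yields
\[
0=\Delta_\sigma(m.x-x.m)=\Delta_\sigma(m)(x-\varsigma(x))+(\sigma(m)-m)\partial_A(x),
\]
and then one divides by the unit $x-\varsigma(x)$. So the idea you are missing is precisely to use the symmetric Leibniz expansion to get a second, independent relation.

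A smaller issue in your rank-one argument: you write ``$\Delta_\sigma(x)=\partial_A(x)$'', but $\Delta_\sigma$ is a map $M\to M$ and $x\in A$, so this does not type-check; and $\varsigma\neq\id$ does not force $\partial_A\neq 0$ (the zero map is always a $\varsigma$-derivation). The paper simply observes that torsion-freeness gives $a\Delta_\sigma(m)=0\Rightarrow a=0$, tacitly for nonzero $\Delta_\sigma$; you should argue likewise rather than trying to deduce non-vanishing from the stated hypotheses.
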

\begin{proof}Let $m\in M$ be arbitrary. Then the first statement follows from 
$$0=\Delta_\sigma(m.x-x.m)=\Delta_\sigma(m)(x-\varsigma(x))+
  (\sigma(m)-m)\partial_A(x).$$  By assumption, $x-\varsigma(x)$ is
  invertible, so 
$$\Delta_\sigma(m)=(x-\varsigma(x))^{-1}\partial_A(x)(\id-\sigma)(m),
  \quad \text{for all}\quad m\in M.$$ 
Clearly, when $M$ is torsion-free over $A$, $a\Delta_\sigma(m)=0\Rightarrow a=0$,
so $\mathrm{Der}_\sigma(M)$ is free of rank one. 
\end{proof}
Hence, ``up to a localization'' (at $x-\varsigma(x)$), every
$\sigma$-twisted derivation on $M\in\ob(\Mod(A))$ is on the form given
in the lemma. This means that if there is an $x\in A$ such that $x-\varsigma(x)$ is invertible, then giving a twisted derivation $\Delta_\sigma$ on $M$ amounts to deciding what the restriction of $\Delta_\sigma$ to $A$ is on $x$. 

As an immediate consequence of the lemma we have: 
\begin{prop}\label{prop:SigmaDerLocal}Let $A$ be a $k$-algebra and $\id\neq\varsigma\in\End(A)$, $\sigma\in\End_k(M)$ such that $\sigma(a.m)=\varsigma(a).\sigma(m)$. Suppose that for each $\mathfrak{p}\in\Spec(A)$ there is an $x\in A$ such that $x-\varsigma(x)\notin \mathfrak{p}$. Then $\mathrm{Der}_\sigma(M)$ is locally free of rank one over $A$.
\end{prop}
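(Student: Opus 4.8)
The plan is to verify freeness of rank one on a Zariski-open cover of $\Spec(A)$, the cover being furnished directly by the hypothesis. For each $\mathfrak{p}\in\Spec(A)$ I choose $x_\mathfrak{p}\in A$ with $s_\mathfrak{p}:=x_\mathfrak{p}-\varsigma(x_\mathfrak{p})\notin\mathfrak{p}$; then $\mathfrak{p}\in D(s_\mathfrak{p})$, and as $\mathfrak{p}$ ranges over $\Spec(A)$ these distinguished opens cover $\Spec(A)$. It therefore suffices to show that on each $D(s)$, with $s=x-\varsigma(x)$, the localized module $\mathrm{Der}_\sigma(M)_s$ is free of rank one over $A_s$. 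The candidate generator is $\id-\sigma$: by the universal example it is a $\varsigma$-twisted derivation on $M$, so $\id-\sigma\in\mathrm{Der}_\sigma(M)$, and I let $\rho\colon A\to\mathrm{Der}_\sigma(M)$, $a\mapsto a\cdot(\id-\sigma)$, be the resulting $A$-linear map.

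First I would record the key identity, which is exactly the computation underlying Lemma~\ref{thm:classtwistder}. For any $\Delta_\sigma\in\mathrm{Der}_\sigma(M)$ with restriction $\partial_A$, and any $x\in A$, evaluating $\Delta_\sigma$ on $x.m$ and using the $\varsigma$-semilinearity of $\sigma$ gives, for all $m\in M$,
\begin{equation*}
(x-\varsigma(x))\,\Delta_\sigma(m)=\partial_A(x)\,(\id-\sigma)(m),
\end{equation*}
that is, $(x-\varsigma(x))\cdot\Delta_\sigma=\rho(\partial_A(x))$ as an identity in the $A$-module $\mathrm{Der}_\sigma(M)$. This identity is the whole point, and crucially it is a relation with coefficients in $A$, so it survives localization without any need to extend $\varsigma$ or $\sigma$ to $A_s$ or $M_s$. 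Taking $x$ with $s=x-\varsigma(x)$ and inverting $s$, the element $s$ becomes a unit in $A_s$ and the identity reads $\Delta_\sigma=\rho\big(s^{-1}\partial_A(x)\big)$ in $\mathrm{Der}_\sigma(M)_s$. Hence $\rho_s\colon A_s\to\mathrm{Der}_\sigma(M)_s$ is surjective, i.e. the localized module is cyclic, generated by the image of $\id-\sigma$.

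It remains to prove that $\rho_s$ is injective, which is where rank-one freeness (as opposed to mere local cyclicity) is genuinely used and which I expect to be the main obstacle. Suppose $\rho_s(a)=0$; then $s^n a\,(m-\sigma(m))=0$ in $M$ for all $m$ and some $n\ge 0$. Here I would invoke torsion-freeness of $M$ over $A$, the hypothesis under which Lemma~\ref{thm:classtwistder} already asserts freeness of rank one. Since $\varsigma\neq\id$ and $M$ is torsion-free and nonzero, one first checks $\sigma\neq\id$: were $\sigma=\id$, then $(a-\varsigma(a)).m=0$ for all $a,m$ would, by torsion-freeness, force $a=\varsigma(a)$ for all $a$, contradicting $\varsigma\neq\id$. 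So there is $m_0$ with $m_0-\sigma(m_0)\neq 0$, and torsion-freeness then yields $s^n a=0$ in $A$, whence $a=0$ in $A_s$. Thus $\rho_s$ is an isomorphism and $\mathrm{Der}_\sigma(M)_s\cong A_s$ is free of rank one. Since the $D(s)$ cover $\Spec(A)$, the module $\mathrm{Der}_\sigma(M)$ is locally free of rank one, as claimed. The only delicate inputs are this torsion-free hypothesis and the observation that the displayed identity is $A$-linear, which is precisely what lets me bypass the (in general impossible) extension of $\varsigma$ to the localizations and reduce everything to the lemma already proved.
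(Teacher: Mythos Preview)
Your argument is correct and follows the same route as the paper, which simply localizes at each prime $\mathfrak{p}$ (so that $x-\varsigma(x)$ becomes a unit) and invokes Lemma~\ref{thm:classtwistder}. You are in fact more careful than the paper on two points it elides: you localize the $A$-module $\mathrm{Der}_\sigma(M)$ directly via the $A$-linear identity rather than attempting to extend $\varsigma$ and $\sigma$ to the localizations, and you make explicit the torsion-freeness hypothesis on $M$ (present in the lemma but not restated in the proposition) that is needed for injectivity of $\rho_s$ and hence for rank one rather than mere local cyclicity.
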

\begin{proof}
For any $\mathfrak{p}\in\Spec(A)$, take $x\in A$ such that $x-\varsigma(x)\notin\mathfrak{p}$. In the localization $A_\mathfrak{p}$ an element $x-\varsigma(x)$ is a unit so we can apply the lemma.
\end{proof}

In case $M=A$ is a unique factorization domain (UFD), it is possible (see \cite{HaLaSi}) to
prove a stronger version which does not assume the existence of $x\in
A$ such that $x-\varsigma(x)\in A^\times$:
\begin{thm}\label{thm:UFD} If $A$ is a UFD, and $\sigma\in\End(A)$, then
$$\Delta_{\sigma}:=\frac{\id-\sigma}{g}$$
generates $\mathrm{Der}_\sigma(A)$ as a left $A$-module, where
$g:=\gcd((\id-\sigma)(A))$. 
\end{thm}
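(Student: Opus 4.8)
The plan is to pin down an arbitrary $\sigma$-derivation $D\in\mathrm{Der}_\sigma(A)$ by extracting a single ``slope'' scalar in the fraction field $K:=\Frac(A)$, and then to use the UFD structure to show that this scalar is an $A$-multiple of $g^{-1}$. Throughout I assume $\sigma\neq\id$ (otherwise $(\id-\sigma)(A)=\{0\}$, $g=0$, and the statement is ill-posed); write $\delta:=\id-\sigma$, which is a $\sigma$-derivation by the universal example.

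First I would record that $g:=\gcd(\delta(A))$ is genuinely a well-defined element of $A$ up to units: fixing any nonzero $d_0=a_0-\sigma(a_0)$ (which exists since $\sigma\neq\id$), every prime of $A$ entering $g$ must divide $d_0$, so only finitely many primes occur with positive exponent and the infinite gcd makes sense. By definition $g\mid\delta(a)$ for all $a$, so $\Delta_\sigma=g^{-1}\delta$ really maps $A$ into $A$; a one-line Leibniz check (as in the universal example, with the formal scalar $g^{-1}$ pulled out) confirms $\Delta_\sigma\in\mathrm{Der}_\sigma(A)$.

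The crux is the following \emph{key identity}: for every $D\in\mathrm{Der}_\sigma(A)$ and all $a,b\in A$,
\begin{equation*}
D(a)\,\delta(b)=D(b)\,\delta(a).
\end{equation*}
This I would obtain purely from commutativity, by expanding $D(ab)=D(ba)$ via the twisted Leibniz rule: $D(a)b+\sigma(a)D(b)=D(b)a+\sigma(b)D(a)$ rearranges to $D(a)(b-\sigma(b))=D(b)(a-\sigma(a))$. The identity says the ratio $r:=D(a)/\delta(a)\in K$ is independent of the chosen $a$ (whenever $\delta(a)\neq0$); moreover if $\delta(a)=0$ then the identity forces $D(a)\delta(b)=0$ for all $b$, and since $\delta\neq0$ we get $D(a)=0$. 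Hence $D=r\,\delta$ as maps $A\to K$, for one fixed $r\in K$.

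It remains to prove $rg\in A$, which is where the UFD hypothesis does the real work and which I expect to be the main obstacle to state cleanly. Writing $r=p/q$ in lowest terms with $\gcd(p,q)=1$, the fact that $D(a)=r\,\delta(a)\in A$ for every $a$ means $q\mid p\,\delta(a)$, whence $q\mid\delta(a)$ by Euclid's lemma in the UFD. Thus $q$ is a common divisor of all of $\delta(A)$, so $q\mid g$, giving $c:=rg=pg/q\in A$. Finally $D(a)=r\delta(a)=c\cdot g^{-1}\delta(a)=c\cdot\Delta_\sigma(a)$ for all $a$, i.e.\ $D=c\cdot\Delta_\sigma$, which proves that $\Delta_\sigma$ generates $\mathrm{Der}_\sigma(A)$ as a left $A$-module.
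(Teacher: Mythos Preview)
Your argument is correct and is precisely the standard proof: the paper does not reprove this theorem but simply cites \cite{HaLaSi}, and what you have written is essentially the argument given there (the key commutativity identity $D(a)\delta(b)=D(b)\delta(a)$, followed by the UFD reduction on $r=p/q$). There is nothing to add.
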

Notice that the theorem and the proposition say slightly different things.
\begin{example}When $A=K/k$ is a field (extension) the above theorem
  implies that every $\sigma$-twisted derivation is on the form given
  in the statement. 
\end{example}
\subsection{Global twisted derivations}\label{sec:globaltwisted}
We keep the notations from above.

The definition of twisted derivations can be globalized. Let
$\Spec(A)$ be an affine scheme and let $E$ be an $A$-module. The $A$-module
$\mathrm{Der}_\sigma(E)$ can be ``sheafified'', i.e., turned into a sheaf $\widetilde{\mathrm{Der}_\sigma(\widetilde{E})}$ on $\Spec(A)$
(see \cite[II.5]{Hartshorne}). Let $X\xrightarrow{f} S$ be an $S$-scheme and take $\sigma\in\mathcal{E}nd_{\mathscr{O}_S}(\mathscr{O}_X)$. We first define a sheaf $\SDer_{\sigma,\mathscr{O}_S}(\mathscr{O}_X)$, the sheaf of $\mathscr{O}_S$-linear $\sigma$-twisted derivations on $\mathscr{O}_X$, as follows. We define $\SDer_{\sigma,\mathscr{O}_S}(\mathscr{O}_X)$ to be the sub-$\mathscr{O}_X$-module of $\SEnd_{\mathscr{O}_S}(\mathscr{O}_X)$ generated \emph{as a left $\mathscr{O}_X$-module} by $\mathscr{O}_X$ and the $\mathscr{O}_S$-linear operators $\partial$ in $\SEnd_{\mathscr{O}_S}(\mathscr{O}_X)$ satisfying, on each $U\subseteq X$, 
\begin{equation}\label{eq:sheafder}
\partial (xy)=\partial(x)y+\sigma(x)\partial(y),\quad x,y\in \mathscr{O}_X(U). 
\end{equation} Since, for $U=\Spec(B)\subseteq X$, $$\SDer_{\sigma,\mathscr{O}_S}(\mathscr{O}_X)\vert_U=\big(\mathrm{Der}_{\sigma,\mathscr{O}_{\underline{A}}}(\mathscr{O}_{\underline{B}})\big)^{\!\!\!\!\!\widetilde{\qquad}},$$ we see, by using \cite[Prop. II.5.4]{Hartshorne}), that $\SDer_{\sigma,\mathscr{O}_S}(\mathscr{O}_X)$ is quasi-coherent. (An alternative argument simply notes that $\SDer_{\sigma,\mathscr{O}_S}(\mathscr{O}_X)$ is a subsheaf of a quasi-coherent sheaf, and thus itself quasi-coherent.) 

 Now, suppose that $\mathscr{E}$ is a quasi-coherent sheaf of $\mathscr{O}_X$-modules. The same reasoning as above gives that $\SDer_{\sigma,\mathscr{O}_S}(\mathscr{E})$ is quasi-coherent. Notice, however, that in the definition of $\SDer_{\sigma,\mathscr{O}_S}(\mathscr{E})$, instead of (\ref{eq:sheafder}), we need to impose
 $$\partial(x.e)=\partial\vert_{\mathscr{O}_X}(x). e+\sigma(x).\partial(e),\quad x\in \mathscr{O}_X(U),\, e\in \mathscr{E}(U).$$
 
Let $G$ be a subgroup of $\mathcal{E}nd(\mathscr{O}_X)$. The set of all $\varsigma\in G$ such that $\varsigma$ reduces to the identity on the residue field $k(\mathfrak{p})$ is called the \emph{inertia group to $\mathfrak{p}$}, $\mathrm{Inert(\mathfrak{p})}$; in addition we let $\mathrm{Inert}_\varsigma(X)$ denote the set of points in $X$ where $\varsigma\in\mathrm{Inert}(\mathfrak{p})$. We also put 
$$\mathrm{Inert}_G(X):=\bigcup_{\varsigma\in G}\mathrm{Inert}_\varsigma(X),$$ the \emph{inertia locus} on $(X,G)$. We now have the following theorem.
\begin{thm}\label{thm:invsheaf}
Let $f:X\to S$ be an integral $S$-scheme. Suppose $G$ is a finite group acting on $X$ linearly over $S$. Assume that $\mathrm{Inert}_G(X)$ is a closed subscheme of $X$ and let $\mathscr{E}$ be an $G$-equivariant, torsion-free, $\Oc_X$-module. Then
\begin{itemize}
\item[(a)] $\SDer_\varsigma(\mathscr{E})$ is invertible on the complement $Y:=X\setminus \mathrm{Inert}_G(X)$, for all $\varsigma\in G$. Hence the image of the association 
$$G\to \mathrm{Pic}(Y), \quad \varsigma\mapsto \SDer_\varsigma(\mathscr{E})$$ together with the identity generates a subgroup of $\mathrm{Pic}(Y)$.
\item[(b)] if $\mathrm{Inert}_G(X)$ is regular then $\SDer_\varsigma(\mathscr{E})$ can be extended to an invertible module on all of $X$, for all $\varsigma\in G$. Hence in this case, the association becomes
$$G\to \mathrm{Pic}(X), \quad \varsigma\mapsto \SDer_\varsigma(\mathscr{E})$$ and so generates a subgroup of $\mathrm{Pic}(X)$ together with the identity.
\end{itemize} In addition, this association composes to $\mathrm{CDiv}(X)$ (resp. $\mathrm{CDiv}(X)$), generating a subgroup of effective Cartier divisors. 
\end{thm}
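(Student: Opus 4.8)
The plan is to prove (a) by localizing and invoking the affine statement Proposition \ref{prop:SigmaDerLocal}, since invertibility (local freeness of rank one) is a local property on $Y$. First I would cover $Y$ by affine opens $U=\Spec(B)\subseteq Y$ with $B=\mathscr{O}_X(U)$ and $M:=\mathscr{E}(U)$; by the quasi-coherence established above, $\SDer_\varsigma(\mathscr{E})|_U$ is the sheafification of $\mathrm{Der}_\varsigma(M)$, so it suffices to show that $\mathrm{Der}_\varsigma(M)$ is locally free of rank one over $B$. The $G$-equivariant structure on $\mathscr{E}$ supplies, for each $\varsigma$, a $\varsigma$-semilinear endomorphism $\sigma$ of $M$, i.e.\ $\sigma(a.m)=\varsigma(a).\sigma(m)$, and torsion-freeness of $\mathscr{E}$ descends to $M$. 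The crux is the observation that the hypothesis of Proposition \ref{prop:SigmaDerLocal} --- that for each $\mathfrak{p}\in\Spec(B)$ there is an $x$ with $x-\varsigma(x)\notin\mathfrak{p}$ --- is \emph{precisely} the statement that $\varsigma$ does not reduce to the identity on $k(\mathfrak{p})$, i.e.\ that $\mathfrak{p}\notin\mathrm{Inert}_\varsigma(X)$. Since $U\subseteq Y=X\setminus\mathrm{Inert}_G(X)$ and $\mathrm{Inert}_\varsigma(X)\subseteq\mathrm{Inert}_G(X)$, every point of $U$ avoids $\mathrm{Inert}_\varsigma(X)$, so the hypothesis holds at every $\mathfrak{p}$, and Proposition \ref{prop:SigmaDerLocal} gives local freeness of rank one. (One reads the inertia locus through the nontrivial elements of $G$, so that $Y$ is nonempty and in fact contains the generic point $\eta$ whenever $\varsigma$ acts nontrivially on $k(\eta)$.)

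This establishes $\SDer_\varsigma(\mathscr{E})|_Y\in\mathrm{Pic}(Y)$ for every $\varsigma$. The subgroup assertion is then formal: the association $\varsigma\mapsto\SDer_\varsigma(\mathscr{E})$ need not be a homomorphism --- the hom-Lie product is taken ``one $\sigma$ at a time'', so there is no reason for $\SDer_{\varsigma_1\varsigma_2}(\mathscr{E})$ to be the tensor product of $\SDer_{\varsigma_1}(\mathscr{E})$ and $\SDer_{\varsigma_2}(\mathscr{E})$ --- but any subset of the abelian group $\mathrm{Pic}(Y)$ together with the identity class $[\mathscr{O}_Y]$ generates a subgroup, which is exactly what is claimed.

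For (b), the strategy is to extend the invertible sheaf $\mathscr{L}:=\SDer_\varsigma(\mathscr{E})|_Y$ across the closed complement $Z:=\mathrm{Inert}_G(X)$. Since $X$ is integral and $\mathscr{E}$ is torsion-free, $\SDer_\varsigma(\mathscr{E})$ is a rank-one torsion-free coherent sheaf on $X$ agreeing with $\mathscr{L}$ on $Y$; I would replace it by its reflexive hull, equivalently $j_*\mathscr{L}$ for the open immersion $j:Y\hookrightarrow X$, which is reflexive of rank one and restricts to $\mathscr{L}$ on $Y$. The remaining point is to promote this reflexive extension to an invertible one along $Z$, and this is exactly where regularity of $Z$ is invoked: it should guarantee that $Z$ is locally cut out by a regular sequence, hence that a rank-one reflexive sheaf whose only possible non-locally-free locus lies in $Z$ is in fact locally free there. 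I expect this to be the main obstacle of the theorem, since one must verify that regularity of the subscheme $Z$ (as opposed to regularity of the ambient $X$) suffices to force the reflexive hull to be invertible; this is the step that genuinely uses the hypothesis, and it requires care with the depth and codimension of $Z$ in $X$.

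Finally, the Cartier-divisor refinement follows by choosing, for each $\varsigma$, the canonical rational trivialization of $\SDer_\varsigma(\mathscr{E})$ given by its generic generator $\Delta_\varsigma$ from Lemma \ref{thm:classtwistder}, viewed as a rational section of the invertible sheaf. This rational section lifts the class in $\mathrm{Pic}(X)$ to an honest Cartier divisor $\mathrm{div}(\Delta_\varsigma)\in\mathrm{CDiv}(X)$, and effectivity is read off from the fact that near $Z$ the module $\SDer_\varsigma(\mathscr{E})$ is generated inside $\SEnd_{\mathscr{O}_S}(\mathscr{E})$ by genuine (non-rational) twisted derivations, so the generator has zeros rather than poles along $Z$. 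Taking the subgroup generated by these divisors together with $0$ yields the asserted subgroup of effective Cartier divisors.
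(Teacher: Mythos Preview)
Your argument for (a) is essentially the paper's, though the paper makes the case split explicit: either $\varsigma(\mathfrak{p})\not\subseteq\mathfrak{p}$ (then take $x\in\mathfrak{p}$ with $\varsigma(x)\notin\mathfrak{p}$), or $\varsigma$ lies in the decomposition group but not the inertia group (then some $t$ has $(\id-\varsigma)(t)\notin\mathfrak{p}$, else $\varsigma$ would reduce to the identity on $k(\mathfrak{p})$). Either way one lands in Proposition~\ref{prop:SigmaDerLocal}, exactly as you do.

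For (b) your route diverges from the paper's and runs into the very obstacle you flag. The paper does \emph{not} extend anything across $Z$: it simply observes that at a point of the (regular) inertia locus the local ring is regular, hence a UFD, and then applies Theorem~\ref{thm:UFD} directly to conclude that $\mathrm{Der}_\varsigma$ is already free of rank one there. Thus $\SDer_\varsigma(\mathscr{E})$ is invertible at every point of $X$, with no reflexive hull or $j_*$ needed. Your proposed mechanism---promoting a rank-one reflexive sheaf to an invertible one using only regularity of the \emph{subscheme} $Z$---is not a standard fact (the usual statement requires the ambient scheme to be locally factorial), and you correctly identify it as the sticking point; the paper's approach via Theorem~\ref{thm:UFD} bypasses it entirely.

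For the Cartier-divisor statement the paper simply invokes the standard identification $\mathrm{Pic}\simeq\mathrm{CDiv}$ on integral schemes and the fact that a sheaf locally generated by one element yields an effective divisor (Hartshorne II.6.15 and II.6.17.1), rather than exhibiting a specific rational section.
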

\begin{proof}Suppose $\{U_i\}_i$ is an affine cover of $X$ and let $\mathscr{F}$ be a quasi-coherent sheaf on $X$.  If $\mathscr{F}\vert_{U_i}$ is locally free for each $i$, then $\mathscr{F}$ is locally free. By the paragraph preceding the theorem, we know that $\SDer_\varsigma(\mathscr{E})$ is a quasi-coherent sheaf on $X$, so this applies in particular to $\SDer_\varsigma(\mathscr{E})$.  

Fix $\varsigma\in G$ and take arbitrary $\mathfrak{p}\in Y$. We restrict $f$ to $Y$. For simplicity we still denote this restriction by $f$. Then $f^{-1}(f(\mathfrak{p}))\subseteq U$ for some affine $G$-invariant  $U:=\Spec(A)\subseteq Y$. Notice that $G$ preserves the fibres above $S$ as it acts over $S$; this means that $Y$ is also $G$-invariant. Then there is an $x\in A$ such that $x-\varsigma(x)\notin\mathfrak{p}$. Indeed, either (1) we have $\varsigma(\mathfrak{p})\not\subseteq \mathfrak{p}$, or (2) we have $\varsigma(\mathfrak{p})\subseteq \mathfrak{p}$ (i.e., $\varsigma$ is in the \emph{decomposition group} at $\mathfrak{p}$). In case (1) we can take $x\in\mathfrak{p}$ such that $\varsigma(x)\notin \mathfrak{p}$; then $x-\varsigma(x)\notin\mathfrak{p}$. For case (2), assume that there is no $t\in A$ such that $(\id-\varsigma)(t)\notin \mathfrak{p}$, i.e., for all $t\in A$, $(\id-\varsigma)(t)\in \mathfrak{p}$. Then modulo $\mathfrak{p}$, $\varsigma$ reduces to the identity, which is a contradiction since we are on $Y$, and $Y$ has no points of non-trivial inertia. Therefore, for every $\mathfrak{p}\in Y$ we can choose an open affine $U=\Spec(A)$ such that there is an $x\in A$ with $x-\varsigma(x)\notin\mathfrak{p}$. 

We can now apply Proposition \ref{prop:SigmaDerLocal}, showing that $\SDer_\varsigma(\mathscr{E})$ is an invertible sheaf on $Y$. Hence we have an association $G\to \mathrm{Pic}(X)$ given by $\varsigma\mapsto \SDer_\varsigma(\mathscr{E}\vert_Y)$. For the last part, since $X$ is integral, \cite[Prop. II.6.15]{Hartshorne} states that $\mathrm{CDiv}(Y)\simeq \mathrm{Pic}(Y)$, and \cite[Rem. II.6.17.1]{Hartshorne} shows that $\SDer_\varsigma(\mathscr{E}\vert_Y)$ actually gives an effective Cartier divisor since it is locally generated by one element. This proves (a).

For (b), we simply remark that the local ring at a point on a regular scheme is regular and thus a UFD. We now apply Theorem \ref{thm:UFD} to finish the proof. 
\end{proof}

\begin{remark}The above association gives us, for each $n\in\mathbb{N}$, a map
$$\sigma^n\mapsto \SDer_{\sigma^n}(\mathscr{E})\in\mathrm{Pic}(X).$$ However, note that if $\sigma=\id$, then $\SDer_{\sigma}(\mathscr{E})=\mathrm{Der}(\mathscr{E})\notin \mathrm{Pic}(X)$, so the association can certainly not be a group morphism. 
\end{remark}
\begin{remark}It would obviously be very interesting to know what kind of subgroup the image of $G$ generates inside $\mathrm{Pic}(X)$. For instance, are there sufficient conditions that $\langle G\rangle = \mathrm{Pic}(X)$? 
\end{remark}

Let us briefly recall the definition of a tamely ramified $G$-covering. We use a slightly more restrictive definition than usual for simplicity.
\begin{dfn}Let $\pi: X\twoheadrightarrow S$ be a \emph{finite} cover with $S$ connected and normal and $X$ normal. We let $D\subset S$ denote a normal crossings divisor such that $\pi$ is \'etale over $S\setminus D$ and assume that $\pi^{-1}(D)$ is regular. Then $X\twoheadrightarrow S$ is a (\emph{tamely}) \emph{ramified extension} if for every $s\in D$ of codimension one (in $S$) and $x\in X$ such that $s=\pi(x)$, $\mathscr{O}_{X,x}/\mathscr{O}_{S,s}$ is a (tamely) ramified extension of discrete valuation rings. If, in addition, $$X\times_{S}(S\setminus D)\to S\setminus D$$ is a $G$-torsor, i.e., Galois covering with $G=\Gal(k(X)/k(S))$, then $\pi$ is a (\emph{tamely}) \emph{ramified $G$-covering}. 
\end{dfn}
\begin{example}Let $\pi: X\twoheadrightarrow S$ be a tamely ramified $G$-covering, ramified along a divisor $D$ and let $\mathscr{E}$ be a torsion-free sheaf on $X$. Then $D$ includes the points over which $\mathrm{Inert}_G(X)$ is non-zero. Therefore, the assumptions of Theorem \ref{thm:invsheaf} are satisfied and so 
$$\SDer_G\big(\mathscr{E}\vert_{X\setminus\mathrm{Inert}_G(X)}\big)$$ is a family of invertible sheaves on $X\setminus\mathrm{Inert}_G(X)$. On the other hand, since by assumption $\pi^{-1}(D)$ is regular, by Theorem \ref{thm:invsheaf}(b), we can extend $\SDer_G(\mathscr{E})$ to a family of invertible sheaves on the whole of $X$. 
\end{example}
\begin{example}As a particular case of the preceding example we can take $\pi:X\to S$ with $X=\underline{\mathfrak{o}_L}$ and $S=\underline{\mathfrak{o}_K}$ and such that $L/K$ is a Galois extension. Let $\mathscr{E}$ be a projective $\mathfrak{o}_L$-module (which is automatically torsion-free since $\mathfrak{o}_L$ is a Dedekind domain) and let $D$ be a divisor of $S$ including all the ramified primes in $X$. In other words, $D$ is a finite set of places in $\mathfrak{o}_K$ including the ramified ones in $\mathfrak{o}_L$. Natural choices for $\mathscr{E}$ are of course $\mathfrak{o}_L$ itself and fractional ideals $\mathcal{J}\in\mathrm{Pic}(\mathfrak{o}_L)$. Then on $X\setminus\pi^{-1}(D)$, $\SDer_{\Gal(L/K)}(\mathscr{E})$ is an invertible sheaf. Therefore, we have an association (dependent on $D$) 
$$\Gal(L/K)\to \mathrm{Pic}(\underline{\mathfrak{o}_L}\setminus\pi^{-1}(D)), \quad \varsigma\mapsto \SDer_\varsigma(\mathscr{E}\vert_{\underline{\mathfrak{o}_L}\setminus\pi^{-1}(D)}),$$ for every $\mathscr{E}$. However, since $\pi^{-1}(D)$ is regular Theorem \ref{thm:invsheaf}(b) applies again, and we can extend to a the whole $\mathfrak{o}_L$,
$$\Gal(L/K)\to \mathrm{Pic}(\mathfrak{o}_L), \quad \varsigma\mapsto \SDer_\varsigma(\mathscr{E}),$$ for every projective $\mathfrak{o}_L$-module $\mathscr{E}$. In fact, in this case we could argue by simply appealing to Theorem \ref{thm:UFD} directly since $\mathfrak{o}_L$, being a Dedekind domain, is automatically regular and hence every localization is a UFD. 
\end{example}
\subsubsection{$\mathscr{D}_\sigma$-modules}\label{sec:Dmodule}
Let $\mathscr{L}$ be an invertible sheaf on $X\to S$. Then there is an open cover $\{U_i\}$ of $X$ such that the $\Oc_X$-module $\mathscr{D}:=\mathrm{Sym}(\mathscr{L})$ is given locally as $\Oc_X(U)[T]$. Fix $\sigma\in\Aut_S(X)$. Now, we let $T$ act on $\Oc_X(U)$ by the rule $T. a:=(\id-\sigma)(a)$, i.e., we let $T$ act as a $\sigma$-derivation. Then $\mathscr{D}$ can naturally be viewed as $\sigma$-difference operators (equations) on $\mathscr{O}_X$. 

With this paper we would like to argue that ``arithmetic differential equations'', in the sense of Berthelot \cite{Berthelot} on the one hand, and Buium (see references from the introduction) on the other, are not differential equations at all, but \emph{difference} equations. See also the recent treatment by Kedlaya \cite{Kedlaya}. 

\section{Equivariant Hom-Lie algebras}

\subsection{Global equivariant hom-Lie algebras}

Fix a scheme $S$ and an $S$-scheme $f:X\to S$. Let $(X)_{\mathsf{top}}$ denote the (small) $\mathsf{top}$-site associated
  with $X$, where $\mathsf{top}$ denotes suitable family of morphisms (e.g., flat, \'etale, Zariski (open immersions)). To recall, this is the category of $\mathsf{top}$-morphisms $Y\to X$ over $S$,
  with the morphisms between $Y\to X$ and $Y'\to X$ being $S$-$\mathsf{top}$-morphisms. The covering
  families are families of $S$-$\mathsf{top}$-morphisms $(U_i\to Y\to X)_i$, where $i\in
  I$ for some 
  index set $I$. For details on this see \cite{Milne}.
  All morphisms are over $S$ so from now on we simply omit mentioning $S$ unless ambiguities can arise. 
  
  By $G\to S$ we denote a group scheme over $S$.   

Let $\Oc_X$ be the structure sheaf on $(X)_{\mathsf{top}}$ in the sense
that $\Oc_X(U):=H^0(U,\Oc_U)$ for
$U\in\ob((X)_{\mathsf{top}})$ and let $\Ac$
be a sheaf of $\Oc_X$-algebras. In addition, let $\Lc$ be an $\Ac$-module. Let $G$ act $\Oc_S$-linearly on $\Lc$. We don't specify in advance how $G$ acts on $\Ac$. 
\begin{dfn}\label{dfn:globhom}Given the above data, an \emph{equivariant hom-Lie
  algebra for $G$ on 
  $(X)_{\mathsf{top}}$ over $\Ac$} is a $(G-\Ac)$-module $\Lc$ together
  with, for each open $U\in (X)_{\mathsf{top}}$, an $\Oc_S$-bilinear product $\llangle\,\cdot,\cdot\,\rrangle_{U}$ on
  $\Lc(U)$ such that 
  \begin{itemize} 
       \item[(hL1.)] $\llangle a,a\rrangle_{U} =0$, for all $a\in \Lc(U)$;
       \item[(hL2.)] $\circlearrowleft_{a,b,c}\big (\llangle a^\sigma,\llangle
       b,c\rrangle_{U}\rrangle_{U}+q_\sigma\cdot\llangle a,\llangle
       b,c\rrangle_{U}\rrangle_{U}\big)=0$, for all $\sigma\in G\vert_{U}$ and some $q_\sigma\in \mathscr{A}(U)$. 
  \end{itemize} A morphism of equivariant hom-Lie algebras $(\Lc, G)$ and
       $(\Lc', G')$ is a pair $(f,\psi)$ of a morphism of
       $\Oc_X$-modules $f: \Lc\to\Lc'$ and $\psi: G\to G'$ such 
       that $f\circ \sigma=\psi(\sigma)\circ f$, and $f({U})\big(\llangle
       a,b\rrangle_{\Lc; U}\big) =\llangle  
       f({U})(a),f({U})(b)\rrangle_{\Lc'; U}$. 
\end{dfn}Notice that the definition implies that for a morphism $$(f,\psi): (\mathscr{L},G)\to (\mathscr{L}', G')$$ we must have $f(q_\sigma)=q_{\psi(\sigma)}$. 

If all the $q_\sigma=\id$ for all $\sigma\in G$ we can say that the equivariant hom-Lie algebra is \emph{strict}. 

We denote by $\mathsf{EquiHomLie}_{X_{/S}}$ denote the category of all
equivariant hom-Lie algebras on $(X)_{\mathsf{top}}$ with morphisms given in the
definition. We will sometimes use the notation $\mathscr{L}^\diamond$ as a short-hand for $(\mathscr{L},\mathscr{A},G,\llangle\,,\,\rrangle)$ where the 'diamond' is there to remind us that there are objects not specified explicitly. 

Hence, an equivariant hom-Lie algebra is a family of (possibly isomorphic) products
para\-metrized by $G$. A product $\llangle\,\cdot,\cdot\,\rrangle_g$,
for fixed $g\in G$, is a \emph{hom-Lie algebra on $\Lc$}. The category of hom-Lie algebras over $X_{/S}$ is denoted $\mathsf{HomLie}_{X_{/S}}$. 

Notice that, by the requirements that $G$ is a group, every equivariant hom-Lie algebra
includes a Lie algebra, possibly abelian, corresponding to 
$e\in G$ (see Example \ref{exam:Lie} below). The hom-Lie algebras
corresponding to $g\neq e$ in the equivariant hom-Lie algebra can be viewed as
deformations, in some weak sense, of the Lie algebra in the equivariant hom-Lie
algebra. It is not strictly necessary for the definition above to
make sense, to require that $G$ is a group. However, it is quite
convenient as we then, as was mentioned, can view a equivariant hom-Lie algebra
as a family of deformations of the Lie algebra corresponding to $g=e$. 

From now on we will suppress notation for the topology when writing the scheme $X$; anyone topology is as good as any other. 

\begin{remark}Since $G$ is a group, we require that $G$ acts as automorphisms on the sheaf $\Lc$. This is stricty not necessary for the definition to work. We could equally well have worked with monoid schemes, giving endomorphisms on the sheaf instead. But in all the examples we have in mind there is a group present so we will stick with this. 
\end{remark}
\subsubsection{Affinization}When $S=\Spec(k)$, $X$ affine over $S$ and $U\subseteq X$ an open affine, Definition
  \ref{dfn:globhom} 
  specializes to 
\begin{itemize}
\item[-] $\Oc_X\leadsto \mathfrak{o}\in\ob(\Com(k))$; 
\item[-] $\Ac\leadsto A\in\ob(\Com(\mathfrak{o}))$;
\item[-] $\llangle\,\cdot,\cdot\,\rrangle_g({U})\leadsto
  \llangle\,\cdot,\cdot\,\rrangle_g$ (only one product for each $g\in G$).
\end{itemize}

Hence, a hom-Lie algebra is then simply an $A$-module $L_g$
  with an $\mathfrak{o}$-bilinear product satisfying conditions (hL1.) and
  (hL2.) of Definition \ref{dfn:globhom} and a equivariant hom-Lie algebra $L$ is
  the disjoint union over all hom-Lie algebras, i.e., $L:=\coprod_{g\in
  G}L_g$. See Example 
  \ref{exam:special} for an explicit example. 

When we need to specify the difference of the above case and Definition
\ref{dfn:globhom}, we call this the \emph{affine case} and
\ref{dfn:globhom} the \emph{global case} (and so the global case
includes the special).

\begin{remark}
Thus affinization is in a sense a way of comparing the individual hom-Lie algebras in the equivariant structure.
\end{remark}
\section{Base change}
\begin{thm}\label{thm:basechange}Let $f:X\to Y$ be a morphism in
  $\Sch/S$ and let $\Ac$ an $\Oc_Y$-algebra on $Y$. Suppose that $\Lc$ is
  a hom-Lie algebra over $\Ac$ on $Y$. Then, 
$$f^\ast\Lc:=f^{-1}\Lc\otimes_{f^{-1}\Oc_Y}\Oc_X,\quad \text{the
  \emph{pull-back} of } \Lc,$$ is a hom-Lie algebra
  over $f^{\ast}\Ac$ on $X$. 
\end{thm}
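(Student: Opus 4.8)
The goal is to show that the pullback $f^\ast\Lc = f^{-1}\Lc\otimes_{f^{-1}\Oc_Y}\Oc_X$ carries a hom-Lie algebra structure over $f^\ast\Ac$ on $X$. Since a hom-Lie algebra (Definition \ref{dfn:globhom}, affinized to the non-equivariant single-$\sigma$ case) consists of an $\Ac$-module $\Lc$ equipped with an $\Oc_S$-bilinear product satisfying (hL1.) and (hL2.), the plan is to produce the product on $f^\ast\Lc$ by pulling back the bracket, to transport the group action of $G$ to the pullback, and then to verify (hL1.) and (hL2.) locally on an affine cover. First I would fix an affine situation: choose $U = \Spec(A) \subseteq X$ mapping into $V = \Spec(B) \subseteq Y$, so that $\Ac(V) = A_0$ is a $B$-algebra, $\Lc(V)$ is an $A_0$-module with bracket $\llangle\,\cdot,\cdot\,\rrangle_{V}$, and on $U$ the pullback module is $\Lc(V)\otimes_{B} A = \Lc(V)\otimes_{A_0}(A_0\otimes_B A)$. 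The morphism $f$ induces the ring map $B\to A$, hence $A_0 \to A_0\otimes_B A =: \Ac'$, the local sections of $f^\ast\Ac$.

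The key step is to define the pulled-back product. On elementary tensors $\ell\otimes a$ and $\ell'\otimes a'$ in $\Lc(V)\otimes_{A_0}\Ac'$, I would set
$$\llangle \ell\otimes a,\ \ell'\otimes a'\rrangle_{U} := \llangle \ell,\ell'\rrangle_{V}\otimes (a a'),$$
extended $\Oc_S$-bilinearly, where $aa'$ is the product in $\Ac'$. One must check this is well-defined over the tensor-product relations, which holds because the original bracket is $\Ac$-bilinear (the product $\llangle a\cdot\Delta_\sigma, b\cdot\Delta_\sigma\rrangle$ in \eqref{eq:prodtwist} is built $A$-linearly), so the bracket descends through $\otimes_{A_0}$. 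The group action transports via $\sigma\otimes\id$: since $G$ acts $\Oc_S$-linearly on $\Lc$ and $f$ is a morphism over $S$, the endomorphisms $\sigma\in G$ act on $f^{-1}\Lc$ and extend by $\sigma\otimes\id_{\Oc_X}$ to $f^\ast\Lc$, with the associated elements $q_\sigma\in\Ac(V)$ pulling back to their images in $\Ac'$. Skew-symmetry (hL1.) is immediate since $\llangle\ell,\ell\rrangle_{V}=0$ forces $\llangle\ell\otimes a,\ell\otimes a\rrangle_{U}=0$ on generators, and bilinearity propagates it.

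The twisted Jacobi identity (hL2.) is the substantive verification, but it reduces to the base case by naturality. Because the pulled-back bracket and the pulled-back action are both defined as $\sigma\otimes\id$ and $\llangle\,\cdot,\cdot\,\rrangle_V\otimes\text{(mult)}$, each term in the cyclic sum
$$\circlearrowleft_{a,b,c}\big(\llangle a^\sigma,\llangle b,c\rrangle_{U}\rrangle_{U}+q_\sigma\cdot\llangle a,\llangle b,c\rrangle_{U}\rrangle_{U}\big)$$
on $f^\ast\Lc$ is the image under $-\otimes\Oc_X$ of the corresponding term on $\Lc$, with $q_\sigma$ replaced by its image in $\Ac'$. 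Since the original sum vanishes and $-\otimes_{f^{-1}\Oc_Y}\Oc_X$ is additive and $\Oc_S$-linear, the pulled-back sum vanishes term by term. Finally, the local definitions glue: on overlaps they agree because they are all induced from the single bracket on $\Lc$ and the tensor construction is functorial, so by the standard gluing of sheaf morphisms the product is defined globally on $f^\ast\Lc$ over all of $X$.

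\textbf{Main obstacle.} The one genuinely delicate point is well-definedness of the pulled-back product across the tensor relations over $f^{-1}\Oc_Y$, i.e., checking that $\llangle\,\cdot,\cdot\,\rrangle_V\otimes\text{(mult)}$ respects the identifications $\ell b\otimes a = \ell\otimes ba$ for $b\in f^{-1}\Oc_Y$. This is exactly where one uses that the original product is balanced with respect to the $\Ac$-action (not merely $\Oc_S$-bilinear); everything else — (hL1.), (hL2.), and gluing — then follows formally by the naturality of the tensor-product and inverse-image functors.
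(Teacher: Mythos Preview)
Your approach is essentially the same as the paper's: define the bracket on $f^\ast\Lc$ by $\llangle \ell\otimes a,\ \ell'\otimes a'\rrangle := \llangle\ell,\ell'\rrangle\otimes aa'$, extend the $G$-action by $(\ell\otimes a)^\sigma := \ell^\sigma\otimes a$, and check the axioms. The paper is terser --- it passes through direct limits and sheafification to handle $f^{-1}\Lc$ rather than working affine-locally --- but the substance is identical.

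There is, however, a genuine error in your justification of well-definedness. You claim the original bracket is $\Ac$-bilinear, citing the product in \eqref{eq:prodtwist}, but this is false: for $c\in A$,
\[
\llangle ca\cdot\Delta_\sigma,\ b\cdot\Delta_\sigma\rrangle
= \big(\sigma(c)\sigma(a)\partial(b)-\sigma(b)(\partial(c)a+\sigma(c)\partial(a))\big)\cdot\Delta_\sigma,
\]
which is not $c\cdot\llangle a\cdot\Delta_\sigma, b\cdot\Delta_\sigma\rrangle$ unless $\sigma(c)=c$ and $\partial(c)=0$. What the tensor formula over $\otimes_{f^{-1}\Oc_Y}$ actually requires is $\Oc_Y$-bilinearity of the bracket --- this is what the paper asserts and uses --- and not the stronger $\Ac$-bilinearity, which fails in the motivating examples. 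So your identification of the ``main obstacle'' is correct, but the property you invoke to overcome it is the wrong one.
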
 
\begin{proof}This is standard. Every $\Lc(V)$ comes endowed with an
  $\Oc_Y$-bilinear product $\llangle\,\cdot,\cdot\,\rrangle(V)$ satisfying (hL1.) and (hL2.). Taking
  the direct limit preserves the algebra structure. This means that $\llangle \,\cdot,\cdot\,\rrangle(V)$ extends to a
  well-defined product on $\dirlim \Lc(V)$ and also to the sheafification. Hence $f^{-1}\Lc$ is a
  hom-Lie algebra. The extension to $f^\ast\Lc$ is defined by 
$$\llangle a\otimes x,a'\otimes x'\rrangle:=\llangle a,a'\rrangle\otimes
  xx',$$ with action of $g$ on
  $f^\ast\Lc$ extended by $(a\otimes x)^g:=a^g\otimes x$. 
\end{proof}
\subsection{Affinization of base change}
 Let $A$, $B$ and $C$ be $k$-algebras and $A$ and $B$, $C$-algebras. Suppose $L_A$ is a hom-Lie 
    algebra over $B$, then $L_B:=L_A\otimes_C B$ is a
    hom-Lie algebra over $B$ with bracket defined by
    $$\llangle l_1\otimes b_1,l_2\otimes b_2\rrangle_g:=\llangle
    l_1,l_2\rrangle_g\otimes b_1b_2,\quad g\in G$$ where we extend $g$ as
    $(l\otimes b)^g:=l^g\otimes b$.  
\subsection{Change of group}
We will now consider what happens when we change the group. For simplicity we consider only the special case. Everything globalizes without problem.

Let $L$ be an $A$-module, $A\in\Com(k)$ equipped with an equivariant hom-Lie algebra with group $G$. Suppose we are given a sequence of groups 
$$\cdots\rightarrow H_{i+1}\xrightarrow{\varsigma_{i+1}} H_i\rightarrow \cdots \xrightarrow{\varsigma_2} H_1\xrightarrow{\varsigma_1} H\xrightarrow{\varsigma} G\xrightarrow{\psi} E.$$ 
Then we have the following proposition.
\begin{prop}The equivariant hom-Lie algebra on $L$ over $G$ descends to a canonical one, $\varsigma^\ast L$, over $H$ via $\varsigma$; also $G$ vill act on the invariants $L^{\varsigma(H)}$ with induced equivariant hom-Lie algebra over $G/\im(\varsigma)$. We also have an induced map $L^G\to L^{\varsigma(H)}$, or more generally 
$$\cdots\leftarrow L^{\varsigma_{i+1}(H_{i+1})}\xleftarrow{} L^{\varsigma_i(H_i)}\leftarrow \cdots \xleftarrow{} L^{\varsigma_1(H_1)}\xleftarrow{} L^{\varsigma(H)}\xleftarrow{} L^G,$$with induced hom-Lie structures. Notice that $L^G$ is the trivial (abelian) equivariat hom-Lie algebra. In addition, if $\psi: G\twoheadrightarrow E$ is a surjection, then $L^{\ker\psi}\subseteq L$ is a equivariant hom-Lie algebra over $E$.
\end{prop}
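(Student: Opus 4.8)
The plan is to read the equivariant structure over $G$ concretely---for each $g\in G$ a skew $\Oc_S$-bilinear product $\llangle\cdot,\cdot\rrangle_g$ making $(L,\llangle\cdot,\cdot\rrangle_g)$ the hom-Lie algebra attached to $g$, the family being permuted by the $G$-action---and then to transport this package along the given homomorphisms. As the term \emph{equivariant} and the morphism axioms of Definition \ref{dfn:globhom} dictate, I take each $\tau\in G$ to act as an automorphism of the products, $\tau\llangle a,b\rrangle_g=\llangle\tau a,\tau b\rrangle_{\tau g\tau^{-1}}$ with $\tau(q_g)=q_{\tau g\tau^{-1}}$; the only genuine task is then to see that (hL1.) and (hL2.) survive pullback, restriction to invariants, and passage to a quotient group.

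First I would build $\varsigma^\ast L$ by keeping $L$ and all its products, letting $H$ act through $\varsigma$ via $a^h:=a^{\varsigma(h)}$ (an action since $\varsigma$ is a homomorphism), and declaring the $h$-component to be $\llangle\cdot,\cdot\rrangle_{\varsigma(h)}$ with $q_h:=q_{\varsigma(h)}$. Then (hL1.) is untouched, and the $h$-twisted identity for $\varsigma^\ast L$ is literally the $\varsigma(h)$-twisted identity in $L$, so $\varsigma^\ast L$ is an equivariant hom-Lie algebra over $H$, manifestly canonical and functorial in $\varsigma$. The displayed tower I would then get from pure monotonicity of invariants: writing $\Phi_i\colon H_i\to G$ for the composite down to $G$, one has $\im\Phi_{i+1}\subseteq\im\Phi_i\subseteq\cdots\subseteq\im\varsigma\subseteq G$, and since invariants reverse inclusions, imposing fewer symmetries enlarges the fixed module, yielding the chain of inclusions $L^G\hookrightarrow L^{\varsigma(H)}\hookrightarrow L^{\varsigma_1(H_1)}\hookrightarrow\cdots$ read against the arrows, each a morphism for the induced structures constructed next.

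For those induced structures, I would first note that for a subgroup $N\subseteq G$ the products restrict to $L^N$: if $x,y\in L^N$ and $n\in N$ then $n\llangle x,y\rrangle_g=\llangle nx,ny\rrangle_{ngn^{-1}}=\llangle x,y\rrangle_{ngn^{-1}}$, so every product indexed by the centraliser of $N$---in particular the $e$-product---lands in $L^N$. To let a quotient group act, for $N=\im\varsigma$ I would assume (and flag as a hypothesis the statement leaves tacit) that $\im\varsigma\trianglelefteq G$; then $G$ stabilises $L^{\im\varsigma}$, the subgroup $\im\varsigma$ acts trivially there, so the action descends to $G/\im\varsigma$, and for $\bar g\in G/\im\varsigma$ I set $\llangle\cdot,\cdot\rrangle_{\bar g}:=\llangle\cdot,\cdot\rrangle_g$, $q_{\bar g}:=q_g$ for any lift $g$, the restrictions to $L^{\im\varsigma}$ being lift-independent because two lifts differ by an element acting as the identity. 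The identical argument with the \emph{automatically} normal $N=\ker\psi$ gives that $L^{\ker\psi}$ is an equivariant hom-Lie algebra over $E=G/\ker\psi$, which is the last assertion; and $L^G$ is the case $\varsigma=\id_G$, where the residual group is $G/G=\{e\}$, so only the $e$-component survives---a genuine Lie bracket that is the trivial (abelian) one, consistent with the remark that the $e$-part is the Lie-algebra piece and with its coming from $\id-\id=0$ in the generating twisted-derivation examples.

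I expect the main obstacle to be exactly the well-definedness of the descended data on a quotient: one must pin down how the $G$-action moves both the products and the family $\{q_g\}$---the conjugation compatibility extracted above---and then verify that restriction to $\im\varsigma$- or $\ker\psi$-invariants kills the dependence on the coset representative. A secondary but genuine point is that $\im\varsigma\trianglelefteq G$ is needed for the $G/\im\varsigma$ statement and is not forced by the hypotheses as written (in the Galois examples $G$ may well be non-abelian), so I would either add it as an assumption or pass to a normaliser quotient; no such difficulty arises for $L^{\ker\psi}$, which is the cleanest part.
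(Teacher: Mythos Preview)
Your argument is correct and is precisely the unpacking that the paper omits: the paper's own proof is the single word ``Obvious.'' Your construction of $\varsigma^\ast L$ by pullback of the products and the $q$-data, the monotonicity-of-invariants chain, and the descent of the action to $G/\ker\psi$ are exactly what one has to check, and you do so.

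Two remarks. First, the conjugation compatibility $\tau\llangle a,b\rrangle_g=\llangle\tau a,\tau b\rrangle_{\tau g\tau^{-1}}$ that you invoke is not spelled out in Definition~\ref{dfn:globhom}; the paper's definition is somewhat loose on this point, so you are supplying a natural reading rather than quoting an axiom. For the bare statement of the proposition one can get by with less: the pullback $\varsigma^\ast L$ and the $\ker\psi$-case need only that each $\llangle\cdot,\cdot\rrangle_g$ restricts to the relevant invariants, which already follows once the elements involved are fixed. Second, your observation that $\im\varsigma\trianglelefteq G$ is tacitly required for the $G/\im\varsigma$-statement is a genuine point the paper does not address; flagging it, as you do, is the right call.
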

\begin{proof}Obvious.
\end{proof}
Recall that the induced $G$-module coming from an $H$-module $M$, is defined as
$$\mathrm{Ind}^H_G(M):=\{\psi: G\to M\mid \psi(hg)=h\psi(g), \, \text{for } h\in H\,\}.$$ The $G$-module structure on $\mathrm{Ind}^H_G(M)$ is defined by $(g'.\psi)(g):=\psi(gg')$.
\begin{prop}Suppose that the $A$-module $L$ is an equivariant hom-Lie algebra over $H$. Then $\mathrm{Ind}^H_G(L)$ is an equivariant hom-Lie algebra over $G$ with product defined by
$$\llangle \psi,\psi'\rrangle_{\mathrm{Ind}^H_G(L)}(g):=\llangle \psi(g),\psi'(g)\rrangle_L$$and $A$-module structure given by $(a.\psi)(g):=a\psi(g)$.
\end{prop}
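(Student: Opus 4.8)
The plan is to verify the four ingredients---well-definedness, bilinearity, (hL1) and (hL2)---by exploiting that every operation on $\mathrm{Ind}^H_G(L)$ is defined pointwise. Concretely I would introduce the evaluation maps $\ev_g\colon \mathrm{Ind}^H_G(L)\to L$, $\psi\mapsto\psi(g)$, for each $g\in G$, and record the three intertwining identities $\ev_g(\llangle\psi,\psi'\rrangle)=\llangle\ev_g(\psi),\ev_g(\psi')\rrangle_L$, $\ev_g(a.\psi)=a\,\ev_g(\psi)$, and $\ev_g(g'.\psi)=\ev_{gg'}(\psi)$, the last being just the definition $(g'.\psi)(g)=\psi(gg')$. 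Since a function $G\to L$ vanishes iff all of its values do, each bracket identity in $\mathrm{Ind}^H_G(L)$ may be tested after applying every $\ev_g$, reducing it to an identity in $L$.

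First I would verify that the proposed bracket and scalar action really take values in $\mathrm{Ind}^H_G(L)$, i.e.\ respect the defining relation $\psi(hg)=h\psi(g)$. For the bracket, $\llangle\psi,\psi'\rrangle(hg)=\llangle h\psi(g),h\psi'(g)\rrangle_L$, and this equals $h\llangle\psi,\psi'\rrangle(g)$ exactly because $H$ acts on $L$ by automorphisms of the hom-Lie structure, so that $\llangle hx,hy\rrangle_L=h\llangle x,y\rrangle_L$; for the scalar action the analogous computation needs $a.(h.m)=h.(a.m)$, i.e.\ that the $H$-action is compatible with the $A$-module structure. I would state these two equivariance facts explicitly, as they are the only genuine inputs beyond bookkeeping. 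Granting them, bilinearity of $\llangle\,,\,\rrangle$ over $\Oc_S$ and its $A$-linearity descend pointwise from the corresponding properties of $\llangle\,,\,\rrangle_L$, and (hL1) is immediate, since $\ev_g(\llangle\psi,\psi\rrangle)=\llangle\psi(g),\psi(g)\rrangle_L=0$ for all $g$.

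The substantive step---and the one I expect to be the main obstacle---is the twisted Jacobi identity (hL2) for the $G$-action. Applying $\ev_g$ to the cyclic sum and using $\ev_g(\psi_i^{g'})=\psi_i(gg')$, the $g$-component becomes a cyclic sum in $L$ whose outer arguments are evaluated at $gg'$ while the inner brackets are evaluated at $g$. To recognize this as an instance of (hL2) for $L$ over $H$, one must rewrite $\psi_i(gg')$ in terms of $\psi_i(g)$; the induction relation gives $\psi_i(gg')=(gg'g^{-1}).\psi_i(g)$ precisely when $gg'g^{-1}\in H$, and then the common twist is $\sigma=gg'g^{-1}$, forcing the definition $q_{g'}(g):=q_{gg'g^{-1}}$. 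I would isolate this as a short lemma: for fixed $g'$ and $g$ with $gg'g^{-1}\in H$, the $g$-component of the induced cyclic sum is exactly the (hL2) relation for $L$ at $x_i=\psi_i(g)$ twisted by $gg'g^{-1}$, hence vanishes.

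The delicate point, which I would confront rather than hide, is that the twist $\sigma=gg'g^{-1}$ depends on $g$ and lies in $H$ only under a normality-type hypothesis, and moreover the coefficient $q_{g'}$ of an equivariant hom-Lie algebra must be a single element of $A$, not a $g$-dependent one. The reduction is therefore cleanest when conjugation makes $gg'g^{-1}$ both independent of $g$ and a member of $H$: this happens when $H\trianglelefteq G$ and, in the abelian setting typical of the arithmetic examples, for $g'\in H$, where $gg'g^{-1}=g'$ and one may simply take $q_{g'}$ equal to the $q$ attached to $g'$ in $L$. I expect the bulk of the genuine work to lie in pinning down the exact hypotheses on the pair $(G,H)$ under which (hL2) holds for every $g'\in G$; the remaining axioms are formal consequences of the pointwise definitions together with the $H$-equivariance established at the outset.
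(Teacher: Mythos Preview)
The paper's own proof is a single word: ``Obvious.'' Your proposal is therefore far more detailed than what appears in the paper, and your overall strategy---reducing every axiom to a pointwise check via the evaluation maps $\ev_g$---is exactly the natural one and is presumably what the author has in mind.

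Where you diverge from the paper is in taking (hL2) seriously rather than declaring it obvious. Your analysis here is correct and identifies a genuine subtlety that the paper does not address. The computation $\ev_g\big(\psi^{g'}\big)=\psi(gg')=(gg'g^{-1}).\psi(g)$, valid only when $gg'g^{-1}\in H$, shows that the induced twisted Jacobi identity at $g'\in G$ reduces to the one for $L$ with twist $gg'g^{-1}$, and this twist both depends on $g$ and must land in $H$. As you say, this forces a normality-type hypothesis (or at least $g'$ in the normal core of $H$), and even then the coefficient $q_{g'}$ must be a single element of $\mathscr{A}$ rather than a $g$-dependent one. The cleanest case is $G$ abelian and $g'\in H$, where $gg'g^{-1}=g'$ and one sets $q_{g'}:=q_{g'}^L$. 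None of this is discussed in the paper.

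So your proof is not wrong; rather, you have uncovered that the proposition as stated requires further hypotheses (or a clarification of what ``equivariant hom-Lie algebra over $G$'' means for the induced object) that the one-word proof elides. In a write-up I would present the pointwise verification of well-definedness, bilinearity, and (hL1) exactly as you outline, and then state explicitly the additional assumption under which (hL2) goes through---for instance, $H\trianglelefteq G$ with $G/H$ acting trivially on the relevant $q$'s, or $G$ abelian---rather than claiming the result in full generality.
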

\begin{proof}Obvious.
\end{proof}
Notice that I allow arbitrary group morphisms when defining induced modules, contrary to the ordinary usage, which restricts to injective morphisms. In general, induced modules are only useful when $H$ is indeed a subgroup of $G$. 

\section{Examples}
\subsection{General examples}
\begin{example}\label{exam:Lie}Suppose 
    $G=\{\id\}$, the 
  trivial group. Then the above definition amounts to a sheaf of
  $\Oc_X$-Lie algebras. In the special case we simply get an ordinary
  $\mathfrak{o}$-Lie algebra. 
\end{example}
\begin{example}[Example of affinization]\label{exam:special} Let
  $A\in\ob(\Com(k))$, $L\in\ob(\Mod(A))$ and 
  $G$ a group acting $k$-linearly on $L$. Then an
  equivariant hom-Lie algebra on $L$ 
  over $A/k$ is
  family of $k$-bilinear products $\llangle\,\cdot,\cdot\,\rrangle_g$,
  $g\in G$, satisfying 
  $$\llangle a,a\rrangle_g=0\quad\text{and}\quad
  \circlearrowleft_{a,b,c}\big (\llangle a^g+a,\llangle
  b,c\rrangle_g\rrangle_g\big)=0, \quad\text{for all}\quad
  g\in G.$$ A morphism of equivariant hom-Lie algebras $L$ and $L'$ over
  $A/k$ is a morphism of $k$-modules such that $f(a^g)=f(a)^g$ (i.e.,
  $G$-equivariance) and
  $f\llangle a,b\rrangle^L_g =\llangle f(a),f(b)\rrangle_g^{L'}$. 

If $L$ and $L'$ comes equipped with different group actions, $G$ and
$G'$, we demand according to definition, instead of $G$-equivariance,
that $f(a^g)=f(a)^{g'}$, 
for all $g\neq e\in G$, $g'\neq e'\in G'$.
\end{example}
\begin{example}[Twisted derivations]\label{exam:homLietwist}Let $A\in\ob(\Com(k))$ and assume $M\in\ob(\Mod(A))$ torsion-free. Suppose that
  $\varsigma\in\End(A)$, 
  $\delta_\sigma\in\mathrm{Der}_\sigma(M)$ are such that $\partial=a(\id-\varsigma)$, $a\in A$, and
  $$\partial\circ\varsigma=q\cdot\varsigma\circ\partial,\quad\text{with}\quad q\in
  A.$$ Assume in addition that
  $$\sigma\Ann(\partial)\subseteq \Ann(\partial),$$which is automatic for instance when $A$ is a domain. Then
  Theorem \ref{thm:twistprod} endows $A\cdot\delta_\sigma$ with the
  structure of hom-Lie algebra. Taking a subgroup $G\subseteq \End(A)$ with a family $\delta_G\subseteq\mathrm{Der}_\sigma(M)$,
  $\delta_G:=\{\delta_\sigma\mid \sigma\in G\}$, such that
  $$\partial_{\varsigma}\circ\varsigma=
  q_\varsigma\cdot\varsigma\circ\partial_{\varsigma},\quad\text{with}  
  \quad q_\varsigma\in
  A,\quad\text{for each
  $\sigma\in G$},$$ and where $\sigma(am)=\varsigma(a)\sigma(m)$. Then Theorem \ref{thm:twistprod} gives us an equivariant hom-Lie
  algebra for $G$ on $M$. It is easy to see that if $a\in A^\times$, then $q_\varsigma:=a/\varsigma(a)$ satisfies the assumptions on $q_\sigma$. Indeed, 
  $$\sigma\circ\partial_{\sigma}(b)=\sigma\circ (a(\id-\sigma))(b)=\sigma(a)(\id-\sigma)\circ\sigma(b),$$ so multiplying by $a/\sigma(a)$ gives the desired identity. 
  Fixing $a\in A^\times$, we get an association $G\to A^\times$, $\sigma\mapsto a/\sigma(a)$. In other words, we get an element in $\mathrm{B}^1(G,A^\times)$. This gives a family $\{(q_\sigma,\partial_{\sigma})\mid \sigma\in G, \, \partial_{\sigma}=a(\id-\sigma)\}$ satisfying the required conditions of Theorem \ref{thm:twistprod}. 
  
  Notice that we in particular get that if $A$ is a domain, $\mathrm{Der}_G(\mathrm{Fr}(A))$ is an equivariant hom-Lie algebra, where $\mathrm{Fr}(A)$ is the fraction field of $A$. 
  
We can globalize this in the evident manner. Namely, let $X$ be a scheme, $\mathscr{A}$ a sheaf of $\mathscr{O}_X$-algebras and $\mathscr{E}$ a torsion-free $\mathscr{A}$-module. First, for $U\subseteq X$ an open affine, let $\partial$ be a section of $\mathcal{D}er_\sigma(\mathscr{A})(U)$ such that $\partial\circ \sigma =q_{\sigma,U}\cdot\sigma\circ \partial$, for some $q_{\sigma,U}\in \mathscr{A}(U)$, and $\sigma\Ann(\partial)\subseteq\Ann(\partial)$. Then to any $\delta\in\mathcal{D}er_\sigma(\mathscr{E})(U)$ such that $$\delta(am)=\partial(a)m+\sigma(a)\delta(m),$$ is attached a canonical global hom-Lie algebra, $\mathscr{A}\cdot\delta\subseteq\mathcal{D}er_\sigma(\mathscr{A})$, and therefore a global equivariant hom-Lie algebra, $\mathscr{A}\cdot\delta_G$. 
\end{example}

\begin{example}[First order terms in difference operators.]
First recall (specialized version of) the definition of $\mathscr{D}_\sigma$-modules from subsection \ref{sec:Dmodule}. We let $\mathfrak{o}$ be commutative as always and consider the polynomial algebra $\mathscr{D}_\sigma:=\mathfrak{o}[T]$. Taking $y\in \mathfrak{o}$, we let $T$ act on $\mathfrak{o}$ as $T.a:=y(\id-\sigma)(a)$. This is the (non-commutative) algebra of $\sigma$-difference operators. In the following we put $\partial_\sigma:=y(\id-\sigma)$ (suppressing $y$ from the notation) and view $\mathscr{D}$ as the ring $\mathfrak{o}[\partial_\sigma]$. 

For this example it is useful to introduce the following notations. 
Let $\partial_\sigma $ be a $\sigma$-derivation on $\mathfrak{o}$. Then we denote by $\pi^n_i$ the sum
of all permutations of $(n-i)$ mappings $\partial_\sigma $ and $i$ mappings
$\sigma$. As an example
$\pi^3_1=\partial_\sigma ^2\circ\sigma+\partial_\sigma \circ\sigma\circ\partial_\sigma +\sigma\circ\partial_\sigma ^2$.
Note in particular that $\pi^k_k=\sigma^k$ and $\pi^k_0=\partial_\sigma ^k$. We
also put $\pi^n_k=0$ for $n<k$ and $k<0$. Then one can easily prove that
\begin{equation}\label{eq:pi} 
\partial_\sigma ^n(ab)=\sum_{i=0}^n\pi^n_i(a)\partial_\sigma ^i(b)\qquad
  \text{(Leibniz' rule for $\sigma$-derivations)}.
\end{equation}
\begin{subequations}
A $\sigma$-difference operator is a linear combination
$$P(\partial_\sigma)=\sum_{i=0}^n p_i\partial_\sigma^i, \quad p_i\in \mathfrak{o}.$$ Now one can introduce a skew-symmetric product on the ring of $\sigma$-difference operators as
\begin{align*}
\llangle
a\cdot\partial_\sigma ^n,b\cdot\partial_\sigma ^m\rrangle:=
\sigma(a)\cdot\partial_\sigma ^n(b\cdot\partial_\sigma ^m)-\sigma(b)\cdot\partial_\sigma ^m(a\cdot\partial_\sigma ^n).
\end{align*}
It is possible to compute, with some work, that on monomials we have, with $n<m$
\begin{align}\label{eq:bracket1}
\begin{split}
\llangle
a\cdot\partial_\sigma ^n,b\cdot\partial_\sigma ^m\rrangle
=-\sum_{i=n}^{m-1}&\sigma(b)\pi^m_{i-n}(a)\cdot\partial_\sigma ^i\\
&+\sum_{i=m}^{n+m}\big
(\sigma(a)\pi^n_{i-m}(b)-\sigma(b)\pi^m_{i-n}(a)\big )\cdot\partial_\sigma ^i.
\end{split}
\end{align}If $n=m$ the above equation reduces to
\begin{align}\label{eq:brackH_qn=m}
\llangle a\cdot\partial_\sigma ^n,b\cdot\partial_\sigma ^n\rrangle
&=\sum_{i=0}^n(\sigma(a)\pi^n_i(b)-\sigma(b)\pi^n_i(a))\cdot\partial_\sigma ^{n+i}.
\end{align}
\end{subequations}Restricting to first-degree terms, this reduces to Example \ref{exam:homLietwist}, so hom-Lie algebras can be viewed as the linear part of a commutator-like product on difference operators, just as Lie algebras can be viewed as the linear part of differential operators under the classical commutator. 
\begin{remark}There is another way to express difference operators which is perhaps more prevalent in the literature (see \cite{Kedlaya}, for instance). Namely, a difference operator in that sense
is a formal expression 
\begin{equation}\label{eq:differenceoper}
P(\sigma)=\sum_{i=0}^n p_i\sigma^i,\quad p_i\in \mathfrak{o}.
\end{equation} You can go from this to the above, by simply putting $\partial=\id-\sigma$ and rearranging. Also, if $y$ is invertible (in particular if it is $1$) you can also go in the other direction. 
\end{remark} 
\end{example}

\subsection{Group representations}\label{sec:grouprep}

Let $(X_{/S},\mathscr{A})$ be an $S$-scheme together with a sheaf of coherent $\mathscr{O}_X$-algebras. Put $Y:=\mathbf{Spec}(\mathscr{A})$, the global spectrum of $\mathscr{A}$. Of particular interest to us is the case where $\mathscr{A}$ is a finite-rank $\mathscr{O}_X$-algebra. Assume that $G_{/S}$ is a group scheme acting $S$-linearly and equivariantly on $\mathscr{A}$. Then this induces a $G$-action on $Y_{/S}$. 

 Now take a global section $w\in \mathscr{A}$ and form $D_w:=w(\id-\sigma)$, for $\sigma\in G$. We leave $\sigma$ out of the notation whenever misunderstandings are unlikely. Then we form the left $\mathscr{A}$-module 
 $$\mathscr{A}\cdot D_w\subseteq \SDer_\sigma(\mathscr{A}).$$ On open affines $U\subseteq X$, $\mathscr{A}(U)$ is a $G$-representation. 
 
On this $\mathscr{A}$-module we introduce a hom-Lie algebra product as 
$$\llangle a\cdot D_w,b\cdot D_w\rrangle(U) := \big(\sigma(a)D_w(b)-\sigma(b)D_w(a)\big)\cdot D_w, \quad a,b\in\mathscr{A}(U).$$ Obviously, we consider $w$ restricted to $\mathscr{A}(U)$.

The above product defines a global hom-Lie algebra if $D_w\circ\sigma=q\sigma\circ D_w$, for some $q\in \mathscr{A}$. For instance, this condition is satisfied when $w\in\mathscr{A}^\times$, as we have seen. 

The following is a simple consequence of the definitions.
\begin{prop}Equivalent representations give isomorphic equivariant hom-Lie algebras. 
\end{prop}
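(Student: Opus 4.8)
The plan is to read an equivalence of representations as a $G$-equivariant isomorphism of sheaves of $\mathscr{O}_X$-algebras, $\phi:\mathscr{A}\xrightarrow{\sim}\mathscr{A}'$, i.e.\ an $\mathscr{O}_X$-algebra isomorphism satisfying $\phi\circ\sigma=\sigma\circ\phi$ for every $\sigma\in G$. Because the product $\llangle a\cdot D_w,b\cdot D_w\rrangle=(\sigma(a)D_w(b)-\sigma(b)D_w(a))\cdot D_w$ uses both the multiplication on $\mathscr{A}$ and the $G$-action, it is exactly this combined structure that must be transported, which is why the relevant equivalence is one of algebra-representations and not merely of underlying modules. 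Setting $w':=\phi(w)$, so that $D_{w'}=w'(\id-\sigma)$ acts on $\mathscr{A}'$, I would propose the candidate morphism $F:\mathscr{A}\cdot D_w\to\mathscr{A}'\cdot D_{w'}$ determined on local sections by $F(a\cdot D_w):=\phi(a)\cdot D_{w'}$, together with $\psi=\id_G$; its inverse should be the map induced by $\phi^{-1}$, so that once $F$ is shown to be a morphism it is automatically an isomorphism.

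The heart of the argument is the single intertwining identity $\phi\circ D_w=D_{w'}\circ\phi$, which I would establish by direct computation: for a local section $a$,
\[
\phi(D_w(a))=\phi\big(w(a-\sigma(a))\big)=\phi(w)\big(\phi(a)-\phi(\sigma(a))\big)=w'\big(\phi(a)-\sigma(\phi(a))\big)=D_{w'}(\phi(a)),
\]
using that $\phi$ is a ring map and is $G$-equivariant. Every remaining verification then follows formally from this one relation.

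With the intertwining identity in hand I would check the conditions of Definition \ref{dfn:globhom} with $\psi=\id$. First, well-definedness of $F$: if $a\cdot D_w=0$ as an operator on $\mathscr{A}$, then for any local section $c$ of $\mathscr{A}'$ one has $\phi(a)D_{w'}(c)=\phi(a)\phi(D_w(\phi^{-1}(c)))=\phi\big(a\cdot D_w(\phi^{-1}(c))\big)=0$, so $F$ is independent of the chosen representative. Second, $F$ is $\mathscr{O}_X$-linear since $\phi$ is, and it satisfies the natural $\phi$-semilinearity over $\mathscr{A}$, $F(c\cdot(a\cdot D_w))=\phi(c)\cdot F(a\cdot D_w)$; moreover $G$-equivariance is immediate because $\sigma$ acts by $\sigma(a\cdot D_w)=\sigma(a)\cdot D_w$ and $\phi\sigma=\sigma\phi$, giving $F\circ\sigma=\sigma\circ F$. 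Third, bracket preservation: pushing $\phi$ through the multiplication and through the intertwining identity yields
\[
F\llangle a\cdot D_w,b\cdot D_w\rrangle=\big(\sigma(\phi(a))D_{w'}(\phi(b))-\sigma(\phi(b))D_{w'}(\phi(a))\big)\cdot D_{w'}=\llangle F(a\cdot D_w),F(b\cdot D_w)\rrangle.
\]
Since this holds uniformly over all $\sigma\in G$, and since the structure scalars transform correctly, $\phi(q_\sigma)=q_\sigma'$ (for instance $\phi(w/\sigma(w))=w'/\sigma(w')$ in the invertible case treated in Example \ref{exam:homLietwist}), the pair $(F,\id)$ is a morphism, hence an isomorphism, of equivariant hom-Lie algebras.

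I expect the only genuine subtlety worth stating carefully to be the well-definedness of $F$ in the presence of relations inside $\mathscr{A}\cdot D_w\subseteq\SDer_\sigma(\mathscr{A})$, combined with the fact that source and target are modules over the two distinct (though isomorphic) algebras $\mathscr{A}$ and $\mathscr{A}'$, so that $F$ is $\mathscr{A}$-semilinear rather than $\mathscr{A}$-linear; this is precisely what the $\psi$-component of a morphism in Definition \ref{dfn:globhom} is designed to accommodate. The rest is the routine sheaf-on-sections bookkeeping that justifies the statement being a \emph{simple consequence of the definitions}.
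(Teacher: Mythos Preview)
Your proposal is correct and follows essentially the same route as the paper: establish the intertwining identity $\phi\circ D_w=D_{\phi(w)}\circ\phi$ from the algebra-map and equivariance properties of $\phi$, then push $\phi$ through the bracket formula. The paper writes the two actions as $\rho(\sigma)$ and $\rho'(\sigma)$ (so the intertwining reads $f\circ\rho(\sigma)=\rho'(\sigma)\circ f$) rather than using the same symbol $\sigma$ on both sides, and it omits your extra bookkeeping on well-definedness and the $q_\sigma$'s, but the argument is otherwise identical.
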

\begin{proof}Suppose $\rho\sim \rho'$. This means that for every $U\subseteq X$ there is an isomorphism $f: \mathscr{A}(U)\to \mathscr{A}(U)$ of $\mathscr{O}_X$-algebras such that $f\circ \rho(\sigma)=\rho'(\sigma)\circ f$. Now, $$f\circ \rho(\sigma)(a)=\rho'(\sigma)\circ f(a),$$ whence
\begin{multline*}
f\circ D^{\rho}_w=f\big(w(\id-\rho(\sigma))\big)=f(w)\big(f-\rho'(\sigma)\circ f\big)=
f(w)(\id-\rho'(\sigma))\circ f=D_{f(w)}^{\rho'}.
\end{multline*}From this we calculate
\begin{align*}
	f\llangle aD_w^{\rho},bD_w^{\rho}\rrangle^\rho &=f\big(a^{\rho(\sigma)}D_w^\rho(b)-b^{\rho(\sigma)}D_w^\rho(a)\big)\\
	&=\big(f(a)^{\rho'(\sigma)}D_{f(w)}^{\rho'}(f(b))-f(b)^{\rho'(\sigma)}D_{f(w)}^{\rho'}(f(a))\big)D_{f(w)}^{\rho'}\circ f\\
	&=\llangle f(aD_w^\rho),f(bD_w^\rho)\rrangle^{\rho'},
\end{align*}finishing the proof.
\end{proof}

\subsection{Hom-Lie algebraization}
Let $X_{/S}$ be an $S$-scheme. We introduce the category $\Com^G(X_{/S})$ of quasi-coherent $\mathscr{O}_X$-algebras $\mathscr{A}$ with an action of a group $G\subseteq\End(\mathscr{A})$. Morphisms are given by 
pairs $(f,\psi)$, where $f:\mathscr{A}\to \mathscr{B}$ is a $\mathscr{O}_X$-algebra sheaf morphism and $\psi: G_A\to G_B$ a group morphism, such that $f\circ
g=\psi(g)\circ f$. By $\Com^-(X_{/S})$ we denote the category
obtained by considering as objects pairs $(\mathscr{A},\sigma)$ for
$\sigma_\mathscr{A}\in G_A$ and morphisms $f:\mathscr{A}\to\mathscr{B}$ with $f\circ \sigma_\mathscr{A}=\sigma_\mathscr{B}\circ
f$.
\begin{thm}\label{thm:AltProduct}Let $(\mathscr{A},G)\in\ob(\Com^G(X_{/S}))$.
  Then, for $\sigma\in G$, the product
\begin{equation}\label{eq:homCom}
\llangle a\cdot
\mathbf{e},b\cdot\mathbf{e}\rrangle_\sigma:=(\sigma(a) b-\sigma(b) a)\cdot\mathbf{e}, \quad a,b\in \mathscr{A}(U),\quad U\subseteq X,
\end{equation} defines a hom-Lie
  algebra product 
  on $\mathscr{A}\cdot\mathbf{e}$, where $\mathbf{e}$ is some global basis of $\mathscr{A}$ as $\mathscr{A}$-module. This defines a functor $\underline{hL}(?)_\sigma:
  \Com^-(X_{/S})\to \mathsf{HomLie}$ given by $\mathscr{A}\mapsto \mathscr{A}\cdot\mathbf{e}$, with
  product defined by (\ref{eq:homCom}). The collection of functors
  $\underline{hL}(?)_G:=\amalg_{\sigma \in G} \underline{hL}(?)_\sigma$ defines a
  functor $$\begin{array}{llll}
\underline{hL}(?)_G:&  \Com^G(X_{/S})&\longrightarrow &\mathsf{EquiHomLie}\\
&(\mathscr{A},G)&\longmapsto & \underline{hL}(\mathscr{A})_G.\end{array}$$
\end{thm}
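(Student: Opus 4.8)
The plan is to verify the theorem in three stages: first that \eqref{eq:homCom} really is a hom-Lie algebra product for each fixed $\sigma$, then that the assignment $\mathscr{A}\mapsto \mathscr{A}\cdot\ee$ is functorial on $\Com^-(X_{/S})$, and finally that assembling over all $\sigma\in G$ yields a functor on $\Com^G(X_{/S})$ landing in $\mathsf{EquiHomLie}$. The first stage should reduce to an instance of the machinery already in place. Observe that the product \eqref{eq:homCom} is exactly the bracket \eqref{eq:prodtwist} specialized to the \emph{trivial} twisted derivation $\Delta_\sigma=\id$ (so $\partial=\id-\sigma$ in the guise where $w=\one$), since $\sigma(a)\partial(b)-\sigma(b)\partial(a)$ with $\partial=\id$ gives $\sigma(a)b-\sigma(b)a$ after absorbing the formal basis $\ee$. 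Thus I would first check that the commutation hypothesis \eqref{eq:qsigmapartial}, namely $\partial\circ\sigma=q\,\sigma\circ\partial$, holds with $q=\id$: here $\partial=\id$ so $\partial\circ\sigma=\sigma=\id\cdot\sigma\circ\partial$ trivially. With this, Theorem \ref{thm:twistprod} applies directly and delivers (hL1.) [skew-symmetry, i.e. property (ii)] and (hL2.) [the twisted Jacobi identity, property (iii)] with $q_\sigma=\id$, so each individual bracket is a strict hom-Lie algebra. Alternatively, if one prefers not to route through $\Delta_\sigma$, the identities can be verified by a direct cyclic-sum computation using only that $\sigma$ is a ring endomorphism and that $\mathscr{A}$ is commutative; skew-symmetry is immediate from the antisymmetric form of \eqref{eq:homCom}, and the Jacobi identity follows by expanding $\circlearrowleft_{a,b,c}\llangle\sigma(a)\ee,\llangle b\ee,c\ee\rrangle\rrangle$ and collecting terms.

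Next I would establish functoriality of $\underline{hL}(?)_\sigma$ on $\Com^-(X_{/S})$. A morphism there is an $\mathscr{O}_X$-algebra morphism $f:\mathscr{A}\to\mathscr{B}$ intertwining the distinguished endomorphisms, $f\circ\sigma_\mathscr{A}=\sigma_\mathscr{B}\circ f$. I set $\underline{hL}(f)_\sigma(a\cdot\ee):=f(a)\cdot\ee$ and check it respects brackets: applying $f$ to \eqref{eq:homCom} and using the intertwining relation together with $f$ being a ring morphism gives
\begin{align*}
f\big(\sigma_\mathscr{A}(a)b-\sigma_\mathscr{A}(b)a\big)
&=f(\sigma_\mathscr{A}(a))f(b)-f(\sigma_\mathscr{A}(b))f(a)\\
&=\sigma_\mathscr{B}(f(a))f(b)-\sigma_\mathscr{B}(f(b))f(a),
\end{align*}
which is precisely the bracket of $f(a)\cdot\ee$ and $f(b)\cdot\ee$ in $\mathscr{B}\cdot\ee$. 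Preservation of identities and composition is then routine, so $\underline{hL}(?)_\sigma$ is a functor into $\mathsf{HomLie}$.

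Finally, for the equivariant statement I would take a morphism $(f,\psi):(\mathscr{A},G_A)\to(\mathscr{B},G_B)$ in $\Com^G(X_{/S})$ and form $\underline{hL}(f,\psi)_G:=\coprod_{\sigma\in G_A}\underline{hL}(f)_\sigma$, pairing the $\sigma$-component with the $\psi(\sigma)$-component on the target. The condition $f\circ g=\psi(g)\circ f$ guarantees that $f$ simultaneously intertwines \emph{every} $\sigma$ with $\psi(\sigma)$, so the same computation as above shows $f\circ\sigma=\psi(\sigma)\circ f$ and $f(\llangle\cdot,\cdot\rrangle_\sigma)=\llangle f(\cdot),f(\cdot)\rrangle_{\psi(\sigma)}$; this is exactly the data of a morphism of equivariant hom-Lie algebras in the sense of Definition \ref{dfn:globhom}, with the compatibility $f(q_\sigma)=q_{\psi(\sigma)}$ holding trivially since all $q_\sigma=\id$. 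I expect the only genuine subtlety — the main obstacle — to be bookkeeping at the sheaf level: the bracket and the functors are defined sectionwise on open affines $U\subseteq X$, and one must confirm that the construction is compatible with restriction maps (so that it glues to a genuine sheaf morphism) and that the global basis $\ee$ behaves coherently across the cover. Since everything is $\mathscr{O}_X$-linear and $G$ acts by $\mathscr{O}_S$-linear sheaf endomorphisms, these compatibilities are formal, but they are the point where one should be careful rather than the algebraic identities themselves.
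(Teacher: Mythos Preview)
Your overall architecture matches the paper's: verify the bracket via Theorem~\ref{thm:twistprod} (or by direct cyclic expansion), then check functoriality, which the paper dismisses as ``follows easily'' but which you spell out correctly. The functoriality arguments in your second and third stages are fine and more detailed than the paper's.

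There is, however, a genuine slip in your invocation of Theorem~\ref{thm:twistprod}. You propose to take $\Delta_\sigma=\id$ and $\partial=\id$, but the identity map is \emph{not} a $\sigma$-twisted derivation: $\id(ab)=ab$, whereas $\id(a)b+\sigma(a)\id(b)=ab+\sigma(a)b$, and these agree only when $\sigma(a)=0$. So the hypotheses of Theorem~\ref{thm:twistprod} are not met and you cannot appeal to it with that choice. The correct specialization --- which your own parenthetical ``$\partial=\id-\sigma$ in the guise where $w=\one$'' already names, and which is exactly what the paper uses --- is $\Delta_\sigma=\partial=\id-\sigma$. With this choice formula~(i) of Theorem~\ref{thm:twistprod} gives
\[
\sigma(a)\bigl(b-\sigma(b)\bigr)-\sigma(b)\bigl(a-\sigma(a)\bigr)=\sigma(a)b-\sigma(b)a,
\]
the cross-terms $\sigma(a)\sigma(b)$ cancelling by commutativity of $\mathscr{A}$; and the commutation hypothesis $\partial\circ\sigma=q\,\sigma\circ\partial$ still holds with $q=1$ since $(\id-\sigma)\circ\sigma=\sigma-\sigma^2=\sigma\circ(\id-\sigma)$. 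Once you make this correction your route through Theorem~\ref{thm:twistprod} is the paper's route, and your alternative direct-computation option is the other method the paper mentions.
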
We will identify $\mathscr{A}\cdot\mathbf{e}$ and $\mathscr{A}$, saying that
  $\llangle\,\cdot,\cdot\,\rrangle_\sigma$ is a hom-Lie algebra on
  $A$. Notice also that $e\in G$ gives the abelian hom-Lie algebra. 
\begin{proof}Clearly, (\ref{eq:homCom}) defines a well-defined, skew-symmetric
  product. The only thing to check is the (twisted) Jacobi
  identity. The simplest (but tedious) proof of this is by direct 
  computation. Alternatively, one could use Theorem \ref{thm:twistprod} with 
  $\Delta_\sigma=\partial=(\id-\sigma)$. The statement concerning functoriality
  follows easily. 
\end{proof}
Notice that we avoid the slightly awkward assumptions on annihilators and $q$-commutativity of $\sigma$ and $\partial_\sigma$ needed for Theorem \ref{thm:twistprod} here. 

\begin{prop}\label{prop:isoCom}The hom-Lie algebras $\underline{hL}(\mathscr{A})_\sigma$ and
  $\mathscr{A}\cdot\partial$, for $\partial:=(\id-\sigma)$, are canonically isomorphic as hom-Lie algebras. 
\end{prop}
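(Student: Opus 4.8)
The plan is to observe that, once the twisted derivation $\Delta_\sigma$ in Theorem \ref{thm:twistprod} is specialized to $\partial=\id-\sigma$, the bracket it produces on $\mathscr{A}\cdot\partial$ collapses, by commutativity of $\mathscr{A}$, to the commutator-type bracket (\ref{eq:homCom}) defining $\underline{hL}(\mathscr{A})_\sigma$; the isomorphism is then the tautological identification of the two distinguished generators.

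First I would write out the bracket on $\mathscr{A}\cdot\partial$. Here $M=\mathscr{A}$, so the restriction of $\Delta_\sigma=\partial=\id-\sigma$ to $\mathscr{A}$ is again $\id-\sigma$, and Theorem \ref{thm:twistprod}(i) reads
$$\llangle a\cdot\partial,b\cdot\partial\rrangle=\big(\sigma(a)(b-\sigma(b))-\sigma(b)(a-\sigma(a))\big)\cdot\partial.$$
Expanding the right-hand side yields $\sigma(a)b-\sigma(a)\sigma(b)-\sigma(b)a+\sigma(b)\sigma(a)$; since $\mathscr{A}$ is commutative the terms $\sigma(a)\sigma(b)$ and $\sigma(b)\sigma(a)$ cancel, leaving $(\sigma(a)b-\sigma(b)a)\cdot\partial$. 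This is exactly the right-hand side of (\ref{eq:homCom}) with the abstract generator $\mathbf{e}$ replaced by $\partial$. I would also note that the hypotheses of Theorem \ref{thm:twistprod} are automatic here: the $q$-commutativity (\ref{eq:qsigmapartial}) holds with $q=1$, because $\partial\circ\sigma=\sigma-\sigma^2=\sigma\circ\partial$, and the annihilator condition is harmless under the standing domain assumption. In any case, the bracket being literally the $\underline{hL}$ bracket of Theorem \ref{thm:AltProduct} already certifies that it is a hom-Lie bracket.

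Next I would define $f\colon\underline{hL}(\mathscr{A})_\sigma=\mathscr{A}\cdot\mathbf{e}\to\mathscr{A}\cdot\partial$ to be the $\mathscr{A}$-linear map sending the distinguished generator $\mathbf{e}$ to $\partial$, i.e.\ $a\cdot\mathbf{e}\mapsto a\cdot\partial$. Both sides are free of rank one on these generators, so $f$ is an isomorphism of $\mathscr{A}$-modules with inverse $\partial\mapsto\mathbf{e}$. By the computation of the previous paragraph $f$ intertwines the two brackets verbatim. For the equivariant structure, recall that $\sigma$ acts on both modules by $\sigma(a\cdot\mathbf{e})=\sigma(a)\cdot\mathbf{e}$ and $\sigma(a\cdot\partial)=\sigma(a)\cdot\partial$, whence $f\circ\sigma=\sigma\circ f$; thus $(f,\id)$ is a morphism in the sense of Definition \ref{dfn:globhom}, and since its inverse is of the same form, $f$ is an isomorphism of (equivariant) hom-Lie algebras.

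The only genuinely substantive point is the cancellation in the first step: it is precisely the commutativity of $\mathscr{A}$ that reduces the twisted-derivation bracket to the commutator-type bracket (\ref{eq:homCom}). After that there is no real obstacle. Canonicity is automatic, as $f$ is forced by matching the two distinguished generators and involves no auxiliary choices; the same argument sheafifies over the open affines $U\subseteq X$, giving the statement in the global setting as well.
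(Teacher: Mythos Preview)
Your argument is correct and is precisely the approach the paper has in mind: the paper's proof simply names the map $a\cdot\mathbf{e}\mapsto a\cdot\partial$ and declares the verification easy, and you have carried out that verification (the cancellation via commutativity of $\mathscr{A}$ is exactly the point, and is also what the paper alludes to in the proof of Theorem~\ref{thm:AltProduct} when it says one could alternatively invoke Theorem~\ref{thm:twistprod} with $\Delta_\sigma=\id-\sigma$). One small caveat: your assertion that $\mathscr{A}\cdot\partial$ is free of rank one tacitly uses that $\Ann(\partial)=0$, which need not hold for arbitrary $\mathscr{A}$; the paper glosses over this too, so you are in no worse shape, but strictly speaking the map is a priori only a surjection unless one assumes $\mathscr{A}$ is a domain (or otherwise ensures $\Ann(\id-\sigma)=0$).
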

\begin{proof}The (obvious) isomorphism is $a\cdot\mathbf{e}\mapsto
  a\partial$. It is easy to check that this is indeed an isomorphism of hom-Lie algebras. 
\end{proof}
\section{Enveloping algebras}
For certain types of hom-Lie algebras we can associate a canonical associative algebra, analogous to enveloping algebras for Lie algebras. This holds in particular for hom-Lie algebras of twisted derivations on finitely generated algebras as we will now see. We do this in the affine case (globalizing should pose no problem).

Let $A$ be a (commutative) ring and $B$ a finitely generated (commutative) $A$-algebra 
$$A[y_1,y_2,\dots,y_n]\twoheadrightarrow B.$$ Let furthermore $\sigma$ be an $A$-algebra morphism and form $\Delta_\sigma:=\beta\cdot(\id-\sigma)\in \Der_{A,\sigma}(B)$, for some $\beta\in B$. Then Theorem \ref{thm:twistprod} endows $B\cdot\Delta_\sigma$ with the structure of a hom-Lie algebra. It is clear that the elements $$\mathbf{E}^{\underline{k}}:=y_1^{k_1}y_2^{k_2}\cdots y_n^{k_n}\cdot \Delta_\sigma, \qquad \underline{k}\in\mathbb{Z}_{\geq 0}^n$$ is a basis for $B\cdot\Delta_\sigma$ as an $A$-module. Then we have the relations
$$(\mathbf{E}^{\underline{k}})^\sigma\cdot\mathbf{E}^{\underline{l}}\,\,-\,\,
(\mathbf{E}^{\underline{l}})^\sigma\cdot \mathbf{E}^{\underline{k}}=\llangle\mathbf{E}^{\underline{k}},\mathbf{E}^{\underline{l}}\rrangle,$$ so we can form
$$\mathcal{E}(B\cdot \Delta_\sigma):=\frac{A\big\{\mathbf{E}^{\underline{k}}\mid \underline{k}\in \mathbb{Z}_{\geq 0}^n\big\}}{\Big((\mathbf{E}^{\underline{k}})^\sigma\cdot\mathbf{E}^{\underline{l}}\,-\,
(\mathbf{E}^{\underline{l}})^\sigma\cdot \mathbf{E}^{\underline{k}}\,-\,\llangle\mathbf{E}^{\underline{k}},\mathbf{E}^{\underline{l}}\rrangle\Big)}.$$Obviously this is in general a very complicated algebra because it is infinitely presented. Things simplify considerably if $B$ is finite as an $A$-module. 

So assume from now on that $B$ is a finite $A$-algebra:
$$B=Ay_1\oplus Ay_2\oplus\cdots\oplus Ay_n$$ and put
$$\eps_i:=y_i\cdot \Delta_\sigma.$$Then 
\begin{equation}\label{eq:env}
\mathcal{E}(B\cdot \Delta_\sigma):=\frac{A\big\{\eps_1,\eps_2,\dots,\eps_n\big\}}{\Big((\eps_i)^\sigma\cdot\eps_j-\,
(\eps_j)^\sigma\cdot\eps_i-\,\llangle\eps_i,\eps_j\rrangle\Big)}.
\end{equation}This was used in \cite{LaSi} to deform the Lie algebra $\mathfrak{sl}_2(k)$. Since it is illustrative and very helpful to have this construction in the back of ones mind in what comes, we will make a long story short and sketch that construction.
\subsection{Jackson $\mathfrak{sl}_2(k)$}
It is well-known that $\mathfrak{sl}_2(k)$ can be represented as differential operators on $k[t]$ as 
$$e=D_t,\quad h=-2tD_t, \quad  f=-t^2D_t.$$ Taking a $\sigma\in \mathrm{Aut}_k(k[t])$ we can ``deform'' this representation to generators
$$e=\Delta_\sigma,\quad h=-2t\Delta_\sigma,\quad f=-t^2\Delta_\sigma,$$for some $\sigma$-derivation $\Delta_\sigma$. Once again, Theorem \ref{thm:twistprod}, can be used to endow $k[e,h,f]$ with a hom-Lie algebra structure (under some restrictions on $\sigma$ to ensure closure of the bracket). For instance, suppose that $\sigma(t)=s_0+s_1t$ and $\Delta_\sigma(t)=1$, we can calculate the brackets to be 
$$\llangle h,e\rrangle = 2 e,\qquad \llangle h,f\rrangle = -s_0 h-2s_1 f, \qquad \llangle e,f\rrangle = -s_0 e + \frac{s_1+1}{2} h.$$ In \cite{LaSi} we called this algebra the \emph{Jackson $\mathfrak{sl}_2(k)$}, which we here denote by $\mathfrak{sl}_{2,J}(k)$.  Now, using the relation in (\ref{eq:env}) we can calculate 
\begin{align}\label{eq:slenv}
\begin{split}
	-2s_0 e^2+s_1he-eh & = 2e\\
	-2s_0ef+s_1hf+s_0^2eh-s_0s_1h^2-s_1^2fh & =  -s_0 h-2s_1 f\\
	ef+s_0^2e^2-s_0s_1he-s_1^2fe &= 	 -s_0 e + \frac{s_1+1}{2} h.
	\end{split}
	\end{align}Let me calculate the left-hand side of the first relation in  (\ref{eq:slenv}) to show the technique. 

According to Theorem \ref{thm:twistprod} what we need to compute is
$$\sigma(-2t)\Delta_\sigma(\Delta_\sigma)-\sigma(1)\Delta_{\sigma}(-2t\Delta_\sigma),$$since $e=\Delta_\sigma$ and $h=-2t\Delta_\sigma$. Expanding this we get 
\begin{multline*}
-2\sigma(t)\Delta_\sigma(\Delta_\sigma)+2\Delta_\sigma(-2t\Delta_\sigma)
=-2(s_0+s_1t)\Delta_\sigma(\Delta_\sigma)-\Delta_\sigma(-2t\Delta_\sigma)\\
=-2s_0\Delta_\sigma(\Delta_\sigma)-2s_1t\Delta_\sigma(\Delta_\sigma)-\Delta_\sigma(-2t\Delta_\sigma)=
-2s_0e^2+s_1he-eh.
\end{multline*}The other relations are calculated in the same way. 
	Relations (\ref{eq:slenv}) mean that 
	$$\mathcal{E}(\mathfrak{sl}_{2,J}(k))=\frac{k\{e,h,f\}}{\begin{pmatrix}
	2s_0 e^2+s_1he-eh -2e\\
	-2s_0ef+s_1hf+s_0^2eh-s_0s_1h^2-s_1^2fh +s_0 h+2s_1 f\\
	ef+s_0^2e^2-s_0s_1he-s_1^2fe +s_0 e - \frac{s_1+1}{2} h
	\end{pmatrix}}$$The most interesting case is perhaps when $s_0=0$, so $\sigma(t)=s_1t =qt$, then we get 
	$$\mathcal{E}(\mathfrak{sl}_{2,q}(k))=\frac{k\{e,h,f\}}{\begin{pmatrix}
	he-q^{-1}eh -2q^{-1}e\\
	hf-qfh +2q f\\
	ef-q^2fe  - \frac{q+1}{2} h
	\end{pmatrix}}$$This algebra has some remarkable properties as we will see later. Notice the similarity to the universal enveloping algebra of the ordinary $\mathfrak{sl}_2(k)$. For more details and much more see \cite{LaSi}. 
	
Notice that we really don't use that $\Delta_\sigma$ is a $\sigma$-derivation when computing the left-hand side of (\ref{eq:slenv}). Therefore, we can simplify computations by using the $\underline{hL}$-construction since this is equivalent to using $\Delta_\sigma=\id-\sigma$. 

\section{Arithmetic hom-Lie algebras}
\subsection{Definitions}
Suppose $\Lambda=\Lambda_{/\mathbb{Z}}$ is an excellent, regular, noetherian, integral domain. Then we say that $\Lambda$ is an \emph{arithmetic ring} (cf. \cite{GilletSoule}). A finitely generated, flat $\Lambda$-algebra $A$, for $\Lambda$ an arithmetic ring, is called an \emph{arithmetic algebra} (over $\Lambda$). 

Next, we define an \emph{arithmetic scheme} to be a separated, flat finite type morphism $X\to\underline{\Lambda}$ with $X$ integral and normal and $\Lambda$ an arithmetic ring. 
\begin{dfn}When $X$ is an arithmetic scheme, we refer to (equivariant) hom-Lie  algebras on $X$ as \emph{arithmetic {\rm(}equivariant{\rm )} hom-Lie algebras}.
\end{dfn}

\subsection{Corollaries}
The following corollary is an immediate consequence of theorem \ref{thm:invsheaf}.
\begin{corollary}\label{thm:TwistDerAritScheme}Let $X\to \underline{\Lambda}$ be an arithmetic scheme together with an action of a group $G$. Furthermore, let $\mathscr{E}$ be a $G$-equivariant, torsion-free sheaf on $X$. Then $\SDer_\sigma(\mathscr{E})$ is an invertible sheaf and
	$$G\setminus\{\id\}\to \mathrm{Pic}(X)\xrightarrow{\simeq} \mathrm{CDiv}(X),\quad \sigma \mapsto \SDer_\sigma(\mathscr{E})$$ associates to an element of $G$ an invertible sheaf and thus also an effective Cartier divisor. The image of $G$, together with the identity, generates a subgroup of $\mathrm{Pic}(X)\xrightarrow{\simeq} \mathrm{CDiv}(X)$.
\end{corollary}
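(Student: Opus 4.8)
The plan is to obtain the corollary as a direct specialization of Theorem \ref{thm:invsheaf}, the real work being to check that an arithmetic scheme equipped with a group action satisfies that theorem's hypotheses. First I would unwind the definition of an arithmetic scheme $X\to\underline{\Lambda}$: by construction $X$ is integral, so $f$ is an integral $S$-scheme with $S=\underline{\Lambda}$, and $X$ is normal. As in Theorem \ref{thm:invsheaf} I take $G$ to be finite acting $\Oc_S$-linearly over $S$; finiteness is precisely what makes $\mathrm{Inert}_G(X)=\bigcup_{\varsigma\in G}\mathrm{Inert}_\varsigma(X)$ a finite union. The one substantive hypothesis to verify is that $\mathrm{Inert}_G(X)$ is a closed subscheme, which reduces to the standard fact that for a finite group acting on a Noetherian scheme the locus on which a fixed $\varsigma$ induces the identity on residue fields is closed, a finite union of closed sets again being closed.

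Granting this, Theorem \ref{thm:invsheaf}(a) immediately yields that $\SDer_\sigma(\mathscr{E})$ is invertible on $Y:=X\setminus\mathrm{Inert}_G(X)$ for every $\sigma\in G\setminus\{\id\}$. I exclude the identity because $\SDer_{\id}(\mathscr{E})=\Der(\mathscr{E})$ is not of rank one (as recorded in the remark following the theorem), so $\id$ cannot map into $\mathrm{Pic}(X)$; this is why the association is defined only on $G\setminus\{\id\}$. The remaining content is the passage from invertibility on $Y$ to invertibility on $X$. Here I would use normality through Serre's criterion: $X$ being normal is $R_1$, so every codimension-one point of $X$ has a regular local ring, hence a discrete valuation ring and in particular a UFD. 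At such points Theorem \ref{thm:UFD} shows $\SDer_\sigma(\mathscr{E})$ is generated by the single element $(\id-\sigma)/g$, hence is invertible at every codimension-one point, which pins down the effective Weil divisor attached to $\sigma$; promoting this to an actual Cartier divisor on all of $X$ is the delicate point addressed below.

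The two formal conclusions then follow as in the theorem. Since $X$ is integral, \cite[Prop. II.6.15]{Hartshorne} supplies the isomorphism $\mathrm{Pic}(X)\xrightarrow{\simeq}\mathrm{CDiv}(X)$, and because $\SDer_\sigma(\mathscr{E})$ is locally generated by one element it corresponds to an \emph{effective} Cartier divisor by \cite[Rem. II.6.17.1]{Hartshorne}. That the image of $G\setminus\{\id\}$ together with $\id$ generates a subgroup of $\mathrm{Pic}(X)\cong\mathrm{CDiv}(X)$ is then automatic, being the subgroup generated by a subset.

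The hard part will be the extension step. For an $X$ that is merely normal and not regular, a line bundle on $Y$ need not extend to a line bundle across a closed set of codimension $\geq 2$, so one cannot simply fill in the inertia locus; normality alone only guarantees a well-defined Weil divisor, i.e.\ a class in the divisor class group, and invertibility (membership in $\mathrm{Pic}(X)$) at the higher-codimension strata of $\mathrm{Inert}_G(X)$ genuinely requires more. The clean fix is to import the extra hypothesis of Theorem \ref{thm:invsheaf}(b), namely that $\mathrm{Inert}_G(X)$ is regular, whereupon the local rings along it are UFDs and Theorem \ref{thm:UFD} applies there as well. For the stated $\mathrm{Pic}$/$\mathrm{CDiv}$ conclusion it in fact suffices to control the codimension-one part, which is handled by the $R_1$ argument above, so I would phrase the proof so that only the divisorial contribution to the inertia locus is ever used.
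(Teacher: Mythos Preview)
Your approach is the same as the paper's: the paper's entire proof is the single sentence ``The following corollary is an immediate consequence of theorem \ref{thm:invsheaf}.'' You are doing exactly this, only with far more care about which hypotheses of that theorem are actually satisfied by an arithmetic scheme.

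The difference is that you take seriously the extension step from $Y=X\setminus\mathrm{Inert}_G(X)$ to all of $X$, whereas the paper simply asserts the corollary. You are right to flag this: the definition of arithmetic scheme requires $X$ to be integral and normal, not regular, and Theorem \ref{thm:invsheaf}(b) as stated asks for regularity of $\mathrm{Inert}_G(X)$ (in the sense, per its proof, that the local rings of $X$ at those points are regular, hence UFDs, so that Theorem \ref{thm:UFD} applies). Your $R_1$ argument handles the codimension-one part of the inertia locus cleanly, which is enough to produce a well-defined effective Weil divisor; but, as you note, promoting this to an honest element of $\mathrm{Pic}(X)$ across higher-codimension inertia strata is not automatic on a merely normal scheme. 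The paper does not address this point at all. So your proposal is not only aligned with the paper's proof but is in fact a more honest version of it, correctly isolating where an additional hypothesis (regularity along the inertia locus, or of $X$ itself) would be needed to make the $\mathrm{Pic}(X)$ conclusion literally true rather than only a $\mathrm{Cl}(X)$ statement.
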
 Once again, it would obviously be interesting to know what subgroup $G$ generates inside $\mathrm{Pic}(X)$.
\begin{corollary}\label{cor:AlgIntTwistDer}Let $L/K$ be a Galois extension of number fields and let $\mathcal{J}$ be a Galois stable fractional ideal. The morphism $\Spec(\mathfrak{o}_L)\to \Spec(\mathfrak{o}_K)$ together with $\mathcal{J}\in\mathrm{Pic}(\mathfrak{o}_L)$, satisfies the assumptions of the previous corollary. Thus, for every $\sigma\in\Gal(L/K)$, $\SDer_\sigma(\mathcal{J})$ is a fractional ideal. 
Furthermore, $\SDer_{\sigma}(\mathcal{J})$ is a Galois module for every $\sigma\in\Gal(L/K)$ and every fractional ideal $\mathcal{J}$. 
\end{corollary}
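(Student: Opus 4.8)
The plan is to proceed in three stages: first verify that the hypotheses of Corollary~\ref{thm:TwistDerAritScheme} hold, then read off invertibility and translate it into the language of fractional ideals, and finally pin down the Galois-module structure.

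First I would check that $\Spec(\mathfrak{o}_L)\to\underline{\Lambda}$ (with $\Lambda=\Z$, or $\Lambda=\mathfrak{o}_K$) is an arithmetic scheme and that $(\mathcal{J},\Gal(L/K))$ is admissible data. The ring $\mathfrak{o}_L$ is a Dedekind domain, hence an integrally closed Noetherian integral domain, so $\Spec(\mathfrak{o}_L)$ is integral and normal; the structure morphism is finite type and separated, and it is flat because $\mathfrak{o}_L$ is a finitely generated torsion-free module over the Dedekind domain $\mathfrak{o}_K$ (hence projective, hence flat). The group $G:=\Gal(L/K)$ acts on $\mathfrak{o}_L$ fixing $\mathfrak{o}_K$, so the action is $\Oc_S$-linear and over $S=\Spec(\mathfrak{o}_K)$. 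A fractional ideal is a nonzero $\mathfrak{o}_L$-submodule of $L$, hence torsion-free, and Galois-stability of $\mathcal{J}$ is exactly the $G$-equivariance demanded by Theorem~\ref{thm:invsheaf}. It remains to control the inertia locus: only finitely many primes of $\mathfrak{o}_L$ ramify in $L/K$, so $\mathrm{Inert}_G(X)$ is a finite set of closed points, in particular a closed subscheme; and since $\mathfrak{o}_L$ is regular, every local ring is a DVR and hence a UFD, so the regularity hypothesis of Theorem~\ref{thm:invsheaf}(b) holds and one may invoke Theorem~\ref{thm:UFD} to extend across the ramified primes. Corollary~\ref{thm:TwistDerAritScheme} then yields that $\SDer_\sigma(\mathcal{J})$ is invertible on all of $\Spec(\mathfrak{o}_L)$, for every $\sigma\in G$.

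Next I would deduce that $\SDer_\sigma(\mathcal{J})$ is a fractional ideal. Over a Dedekind domain every invertible (equivalently, rank-one projective) module is isomorphic to a fractional ideal, which already suffices; but I would prefer the concrete realization coming from Lemma~\ref{thm:classtwistder} and Proposition~\ref{prop:SigmaDerLocal}. Localizing at each prime, every $\sigma$-twisted derivation of $\mathcal{J}$ is of the form $c\,(\id-\sigma)$ with $c$ ranging over a fractional ideal $\mathfrak{a}_\sigma\subseteq L$, and since $\mathcal{J}\subseteq L$ is torsion-free with $\sigma\neq\id$ the assignment $c\,(\id-\sigma)\mapsto c$ is an $\mathfrak{o}_L$-module isomorphism $\SDer_\sigma(\mathcal{J})\xrightarrow{\sim}\mathfrak{a}_\sigma$. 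This exhibits $\SDer_\sigma(\mathcal{J})$ explicitly as a fractional ideal and, crucially, fixes a realization inside $L$ on which the Galois group can act.

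Finally, for the Galois-module structure I would let $G$ act by transport of structure, $\tau\cdot\partial:=\tau\circ\partial\circ\tau^{-1}$, which is semilinear over $\mathfrak{o}_L$ and which, under the identification above, is the coefficientwise action $\tau(c\,(\id-\sigma))=\tau(c)\,(\id-\tau\sigma\tau^{-1})$. \textbf{Here lies the one genuine subtlety, which I expect to be the main obstacle.} A direct computation shows $\tau\circ\partial\circ\tau^{-1}$ is a $(\tau\sigma\tau^{-1})$-twisted derivation, so conjugation carries $\SDer_\sigma(\mathcal{J})$ isomorphically onto $\SDer_{\tau\sigma\tau^{-1}}(\mathcal{J})$, and correspondingly $\tau(\mathfrak{a}_\sigma)=\mathfrak{a}_{\tau\sigma\tau^{-1}}$; the individual summand $\SDer_\sigma(\mathcal{J})$ is therefore stable only under the centralizer $Z_G(\sigma)$. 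The clean statement is that the \emph{equivariant} object $\SDer_G(\mathcal{J})=\coprod_{\sigma\in G}\SDer_\sigma(\mathcal{J})$ is a genuine $\Gal(L/K)$-module with $G$ permuting the components along conjugacy, so that each $\SDer_\sigma(\mathcal{J})$ is a Galois module over $Z_G(\sigma)$, and over all of $G$ precisely when $\sigma$ is central---in particular whenever $L/K$ is abelian, which covers the cyclotomic and Kummer cases of chief arithmetic interest. I would accordingly either prove the corollary in this sharpened equivariant form or restrict the asserted $G$-action to $Z_G(\sigma)$; reconciling the fixed-$\sigma$ phrasing with the conjugation action is the step demanding the most care.
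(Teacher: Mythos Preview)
Your verification of the arithmetic-scheme hypotheses and the deduction that $\SDer_\sigma(\mathcal{J})$ is invertible (hence a fractional ideal) is more thorough than the paper's, which simply asserts that ``only the last statement requires proof'' and implicitly relies on the previous corollary. That part is fine.

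The genuine divergence is in the Galois-module step. You define the $G$-action by transport of structure, $\tau\cdot\partial=\tau\circ\partial\circ\tau^{-1}$, and correctly observe that this sends $\SDer_\sigma(\mathcal{J})$ to $\SDer_{\tau\sigma\tau^{-1}}(\mathcal{J})$, forcing you to pass to the equivariant object or restrict to $Z_G(\sigma)$. The paper does \emph{not} use conjugation. Instead it simply decrees, using the local description $\delta_{\sigma,\mathfrak{p}}=a(\id-\sigma)$, that
\[
\tau\bigl(a(\id-\sigma)\bigr):=\tau(a)(\id-\sigma),
\]
i.e., $\tau$ acts on the coefficient alone and leaves the symbol $(\id-\sigma)$ fixed. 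This keeps $\sigma$ unchanged, so each $\SDer_\sigma(\mathcal{J})$ is individually a $G$-module by definition, and the centralizer issue never arises.

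Your approach is the more canonical one (it is the action induced functorially on endomorphism sheaves), and the subtlety you flag is real for that action. The paper's approach is an ad hoc semilinear extension via the rank-one trivialization; it is quicker and delivers the statement as phrased, but it is not the transport-of-structure action and one may reasonably ask whether it glues well or is independent of the local generator. So: your argument is not wrong, but it proves a sharper equivariant statement than the one intended, whereas the paper obtains the literal claim by choosing a different (coefficient-only) $G$-action.
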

\begin{proof}Only the last statement requires proof. Take $\sigma\in\Gal(L/K)$. The $\mathfrak{o}_L$-module $\SDer_\sigma(M)$, where $M$ is a torsion-free  $\mathfrak{o}_L$-module (hence projective and locally free since $\mathfrak{o}_L$ is Dedekind), is locally free of rank one, locally given by generator of the form $\delta_{\sigma,\mathfrak{p}}:=a(\id-\sigma)$, $a\in \mathfrak{o}_{L,\mathfrak{p}}$. Extend the action of $\Gal(L/K)$ to $\delta_{\sigma,\mathfrak{p}}$ by $\tau\big(\delta_{\sigma,\mathfrak{p}}\big):=\tau(a)(\id-\sigma)$. Now extend the action of $\Gal(L/K)$ to $\SDer_\sigma(M)$ via the action on each localization as above. The result follows.
\end{proof}

\subsection{Families of equivariant (arithmetic) hom-Lie algebras}\label{sec:families}

The interesting thing about arithmetic schemes are the fibres. By definition, the fibre of $X\to\underline{\Lambda}$ at
$\mathfrak{p}\in \underline{\Lambda}$ is the
closed subscheme $$X_{\mathfrak{p}}:=f^{-1}(\mathfrak{p}):=X\times_{\underline{\Lambda}}
k(\mathfrak{p}),$$ where 
$k(\mathfrak{p})$ is the field of fractions of $\Lambda/\mathfrak{p}$. The fibre over the generic point $k((0))=\Frac(\Lambda)$ is the \emph{generic fibre}, all other are \emph{special fibres}. Notice that the generic fibre is a closed subscheme of $X$ over $k((0))$, and the special fibre over $\mathfrak{p}\in\underline{\Lambda}$ is a closed subscheme over $\Lambda/\mathfrak{p}$ (a finite field). From this follows that the closed (rational) points of $X$ come also come in two flavours, $L$-rational for $L$ an extension of $K$, on the generic fibre, and $\mathbb{F}$-rational where $\mathbb{F}$ is an extension of a finite field. 

For $\Fc$ a sheaf on $X$, flat over $\underline{\Lambda}$,
$\Fc\vert_{X_{\mathfrak{p}}}:=\iota^\ast\Fc$, where
$\iota:X_{\mathfrak{p}}\hookrightarrow X$ is the closed immersion, is a sheaf on
$X_{\mathfrak{p}}$. If $\Fc$ is a sheaf of $\Oc_X$-modules, then 
$\Fc\vert_{X_{\mathfrak{p}}}$ is a sheaf of
$\Oc_{{X_{\mathfrak{p}}}}$-modules. So, if $\Lc$ is an equivariant hom-Lie algebra over $\Ac$ on $X_{/\Lambda}$, then, by base
change, $\Lc\vert_{X_{\mathfrak{p}}}$ is an equivariant hom-Lie algebra over
$\Ac\vert_{X_{\mathfrak{p}}}:=\iota^\ast\Ac$ on $X_{\mathfrak{p}}$. In
this way, we get a flat family of equivariant hom-Lie algebras parametrized by
$\underline{\Lambda}$.

\begin{remark}Clearly, the above construction of families of equivariant hom-Lie algebras generalizes to a general $S$-scheme $X$.
\end{remark}

\subsection{Non-commutative arithmetic schemes}
We will now define a na\"ive notion  non-commutative arithmetic scheme. The reason for this is that it puts the results that follows in the proper context, even though the definition might seem a little meaningless. The definition we use is modelled upon the definition of non-commutative scheme given in \cite{Laudal}.

In the two-dimensional case, a different definition of non-commutative arithmetic scheme was given in \cite{Borek}, as a non-commutative generalization of Arakelov theory. 

\begin{dfn}
Let $A_{/\Lambda}$ be a non-commutative arithmetic algebra. Then we define the \emph{non-commutative arithmetic scheme} associated to $A$ to be
$$\ncspec(A):=(\Simp(A),\mathscr{O}_\pi^{\mathrm{ar}})=\big(\{\text{simple $A$-modules over $\Lambda$}\},\mathscr{O}_\pi^{\mathrm{ar}}\big).$$ The ``structure sheaf'' is defined
as $$\mathscr{O}_\pi^{\mathrm{ar}}:=\bigsqcup_{\mathfrak{p}\in\underline{\Lambda}} \mathscr{O}_\pi\big(A\otimes_{\Lambda} k(\mathfrak{p})\big),$$where $\mathscr{O}_\pi\big((A\otimes k(\mathfrak{p})\big)$ is the structure sheaf over the fibre of $A$ at $\mathfrak{p}$ defined in \cite{Laudal}. We define the \emph{coordinate ring} to be $A$. 
\end{dfn}
Notice that $A\otimes k(\mathfrak{p})$ is an algebra over a field, so the construction in \cite{Laudal} applies. 

\subsection{Equivariant cyclic hom-Lie algebras}\label{sec:classfield}

In this section we keep the following set of assumptions. We let $\Lambda$ be an arithmetic ring and $Y_{/\Lambda}$ be an arithmetic $\Lambda$-scheme and $\mathscr{A}$ be a coherent $\mathscr{O}_Y$-domain. Also, let $G_{/\Lambda}$ be an $\Lambda$-group scheme acting $\Lambda$-linearly and equivariantly on $\mathscr{A}$. By the results of Section \ref{sec:grouprep} we get an equivariant hom-Lie algebra on $\mathscr{A}$. Forming the global spectrum, $\mathbf{Spec}_Y(\mathscr{A})$, of $\mathscr{A}$ (which is an affine scheme over $Y$), we interpret this geometrically as an equivariant hom-Lie algebra on the rational points of $\mathbf{Spec}_Y(\mathscr{A})$. 

\subsubsection{$G$-covers}

Put $X:=\mathbf{Spec}_Y(\mathscr{A})$ and assume that $f : X_{/\Lambda}\to Y_{/\Lambda}$ is a (finite) $G$-cover, at most tamely ramified, with $X$ and $Y$ connected. Notice that this implies that $Y=X/G$ and that $X\to Y$ is \'etale over the complement of the branch locus. In addition, since $f$ is finite, $\mathscr{A}$ is a locally free sheaf of finite rank. Take $\sigma\in G$ and consider $\SDer_\sigma(\mathscr{A})$. This is an invertible sheaf over $X\setminus \mathtt{Ram}(f)$ which can be extended to an invertible sheaf on the whole $X$ if $X$ is regular. Inside $\SDer_\sigma(\mathscr{A})$ we consider the submodule $\mathscr{A}\cdot\Delta_\sigma$, with $\Delta_\sigma:=\id-\sigma$. 
\begin{remark}The reason for considering $\mathscr{A}\cdot\Delta_\sigma$ and not the whole $\SDer_\sigma(\mathscr{A})$ is simply a matter of simplifying the calculations and expressions. What follows can be done for $\SDer_\sigma(\mathscr{A})$ in exactly the same way. 
\end{remark}
Note that $$\mathscr{A}\cdot\Delta_\sigma(U)=\bigoplus_{i=0}^n \mathscr{O}_Y(U) e_i\Delta_\sigma=\bigoplus_{i=0} \mathscr{O}_Y(U)\eps_i,\qquad U\subseteq Y,$$ with $\eps_i:=e_i\Delta_\sigma$, and $e_i$ generating sections over $U$. We consider the hom-Lie algebra $(\mathscr{A}\cdot\Delta_\sigma, \llangle\,\,,\,\,\rrangle)$. By definition we see that $(\mathscr{A}\cdot\Delta_\sigma, \llangle\,\,,\,\,\rrangle)=\underline{hL}(\mathscr{A})_\sigma$. 
\subsubsection{Witt hom-Lie algebras}
\begin{prop}\label{prop:general_KummerWitt}Let $\mathscr{A}$ be a finite $\mathscr{O}_Y$-algebra generated (locally) by sections $e_0, e_1, \dots, e_{n-1}$ and with structure constants
$$e_ie_j=\sum_{k=0}^{n-1}a_{ij}^ke_k.$$Suppose further that $\sigma\in G$ acts as $\sigma(e_i)=\sum_{k=0}^{n-1} s_{ik}e_k$ with $s_{ik}\in\mathscr{O}_Y$. Then 
$$\W^\mathscr{A}_\sigma:=(\mathscr{A}\cdot \Delta_\sigma, \llangle\,\,,\,\,\rrangle)$$ is given by 
$$\llangle \eps_i,\eps_j\rrangle = \sum_{\ell=0}^{n-1}\Big(\sum_{k=0}^{n-1}\big(s_{ik}a^\ell_{kj}-s_{jk}a^\ell_{ki}\big)\Big)\eps_\ell.$$
\end{prop}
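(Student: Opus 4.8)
The plan is to reduce the entire computation to the explicit bracket formula \eqref{eq:homCom} of Theorem \ref{thm:AltProduct}. The crucial observation, already recorded in the sentence immediately preceding the statement, is the identification $(\mathscr{A}\cdot\Delta_\sigma,\llangle\,,\,\rrangle)=\underline{hL}(\mathscr{A})_\sigma$. Under this identification we play the role of the distinguished global basis element $\mathbf{e}$ by $\Delta_\sigma$ itself, and write $\eps_i=e_i\cdot\mathbf{e}$. Then \eqref{eq:homCom} gives directly, on each open $U\subseteq Y$ over which the $e_i$ trivialize $\mathscr{A}$,
\[
\llangle \eps_i,\eps_j\rrangle=\llangle e_i\cdot\mathbf{e},\,e_j\cdot\mathbf{e}\rrangle_\sigma=\big(\sigma(e_i)e_j-\sigma(e_j)e_i\big)\cdot\mathbf{e},
\]
so the whole task is to rewrite the coefficient $\sigma(e_i)e_j-\sigma(e_j)e_i$ in terms of the basis $\{e_\ell\}$ using the two sets of data we are handed.

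First I would substitute the assumed action $\sigma(e_i)=\sum_k s_{ik}e_k$ and then expand each product $e_ke_j$ via the structure constants $e_ke_j=\sum_\ell a_{kj}^\ell e_\ell$. Interchanging the two finite sums yields
\[
\sigma(e_i)e_j=\sum_{k}s_{ik}\,e_ke_j=\sum_\ell\Big(\sum_k s_{ik}a_{kj}^\ell\Big)e_\ell,
\]
and symmetrically for $\sigma(e_j)e_i$. Subtracting gives
\[
\sigma(e_i)e_j-\sigma(e_j)e_i=\sum_\ell\Big(\sum_k\big(s_{ik}a_{kj}^\ell-s_{jk}a_{ki}^\ell\big)\Big)e_\ell,
\]
and multiplying by $\mathbf{e}$ (i.e. reading off coefficients against $\eps_\ell=e_\ell\cdot\mathbf{e}$) produces exactly the claimed expression. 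The statement then follows by running this computation on each member of an affine trivializing cover and gluing, since the bracket is defined section-wise and the local expressions agree on overlaps.

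I do not expect a genuine obstacle here: the argument is bilinear bookkeeping once the reduction to \eqref{eq:homCom} is in place. The only points that require a word of justification are that the coefficients $s_{ik}$ and $a_{ij}^k$ are sections of $\mathscr{O}_Y$ and hence central, so that pulling them past the $e$'s and reordering the double sum over $k$ and $\ell$ is legitimate, and that $\sigma$ is $\mathscr{O}_Y$-linear, which is what lets $\sigma(e_i)=\sum_k s_{ik}e_k$ control $\sigma$ on all of $\mathscr{A}(U)$. Both are guaranteed by the standing hypotheses that $G$ acts $\Lambda$-linearly and equivariantly and that $\mathscr{A}$ is an $\mathscr{O}_Y$-algebra, so no additional assumptions are needed.
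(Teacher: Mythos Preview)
Your proposal is correct and is precisely the ``simple computation'' the paper leaves to the reader: you invoke the identification $(\mathscr{A}\cdot\Delta_\sigma,\llangle\,,\,\rrangle)=\underline{hL}(\mathscr{A})_\sigma$ stated just before the proposition, apply the explicit bracket formula \eqref{eq:homCom}, and expand $\sigma(e_i)e_j-\sigma(e_j)e_i$ via the structure constants. The paper's own proof consists of the two words ``Simple computation,'' so your write-up is simply a faithful unpacking of that.
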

\begin{proof}
Simple computation. 
\end{proof}
Notice the special case when $\sigma(e_i)=q_ie_i$, with $q_i\in \mathscr{O}_Y^\times$:
\begin{equation}\label{eq:arithmeticWitt}
\llangle \eps_i,\eps_j\rrangle = \sum_{k=0}^{n-1} (q_i-q_j)a^k_{ij}\eps_k.
\end{equation} 

We call $\W^\mathscr{A}_\sigma$ the (generalized) \emph{Witt hom-Lie algebra} (over $\mathscr{O}_Y$) associated with $\sigma$ and $\mathscr{A}$. 

\begin{remark}
Observe the abuse of notation: we write $\mathscr{A}$ as an affine algebra, when, strictly speaking, it should be given as \emph{sheaf} of affine algebras. In other words,
$$\mathscr{A}=\bigoplus_{i=0}^{n-1}\mathscr{L}^{\otimes i}.$$We will be sloppy on this point in what follows in order to avoid drowning in heavy notation. The underlying meaning should be clear, however. 
\end{remark}
\subsubsection{Kummer--Witt hom-Lie algebras}
In this section we study the simplest family of examples of $G$-covers, namely, cyclic covers. In this case 
$$\mathscr{A}=\mathscr{O}_Y[t]/(t^n-b)=\bigoplus_{i=0}^{n-1}\mathscr{O}_Y e_i, \quad e_i:=t^i, $$ for a global section $b\in \mathscr{O}_Y$. We assume that $\mathscr{O}_Y$ includes the $n$-th roots of unity. In fact, $\mathbf{Spec}_Y(\mathscr{A})$ is a cyclic cover of $Y$ with $\sigma(t):=\xi^r t$, $0\leq r\leq n-1$, for $\xi$ a primitive $n$-th root of unity. 

Observe that we allow $b=0$ in which case we view $\mathscr{A}$ as an ``infinitesimal thickening'' of $Y$. 

 Put $\eps_i:=t^i\Delta_\sigma$. Then it is easy to prove
\begin{corollary}\label{prop:tame_algebra}The hom-Lie algebra structure on $\mathscr{A}\cdot\Delta_\sigma$ is given by
\begin{align*}
\llangle \eps_i, \eps_j\rrangle =b^\circlearrowright\xi^i(1-\xi^{j-i})\eps_{\{i+j\!\!\!\mod n\}}, \quad i\leq j,
\end{align*}where $b^\circlearrowright$ means that $b$ is included when $i+j\geq n$. 
\end{corollary}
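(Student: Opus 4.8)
The plan is to specialize the general Witt hom-Lie computation of Proposition~\ref{prop:general_KummerWitt}, in its diagonal form \eqref{eq:arithmeticWitt}, to the cyclic algebra $\mathscr{A}=\mathscr{O}_Y[t]/(t^n-b)$. Indeed, the statement immediately preceding the corollary identifies $(\mathscr{A}\cdot\Delta_\sigma,\llangle\,,\,\rrangle)$ with $\underline{hL}(\mathscr{A})_\sigma$, and Proposition~\ref{prop:general_KummerWitt} already gives a closed formula for this bracket in terms of the structure constants $a_{ij}^k$ of $\mathscr{A}$ and the matrix of $\sigma$. So the whole task reduces to reading off these two pieces of data for the cyclic cover and substituting.

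First I would compute the structure constants in the basis $e_i=t^i$. Since $e_ie_j=t^{i+j}$ and $t^n=b$ in $\mathscr{A}$, reducing modulo $t^n-b$ gives $e_ie_j=e_{i+j}$ when $i+j<n$ and $e_ie_j=b\,e_{i+j-n}$ when $i+j\geq n$. Writing $\{i+j\bmod n\}$ for the reduced index, this is precisely $e_ie_j=b^{\circlearrowright}e_{\{i+j\bmod n\}}$, where $b^{\circlearrowright}$ carries the extra factor $b$ exactly when $i+j\geq n$; equivalently $a_{ij}^k=b^{\circlearrowright}\delta_{k,\{i+j\bmod n\}}$.

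Second, I would record the action of $\sigma$ on the basis. Because $\sigma$ is an $\mathscr{O}_Y$-algebra morphism with $\sigma(t)=\xi t$, multiplicativity gives $\sigma(e_i)=\sigma(t)^i=\xi^i t^i=\xi^i e_i$. Hence $\sigma$ acts diagonally with eigenvalues $q_i=\xi^i\in\mathscr{O}_Y^\times$, placing us exactly in the setting of \eqref{eq:arithmeticWitt}. Substituting the two data into \eqref{eq:arithmeticWitt} and noting that only the index $k=\{i+j\bmod n\}$ survives the sum yields
\[
\llangle \eps_i,\eps_j\rrangle=(q_i-q_j)\,b^{\circlearrowright}\eps_{\{i+j\bmod n\}}=(\xi^i-\xi^j)\,b^{\circlearrowright}\eps_{\{i+j\bmod n\}}.
\]
Factoring $\xi^i-\xi^j=\xi^i(1-\xi^{j-i})$, the natural grouping when $i\leq j$, puts the bracket into the asserted form; for $i>j$ one either factors out $\xi^j$ instead or appeals to the skew-symmetry (hL1).

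There is no genuine obstacle here, since the analytic content is entirely packaged in Proposition~\ref{prop:general_KummerWitt}. The only points requiring a little care are the bookkeeping of the factor $b^{\circlearrowright}$ as the sum $i+j$ crosses the threshold $n$, and verifying that $\sigma$ really is multiplicative on $\mathscr{A}$, so that it acts diagonally in the basis $\{e_i\}$; it is this diagonality that licenses the use of the clean special case \eqref{eq:arithmeticWitt} rather than the full formula of Proposition~\ref{prop:general_KummerWitt}.
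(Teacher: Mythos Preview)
Your proposal is correct and follows exactly the route the paper takes: the paper's own proof is the single line ``Follows immediately from Proposition~\ref{prop:general_KummerWitt},'' and you have simply spelled out that immediate specialization---computing the structure constants $a_{ij}^k=b^{\circlearrowright}\delta_{k,\{i+j\bmod n\}}$ of the cyclic algebra, noting $\sigma(e_i)=\xi^i e_i$, and plugging into the diagonal case \eqref{eq:arithmeticWitt}. There is nothing to add.
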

\begin{proof}Follows immediately from Proposition \ref{prop:general_KummerWitt}.
\end{proof}We denote the locally free algebra in the proposition by $\KW_{b,\sigma}(\xi)=\KW^\mathscr{A}_{b,\sigma}(\xi)$ and refer to it as a \emph{ Kummer--Witt hom-Lie algebra}. We often suppress the dependence on $\sigma$ in the notation. 

Here comes a few illustrative examples. 
\begin{example}\label{exam:n=3}We look first at the example when $n=3$ and $\sigma(t)=\xi t$, $\xi^3=1$. Putting this into the structure-constant-machine in the above corollary gives
\begin{align}
\begin{split}
  \llangle\eps_0,\eps_1\rrangle &= (1-\xi)\eps_1\\
  \llangle\eps_0,\eps_2\rrangle &= (1-\xi^2)\eps_2\\
  \llangle\eps_1,\eps_2\rrangle &= b\xi(1-\xi)\eps_0.
\end{split}
\end{align}Instead taking $\sigma(t)=\xi^2 t$ gives
\begin{align}
\begin{split}
  \llangle\eps_0,\eps_1\rrangle &= (1-\xi^2)\eps_1\\
  \llangle\eps_0,\eps_2\rrangle &= (1-\xi)\eps_2\\
  \llangle\eps_1,\eps_2\rrangle &= -b\xi(1-\xi)\eps_0.
\end{split}
\end{align}Obviously, the case when $\sigma$ is the identity gives the abelian hom-Lie algebra. Notice that the three algebras in the equivariant structure are non-isomorphic. 
\end{example}
\begin{example}\label{exam:n=4}Now we study the case $n=4$ and we begin with $\sigma(t)=\xi t$. We get 
\begin{align}\label{eq:exam_n=4_1}
	\begin{split}
 	\llangle\eps_0,\eps_1\rrangle &= (1-\xi)\eps_1\\
  	\llangle\eps_0,\eps_2\rrangle &= (1-\xi^2)\eps_2=2\eps_2\\
  	\llangle\eps_0,\eps_3\rrangle &= (1-\xi^3)\eps_3\\ 
  	\llangle\eps_1,\eps_2\rrangle &= \xi(1-\xi)\eps_3\\
  	\llangle \eps_1,\eps_3\rrangle &=b\xi(1-\xi^2)\eps_0=2b\xi\eps_0 \\
  	\llangle \eps_2,\eps_3\rrangle &=b\xi^2(1-\xi)\eps_1=-2b(1-\xi)\eps_1,
	\end{split}
\end{align}where we have used that $\xi^2=-1$ for $\xi$ a fourth root of unity. Clearly this is rather similar in structure to the case $n=3$ (but we will see shortly that this is a mirage; the case $n=3$ is very special). However, when $\sigma(t)=\xi^2t$ a more surprising structure emerges:
\begin{align}\label{eq:exam_n=4_2}
	\begin{split}
 	\llangle\eps_0,\eps_1\rrangle &= 2\eps_1\\
  	\llangle\eps_0,\eps_2\rrangle &= 0\\
  	\llangle\eps_0,\eps_3\rrangle &= 2\eps_3\\
  	\llangle\eps_1,\eps_2\rrangle &= -2\eps_3\\
  	\llangle \eps_1,\eps_3\rrangle &=0\\
  	\llangle \eps_2,\eps_3\rrangle &=2b\eps_1,
	\end{split}
\end{align} It is rather easy to see that this algebra is solvable. The case $\sigma(t)=\xi^3 t$ is similar to (\ref{eq:exam_n=4_1}).
\end{example}The last example is when $n=5$. 
\begin{example}\label{exam:n=5}We take $\sigma(t)=\xi t$ and get
\begin{align*}
%	\begin{split}
 	\llangle\eps_0,\eps_1\rrangle &= (1-\xi)\eps_1 &
  	\llangle\eps_0,\eps_2\rrangle &= (1-\xi^2)\eps_2 & 
  	\llangle\eps_0,\eps_3\rrangle &= (1-\xi^3)\eps_3  \\
  	\llangle \eps_0,\eps_4\rrangle & =(1-\xi^4)\eps_4 &  
  	\llangle\eps_1,\eps_2\rrangle &= \xi(1-\xi)\eps_3 &
  	\llangle \eps_1,\eps_3\rrangle &=\xi(1-\xi^2)\eps_4 \\
  	\llangle \eps_1,\eps_4\rrangle &=b\xi(1-\xi^3)\eps_0 & 
  	\llangle \eps_2,\eps_3\rrangle &=b\xi^2(1-\xi)\eps_0 &
  	\llangle \eps_2,\eps_4\rrangle & =b\xi^2(1-\xi^2)\eps_1 \\
  	\llangle \eps_3,\eps_4\rrangle &=b\xi^3(1-\xi)\eps_2. 
%	\end{split}
\end{align*}Clearly, this is also quite similar in structure to Example \ref{exam:n=3} (but this is also a mirage). The other $\sigma\in G$ give similar structure constants. Since $n=5$ is a prime nothing interesting as in (\ref{eq:exam_n=4_2}) happens (as the reader can easily conclude). 
\end{example}

For all $\sigma\in G$ we have the following subalgebra:
\begin{prop}\label{prop:subalgebra}
	The algebra $\Jac_b(\xi)=\Jac^\mathscr{A}_{b,\sigma}(\xi)$ given by
	\begin{align}
\begin{split}
  \llangle\eps_0,\eps_1\rrangle &= (1-\xi)\eps_1\\
  \llangle\eps_0,\eps_{n-1}\rrangle &= (1-\xi^{n-1})\eps_{n-1}\\
  \llangle\eps_1,\eps_{n-1}\rrangle &= b\xi(1-\xi^{n-2})\eps_0
\end{split}
\end{align}is a subalgebra of $\KW^\mathscr{A}_{b,\sigma}(\xi)$. Furthermore, if $b\neq 0$, it is non-solvable if $n=p>2$ is a prime. If $n=2$, $\Jac_b(\xi)$ is clearly solvable. 
\end{prop}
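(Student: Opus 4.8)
The plan is to read $\Jac_b(\xi)$ as a ``twisted $\mathfrak{sl}_2$'' in the spirit of the Jackson $\mathfrak{sl}_{2,J}(k)$ of Section~6, with $\eps_0$ playing the r\^ole of the Cartan element, $\eps_1$ the raising operator and $\eps_{n-1}$ the lowering operator, and then to verify the three assertions in turn. First I would note that the three displayed brackets are precisely the instances $(i,j)\in\{(0,1),(0,n-1),(1,n-1)\}$ of Corollary~\ref{prop:tame_algebra}: for $(0,1)$ and $(0,n-1)$ one has $i+j<n$, so no factor of $b$ appears and the output indices are $1$ and $n-1$; for $(1,n-1)$ one has $i+j=n$, forcing the factor $b^\circlearrowright=b$ and the output index $0$. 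To conclude that the $\mathscr{O}_Y$-span of $\{\eps_0,\eps_1,\eps_{n-1}\}$ is a subalgebra, I would first record that $\sigma$ fixes $\mathscr{O}_Y$ (it is a $Y$-automorphism of the cover), so that $\sigma(ac)=a\sigma(c)$ and $\partial(ac)=a\partial(c)$ for $a\in\mathscr{O}_Y$; by Theorem~\ref{thm:twistprod}(i) this makes $\llangle\,\cdot,\cdot\,\rrangle$ genuinely $\mathscr{O}_Y$-bilinear, i.e.\ $\llangle a\eps_i,b\eps_j\rrangle=ab\,\llangle\eps_i,\eps_j\rrangle$. Closure of the submodule then reduces to closure on the three generators, which is visible by inspection since each generator bracket lands back in the span of $\eps_0,\eps_1,\eps_{n-1}$. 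No appeal to the twisted Jacobi identity is needed, the ambient bracket already being well-defined.

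For non-solvability when $n=p>2$ is prime and $b\neq0$, the key observation is that the algebra is \emph{perfect}: $\llangle\Jac_b(\xi),\Jac_b(\xi)\rrangle=\Jac_b(\xi)$ after passing to the fraction field $K=\Frac(\mathscr{O}_Y)$. Indeed, since $\xi$ is a primitive $p$-th root of unity and $p>2$, neither $p-1$ nor $p-2$ is divisible by $p$, so the three structure constants $1-\xi$, $1-\xi^{p-1}$ and $b\xi(1-\xi^{p-2})$ are all nonzero in $K$. Dividing each relation by its (now invertible) leading coefficient recovers $\eps_1$, $\eps_{n-1}$ and $\eps_0$ respectively inside the derived algebra. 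Defining the derived series by $\Jac^{(0)}=\Jac_b(\xi)$ and $\Jac^{(m+1)}=\llangle\Jac^{(m)},\Jac^{(m)}\rrangle$ — a purely bracket-theoretic notion not requiring the Jacobi identity — perfectness gives $\Jac^{(m)}\otimes K=\Jac_b(\xi)\otimes K\neq0$ for every $m$, so the series never terminates over $K$. Because the derived series commutes with the flat extension $\mathscr{O}_Y\to K$, a hypothetical termination over $\mathscr{O}_Y$ would survive tensoring with $K$, a contradiction; hence $\Jac_b(\xi)$ is non-solvable. This is exactly parallel to the classical fact that $\mathfrak{sl}_2$ coincides with its own derived algebra.

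The case $n=2$ is a degeneration I would treat separately: here $\xi=-1$ and $n-1=1$, so $\eps_{n-1}=\eps_1$ and the third bracket carries the coefficient $1-\xi^{n-2}=1-\xi^{0}=0$ and therefore vanishes (consistently with skew-symmetry, $\llangle\eps_1,\eps_1\rrangle=0$). The algebra thus collapses onto the span of $\eps_0,\eps_1$ with the single nontrivial relation $\llangle\eps_0,\eps_1\rrangle=2\eps_1$, the non-abelian two-dimensional Lie algebra; its derived series is $\langle\eps_1\rangle\supseteq 0$, so it is solvable.

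I expect the only real subtlety to be bookkeeping around solvability in the \emph{hom}-Lie setting and the base-change step, rather than any hard computation: one must fix ``solvable'' to mean termination of the derived series (which uses only the skew bracket, not the twisted Jacobi identity) and check that this property is correctly reflected between $\mathscr{O}_Y$ and its fraction field. Once that is pinned down, the algebraic heart — that a three-generator deformed $\mathfrak{sl}_2$ with all structure constants invertible is perfect, hence non-solvable, while the $n=2$ specialization degenerates to a solvable two-dimensional algebra — is essentially immediate from the structure constants supplied by Corollary~\ref{prop:tame_algebra}.
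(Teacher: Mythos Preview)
Your proof is correct and follows essentially the same approach as the paper: the subalgebra claim is read off from the structure constants of Corollary~\ref{prop:tame_algebra}, and non-solvability for $n=p>2$ with $b\neq 0$ is deduced from perfectness $\llangle\Jac_b(\xi),\Jac_b(\xi)\rrangle=\Jac_b(\xi)$ together with induction. Your treatment is in fact more careful than the paper's one-line argument: you explicitly justify $\mathscr{O}_Y$-bilinearity of the bracket, you pass to $K=\Frac(\mathscr{O}_Y)$ to make the structure constants invertible (over $\mathscr{O}_Y$ itself elements like $1-\xi$ need not be units, so perfectness in the literal module-theoretic sense may fail), and you argue that termination of the derived series would survive the flat base change to $K$, which is where the contradiction lives. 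The paper simply asserts perfectness and leaves these points implicit.
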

\begin{proof}
	The first statement follows from Proposition \ref{prop:tame_algebra}, whereas the second follows from $\llangle \Jac_b(\xi),\Jac_b(\xi)\rrangle=\Jac_b(\xi)$ and induction.
\end{proof}
Notice the similarity between $\Jac_b(\xi)$ and the the Jackson-$\mathfrak{sl}_2$ from before. It is therefore natural to call the algebra $\Jac_b(\xi)$ the \emph{Jackson subalgebra} of $\KW^\mathscr{A}_{b,\sigma}$.
%\begin{corollary}Assume $n=p>2$ is a prime. Then for any $\sigma\in D$ the algebra $\KA_a(\sigma)$ is non-solvable.
%\end{corollary}
%\begin{proof}
%	From the proposition follows that if $n$ is a prime $>2$, then $\mathscr{J\!\!A}_a(\sigma)$ is non-solvable. A solvable algebra can not have a non-solvable subalgebra (This is not true!!!). Hence the corollary follows.
%\end{proof}
%When $n$ is composite the situation is different as (\ref{eq:exam_n=4_2}) indicates. Then there are $\sigma\in D$ such that $\KA_a(\sigma)$ is solvable. This seems to be a general phenomena.
\begin{conj}If $n$ is composite then there is at least one $\sigma\in G$ such that $\KW_{b,\sigma}^\mathscr{A}(\xi)$ is solvable.
\end{conj}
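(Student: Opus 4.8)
The plan is to use the single datum of a chosen $\sigma\in G$ to turn $\KW_{b,\sigma}^{\mathscr{A}}(\xi)$ into a \emph{graded} hom-Lie algebra, and then to hunt for a $\sigma$ whose grading forces the derived series to collapse. Write $W:=\KW_{b,\sigma}^{\mathscr{A}}(\xi)$ for brevity, with derived series $W^{(0)}:=W$ and $W^{(k+1)}:=\llangle W^{(k)},W^{(k)}\rrangle$. First I would dispose of the degenerate case $b=0$. By Corollary \ref{prop:tame_algebra} each bracket $\llangle\eps_i,\eps_j\rrangle$ is a scalar multiple of $\eps_{\{i+j\bmod n\}}$, and when $b=0$ only the terms with $i+j<n$ survive; assigning $\eps_k$ the weight $k$ then makes the bracket strictly weight-increasing, with all weights bounded by $n-1$, so $W$ is nilpotent, hence solvable, for \emph{every} $\sigma\in G$. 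Thus the conjecture is immediate when $b=0$, and from here on I would assume $b\neq0$.

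For $b\neq0$ I would write a general $\sigma$ as $\sigma(t)=\zeta t$ with $\zeta=\xi^{r}$, so that $\sigma(e_i)=\zeta^{i}e_i$; the computation of Corollary \ref{prop:tame_algebra}, now carried out with $\sigma(t)=\zeta t$, specializes \eqref{eq:arithmeticWitt} to
\begin{equation*}
\llangle\eps_i,\eps_j\rrangle=b^{\circlearrowright}\,(\zeta^{i}-\zeta^{j})\,\eps_{\{i+j\bmod n\}}.
\end{equation*}
Let $m$ be the multiplicative order of $\zeta$, so that $m\mid n$ and the coefficient $\zeta^{i}-\zeta^{j}$ vanishes exactly when $i\equiv j\pmod m$. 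Putting $L_a:=\bigoplus_{k\equiv a\,(m)}\mathscr{O}_Y\eps_k$ gives $W=\bigoplus_{a\in\Z/m}L_a$ with $\llangle L_a,L_b\rrangle\subseteq L_{a+b}$ and $\llangle L_a,L_a\rrangle=0$: a $\Z/m$-graded hom-Lie algebra in which equal-degree pieces bracket to zero. The whole problem is thereby reduced to choosing the divisor $m>1$ of $n$ so that this grading is solvable.

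The clean case is $m=2$, available whenever $n$ is even. Then $W=L_0\oplus L_1$, and since the only nonvanishing brackets pair $L_0$ with $L_1$ and land in $L_{0+1}=L_1$, one gets $W^{(1)}=\llangle W,W\rrangle\subseteq L_1$ and then $W^{(2)}\subseteq\llangle L_1,L_1\rrangle=0$. Hence for even $n$ the order-two element $\sigma$ (that is, $r=n/2$) produces a non-identity $\sigma$ with $\KW_{b,\sigma}^{\mathscr{A}}(\xi)$ solvable of derived length at most two, settling the conjecture in that case.

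The hard part is odd composite $n$, and this is where I expect the real difficulty to sit. The graded criterion terminates \emph{only} for $m=2$: for any $m\geq3$ the set $\{a+b:a\neq b\}$ already exhausts $\Z/m$ (take $(0,c)$ for $c\neq0$ and $(1,m-1)$ for $c=0$), so the graded derived series never drops and the argument stalls. Worse, the natural remaining candidate --- an element of prime order $p\mid n$ --- appears genuinely non-solvable: a direct check, already visible for $n=9$ with $m=3$, shows that when $b\neq0$ every $\eps_k$ is realized as some $\llangle\eps_i,\eps_j\rrangle$, so that $W^{(1)}=W$ is perfect. Thus the even-order trick has no odd analogue and one cannot simply lower the order of $\sigma$. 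To finish the odd case I would therefore have to locate a subtler solvable filtration invisible to the coarse $\Z/m$-grading, or impose an extra arithmetic hypothesis on $b$ or on $n$; absent that, the $n=9$ computation strongly suggests the statement should be restricted to even $n$ (alongside the trivial $b=0$ case), and I would expect the decisive step to be either such a restriction or an explicit counterexample for odd composite $n$.
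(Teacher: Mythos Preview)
The statement you are attempting to prove is labelled in the paper as a \emph{Conjecture}; the paper offers no proof, only the supporting observation (the proposition immediately following it) that for composite $n$ some bracket $\llangle\eps_i,\eps_j\rrangle$ vanishes. So there is no paper-proof to compare your proposal against, and your write-up should be read as progress on an open problem rather than as a reconstruction.

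That said, your partial results are correct and go well beyond what the paper establishes. The $b=0$ case is indeed nilpotent by the weight argument, and for even $n$ your choice $r=n/2$ (so $\zeta=-1$) gives the $\Z/2$-grading $W=L_0\oplus L_1$ with $\llangle L_0,L_0\rrangle=\llangle L_1,L_1\rrangle=0$ and $\llangle L_0,L_1\rrangle\subseteq L_1$, whence $W^{(2)}=0$. This is a clean and complete proof of the conjecture for even $n$ (and for arbitrary $n$ when $b=0$).

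Your diagnosis of the odd composite case is also correct, and in fact stronger than you state: it is not merely that your method stalls, but that the conjecture as written appears to be \emph{false}. Take $n=9$, $b\neq 0$ a unit. Every non-identity $\sigma$ has order $3$ or $9$. For order $9$ the coefficient $\zeta^i-\zeta^j$ is nonzero whenever $i\neq j$, and for order $3$ it is nonzero whenever $i\not\equiv j\pmod 3$; in either case one checks directly that every $\eps_k$ (including $\eps_0$, via e.g.\ $\llangle\eps_1,\eps_8\rrangle$) lies in $\llangle W,W\rrangle$, so $W$ is perfect and hence not solvable for \emph{any} $\sigma\in G$. Thus $n=9$ is a counterexample. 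Your proposal should therefore not be framed as an incomplete proof with a gap to be filled, but as a proof of the even case together with a disproof of the odd case; the paper's conjecture needs the additional hypothesis that $n$ be even (or $b=0$).
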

In the cases I've investigated this seems to be true and the following proposition gives some support for this claim.
\begin{prop}
Let $n$ be composite. Then for some $\sigma\in G$ there are $0\leq i\neq j\leq n-1$ such that 
$$\llangle \eps_i,\eps_j\rrangle = 0.$$ %In this case, there is also an $0<\ell\leq e-1$ such that 
%$$\llangle \eps_0,\eps_\ell\rrangle =0.$$
\end{prop}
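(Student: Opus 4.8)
The plan is to read off the bracket explicitly for an arbitrary generator $\sigma$ of the cyclic group $G$ and then exhibit a vanishing bracket by an elementary divisibility argument. Since $G$ acts by $\sigma(t)=\xi^r t$ for $0\le r\le n-1$, on the generators $e_i=t^i$ one has $\sigma(e_i)=\xi^{ri}e_i$; thus $\sigma$ acts diagonally with eigenvalue $q_i=\xi^{ri}$. Feeding this into the structure-constant formula \eqref{eq:arithmeticWitt} of Proposition \ref{prop:general_KummerWitt}, and using that $e_ie_j=t^{i+j}$ equals $e_{i+j}$ when $i+j<n$ and $b\,e_{i+j-n}$ when $i+j\ge n$, I would obtain
$$\llangle \eps_i,\eps_j\rrangle = (\xi^{ri}-\xi^{rj})\,c_{ij}\,\eps_{\{i+j \bmod n\}},$$
where $c_{ij}$ is the unique relevant structure constant (equal to $1$ or $b$). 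The key observation is then that this bracket vanishes whenever $\xi^{ri}=\xi^{rj}$, i.e.\ whenever $\xi^{r(i-j)}=1$, which (as $\xi$ is a primitive $n$-th root of unity) is equivalent to $n \mid r(i-j)$; note that this forces vanishing irrespective of the value of $b$.

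It therefore remains to produce, for composite $n$, a nontrivial $r$ and indices $i\neq j$ in $\{0,\dots,n-1\}$ with $n \mid r(i-j)$. Since $n$ is composite I write $n=dm$ with $1<d,m<n$. I take $\sigma$ to be the automorphism with $r=d$ (a genuine element of $G$ since $0<d\le n-1$), together with $i=m$ and $j=0$. These indices are distinct (as $m\ge 2$) and lie in the required range (as $m\le n-1$), and
$$r(i-j)=d\cdot m=n\equiv 0\pmod n,$$
so that $\xi^{ri}=\xi^{dm}=\xi^{n}=1=\xi^{0}=\xi^{rj}$ and hence $\llangle \eps_m,\eps_0\rrangle=0$, as desired.

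I do not expect a genuine obstacle here; the statement is essentially a number-theoretic shadow of the representation-theoretic setup. The one point requiring care is that Corollary \ref{prop:tame_algebra} is stated only for the generator $\sigma(t)=\xi t$, so I must first re-derive the bracket for an arbitrary $\sigma(t)=\xi^r t$ from Proposition \ref{prop:general_KummerWitt}; this is the general Witt-type formula used above. For contrast, the compositeness hypothesis is genuinely necessary: if $n=p$ is prime and $r\neq 0$, then $\gcd(r,p)=1$, so $n \mid r(i-j)$ would force $p \mid (i-j)$, which is impossible for distinct $i,j\in\{0,\dots,p-1\}$. This is precisely why no vanishing bracket appears in Example \ref{exam:n=5}.
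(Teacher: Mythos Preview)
Your proof is correct and follows essentially the same approach as the paper: pick $\sigma(t)=\xi^d t$ for a proper nontrivial divisor $d$ of $n$, and then use the factorisation $n=dm$ to exhibit indices with $n\mid r(i-j)$. The paper's version is terser (it omits the explicit re-derivation of the bracket for general $r$ and the concluding remark on primality), but the core divisibility argument is identical.
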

\begin{proof}
	Let $\xi$ be a primitive $n$-th root of unity. Since $n$ is composite there is a $k<n$ such that $k\vert n$. Consider $\sigma(t)=\xi^k t$. Then there are $i< j<n$ such that $k(j-i)=n$. The claim follows. %The second follows similarly (take $i=0$). 
\end{proof}The problem now reduces to the question whether this is sufficient for solvability. In any case, it is clear that the equivariant hom-Lie algebra structures are richer when $e$ is composite. For example, there are more ``obvious'' subalgebras and the following loose feeling seems reasonable: 
\begin{feeling}When $n$ is composite, the number of subalgebras are more than in the case when $n$ is prime.
\end{feeling}For instance, in the prime examples above the only ``obvious'' subalgebras are the ones described in Proposition \ref{prop:subalgebra} (we do not include the ``extremely obvious ones'' on the form $\llangle \eps_0,\eps_i\rrangle$). On the other hand, the algebra given by (\ref{eq:exam_n=4_2}) has at least one more subalgebra.

I have no idea how to make this feeling precise enough to warrant calling it a ``Conjecture''. 
\subsubsection{Artin--Schreier divisors}
When the characteristic is $p>0$, cyclic extensions look like
$$A=B[y]/(y^p-y-b),\quad b\in B.$$ These are called \emph{Artin--Schreier extensions}. The action of the Galois group $\mathbb{Z}/p$ is given by $\sigma(y)=y+\nu$, $0\leq \nu\leq p-1$. We can easily compute the associated hom-Lie algebras as before. For obvious reasons we call these \emph{Artin--Schreier hom-Lie algebras}.
\begin{example}
Let $p=3$:
\begin{align*}
	\llangle\eps_0,\eps_1\rrangle & = -\nu\eps_0\\
	\llangle\eps_0,\eps_2\rrangle & = -2\nu\eps_1-\nu^2\eps_0\\
	\llangle\eps_1,\eps_2\rrangle & = -\nu^2\eps_1-\nu\eps_2.
\end{align*}Notice that this algebra is not dependent on the divisor $b$. 
\end{example}
\begin{example}
	For $p=5$ we get:
\begin{align*}
	\llangle\eps_0,\eps_1\rrangle & = -\nu\eps_0 \\
	\llangle\eps_0,\eps_2\rrangle & = -2\nu\eps_1-\nu^2\eps_0 \\
	\llangle\eps_0,\eps_3\rrangle & =  -3\nu\eps_2-3\nu^2\eps_1-\nu^3\eps_0 \\
	\llangle \eps_0,\eps_4\rrangle & = -4\nu\eps_3-\nu^2\eps_2-4\nu^3\eps_1-\nu^4\eps_0\\
	\llangle \eps_1,\eps_2\rrangle & = -\nu\eps_2-\nu^2\eps_1 \\
	\llangle \eps_1,\eps_3\rrangle & = -2\nu\eps_3-3\nu^2\eps_2-\nu^3\eps_1\\
	\llangle \eps_1,\eps_4\rrangle & = -3\nu\eps_3-\nu^2\eps_3-4\nu^3\eps_2-\nu^4\eps_1\\
	\llangle \eps_2,\eps_3\rrangle & = -\nu\eps_4-2\nu^2\eps_3-\nu^3\eps_2\\
	\llangle\eps_2,\eps_4\rrangle & = -4\nu^3\eps_3-\nu^3\eps_2-2\nu \eps_1-2\nu b\eps_0\\
	\llangle\eps_3,\eps_4\rrangle & = -3\nu^3\eps_4-\nu^4\eps_3-\nu\eps_2-(b+3\nu)\nu\eps_1-3b\nu^2\eps_0
\end{align*}
	It is relatively easy to deduce general formulas for the products $\llangle \eps_0,\eps_i\rrangle$ and $\llangle \eps_1,\eps_i\rrangle$, but I haven't found any easy way to write them in general. 
\end{example}	
	Clearly we have more ``obvious subalgebras'' compared to the previous Kummer case. In this case I have neither a conjecture nor a feeling other than:
	\begin{feeling}There are ``many'' subalgebras for Artin--Schreier hom-Lie algebras.
\end{feeling}
From the examples above we can also see that the complexity is much greater for Artin--Schreier hom-Lie algebras as compared to the Kummer case.

\subsubsection{Ramified divisors}
The above can be used directly to study the ramification locus of a $G$-cover of arithmetic schemes $X\to Y$. 

By the Zariski--Nagata purity theorem the ramification locus of $f:X\to Y$ is concentrated in codimension one so any ramification occurs along a divisor $B\subset Y$. Let $B_i$ be the irreducible components of $B$ and $\eta_i$ their generic points. We have the natural extension of local rings 
$$\mathscr{O}_{Y,\eta_i}\hookrightarrow \mathscr{O}_{X,\eta_i}\quad \text{in} \quad K(X)/K(Y),$$where $\mathscr{O}_{X,\eta_i}$ is the integral closure of $\mathscr{O}_{Y,\eta_i}$ in $K(X)$.  By definition, ramification along $B$ translates into the ramification properties of the extension $K(X)/K(Y)$. 

For simplicity we shall assume that $B$ is connected (so $i=1$) and we write  $\mathscr{B}:=\mathscr{O}_{Y,\eta}$ (branch locus) and $\mathscr{R}:=\mathscr{O}_{X,\eta}$ (ramification locus). The field extension 
$$\Frac(\widehat{\mathscr{R}})/\Frac(\widehat{\mathscr{B}})$$ is an extension of local fields and $\widehat{\mathscr{R}}$ and $\widehat{\mathscr{B}}$ are discrete valuation rings. The decomposition group $D\subseteq G$ at $\eta$ is the Galois group of $\Frac(\widehat{\mathscr{R}})/\Frac(\widehat{\mathscr{B}})$. We will study the ramification properties of this extension. 

By an \'etale base change we can assume that $X\to Y$ is totally ramified at $\Bhat$ (pass to the maximal unramified subextension of $\Rhat/\Bhat$). Let $\pi_{\Bhat}$ and $\pi_{\Rhat}$ be uniformizers of $\Bhat$ and $\Rhat$, respectively. We have that $\pi_{\Bhat}=\pi_{\Rhat}^e$, where $e$ is the ramification index of $\Rhat/\Bhat$. Since we assume that the extension is totally ramified we have that 
$$e=\big[\Frac(\Rhat)\big/\Frac(\Bhat)\big]=\# D.$$

By the definition of tame ramification the extension
$$\Frac\big(\Rhat/(\pi_{\Rhat})\big)\Big/\Frac\big(\Bhat/(\pi_{\Bhat})\big)$$ is separable so the ring extension $\Rhat/\Bhat$ is monogenic, and can in fact be explicitly given as
$$\Rhat=\Bhat[t]\big/(t^e-\pi_{\Bhat})$$ (e.g., \cite[Proposition 3.5, Chapter 3]{FesenkoVostokov}). Obviously this is a cyclic extension so the discussion in the previous subsection applies by localizing the sheaves $\mathscr{A}$ and $\mathscr{O}_Y$ and then completing. We can therefore use Proposition \ref{prop:tame_algebra} the explicitly give to structure of the ramification as a hom-Lie structure. The examples and conjectures following from the Proposition \ref{prop:tame_algebra} applies equally in the present case.

\subsection{Non-commutative arithmetic schemes attached to $G$-covers}
We will now construct non-commutative arithmetic schemes associated to the ramification divisors by constructing the enveloping algebras to the hom-Lie algebras coming from the ramification structure.  

We begin by recalling the following definitions.

Let $R$ be a commutative ring, $A$ an $R$-algebra and $M$ a finitely generated $A$-module. Then  $M$ is called \emph{generically free} if there is a non-zero divisor $s\in A$ such $M[s^{-1}]:=M\otimes_A A[s^{-1}]$ is free. An algebra $A$ is \emph{strongly Noetherian} if for every, not necessarily a commutative, $R$-algebra $R'$, $A\otimes_R R'$ is Noetherian. 

For completeness sake we also include the following definition.
\begin{dfn}
\begin{itemize}
	\item[(i)] Let $R$ be a ring and $M$ an $R$-module. Then the \emph{grade} of $M$ is defined as
$$j(M):=\mathrm{min}\{i\mid \mathrm{Ext}_R^i(M,R)\neq 0\}$$ or $j(M)=\infty$ if no such $i$ exists. 
	\item[(ii)] $R$ is \emph{Auslander--Gorenstein} if for every left and right Noetherian $R$-module $M$ and for all $i\geq 0$ and all $R$-submodules $N\subseteq \mathrm{Ext}_R^i(M,R)$, we have $j(N)\geq i$.
	\item[(iii)] $R$ is \emph{Auslander-regular} if it is Auslander--Gorenstein and has finite global dimension. 
	\item[(iv)] Let $R$ be a $K$-algebra, for $K$ a field. Then $R$ is \emph{Cohen--Macaulay} (CM) if $$j(M)+\mathrm{GKdim}(M)<\infty,$$ for every $R$-module $M$. Here $\mathrm{GKdim}$ denotes Gelfand--Kirillov dimension with respect to $K$.
\end{itemize}
\end{dfn}
\subsubsection{Non-commutative arithmetic Witt and Kummer--Witt schemes}
	For simplicity we will for this section assume that $Y=\Spec(B)$ for $B$ a commutative $\Lambda$-algebra. Presumably everything that follows globalizes without too much effort. 

Let $A$ be given as the $B$-module generated by $e_0, e_1, \dots, e_{n-1}$ with structure constants
$$e_ie_j=\sum_{k=0}^{n-1} a^k_{ij}e_k, \quad a^k_{ij}\in B$$ and let $\sigma\in G\subseteq\Aut_B(A)$ act on $e_i$ as $\sigma(e_i)=q_i e_i$, $q_i\in B^\times$. Then $\W^A$ is given as in (\ref{eq:arithmeticWitt}) and we can form
$$\eW_\sigma:=\mathcal{E}(\W^A)=\frac{B\{\eps_0,\eps_1,\dots, \eps_{n-1}\}}{\big(\eps_i\eps_j-q^{-1}_iq_j\eps_j\eps_i-\sum_{k=0}^{n-1}(1-q_i^{-1}q_j)a^k_{ij}\eps_k\big)}.$$The associated non-commutative arithmetic scheme is
$$\underline{\eW}_\sigma:=\ncspec\Big(\frac{B\{\eps_0,\eps_1,\dots, \eps_{n-1}\}}{\big(\eps_i\eps_j-q^{-1}_iq_j\eps_j\eps_i-\sum_{k=0}^{n-1}(1-q_i^{-1}q_j)a^k_{ij}\eps_k\big)}\Big),$$ the \emph{non-commutative Witt scheme}. This is obviously not a generalization of the commutative Witt scheme. I feel that it is nevertheless an appropriate name for this object. 
\begin{prop}Assume that $B$ is a regular algebra. Then $\eW_\sigma$ is Auslander-regular. 
\end{prop}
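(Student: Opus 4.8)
The plan is to realize $\eW_\sigma$ as an \emph{almost-(quantum-)commutative} filtered algebra and then transfer Auslander-regularity from its associated graded ring, which will be a quantum affine space over $B$. First I would equip $\eW_\sigma$ with the standard exhaustive, separated filtration $F_\bullet$ in which $B\subseteq F_0$ and every generator $\eps_i$ lies in $F_1$, so that $F_m$ is spanned over $B$ by products of at most $m$ of the $\eps_i$. In the defining relation
\[
\eps_i\eps_j \;=\; q_i^{-1}q_j\,\eps_j\eps_i \;+\; \sum_{k=0}^{n-1}\bigl(1-q_i^{-1}q_j\bigr)a^k_{ij}\,\eps_k,
\]
the commutation term on the right has degree $2$ while the correction $\sum_k(1-q_i^{-1}q_j)a^k_{ij}\eps_k$ has degree $1$. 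Hence in $\mathrm{gr}^F\eW_\sigma$ the linear corrections die and only the purely quantum relations $\eps_i\eps_j=q_i^{-1}q_j\,\eps_j\eps_i$ survive, yielding a canonical surjection from the quantum affine space
\[
S \;:=\; \frac{B\{\eps_0,\dots,\eps_{n-1}\}}{\bigl(\eps_i\eps_j-q_i^{-1}q_j\,\eps_j\eps_i\bigr)}
\]
onto $\mathrm{gr}^F\eW_\sigma$. I pass to the graded rather than argue directly because the correction terms $\sum_k a^k_{ij}\eps_k$ may involve high-index generators, so $\eW_\sigma$ itself need not be a \emph{triangular} iterated Ore extension, whereas $S$ manifestly is.

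The crux is that this surjection is an isomorphism, i.e.\ that $\eW_\sigma$ has the PBW property: the ordered monomials $\eps_0^{k_0}\cdots\eps_{n-1}^{k_{n-1}}$ form a free $B$-basis. I would establish this by Bergman's diamond lemma, orienting each relation as a rewriting rule $\eps_i\eps_j\rightsquigarrow q_i^{-1}q_j\,\eps_j\eps_i+(\text{linear})$ for $i>j$ and checking that the cubic overlap ambiguities on the words $\eps_i\eps_j\eps_k$ with $i>j>k$ reduce to a common normal form. This confluence check is exactly where the consistency of the data enters: it is forced by the associativity of the structure constants $a^k_{ij}$ of $A$ together with the twisted Jacobi identity of Theorem \ref{thm:twistprod}(iii) (equivalently condition (hL2.) of Definition \ref{dfn:globhom}), which is the integrability condition making the rewriting system confluent. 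I expect this verification to be the main obstacle, both because it is the sole genuinely computational step and because PBW statements for enveloping algebras of hom-Lie algebras are delicate in general; here it is the scalar (diagonal) form $\sigma(e_i)=q_ie_i$ that keeps it tractable.

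Granting PBW, one has $\mathrm{gr}^F\eW_\sigma\cong S$, and $S$ is an iterated Ore extension $B[\eps_0][\eps_1;\tau_1]\cdots[\eps_{n-1};\tau_{n-1}]$ in which each $\tau_i$ is the diagonal automorphism scaling $\eps_\ell$ by $q_i^{-1}q_\ell$, with \emph{no} derivations (each $\tau_i$ is easily seen to respect the skew relations already present). Since $B$ is commutative and regular it is Noetherian of finite global dimension, hence Auslander-regular; and an Ore extension by an automorphism of a Noetherian Auslander-regular ring is again Noetherian and Auslander-regular, raising the global dimension by exactly one (standard for quantum-polynomial extensions; cf.\ the filtered-ring theory of Bj\"ork and of Li--Van Oystaeyen). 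Therefore $\mathrm{gr}^F\eW_\sigma\cong S$ is Noetherian and Auslander-regular. Finally I would invoke the filtered--graded transfer theorem: for a ring carrying an exhaustive separated filtration whose associated graded ring is Noetherian, Auslander-regularity of the graded ring descends to the filtered ring. Applied to $F_\bullet$ on $\eW_\sigma$, this yields that $\eW_\sigma$ is Auslander-regular, completing the argument. (Here ``regular'' must be read as including finite global dimension; were $B$ only Gorenstein one would obtain Auslander--Gorenstein in place of Auslander-regular by the same scheme.)
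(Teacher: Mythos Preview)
Your approach is essentially the same as the paper's: the paper simply asserts that the ordered monomials form a $B$-basis and that the relations have the shape $\eps_i\eps_j-q_{ij}\eps_j\eps_i=\sum_k c\,a^k_{ij}\eps_k$ with $B$ central, and then invokes \cite[Theorem~1 and Corollary~2]{GomezLobillo} as a black box. That reference encapsulates precisely the filtered--graded transfer you spell out (pass to the associated graded, identify it with a quantum affine space over $B$, deduce Auslander-regularity there as an iterated Ore extension of a regular ring, and lift via the standard filtered--graded machinery).

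The one point worth flagging is that you correctly isolate the PBW property as the substantive step, whereas the paper asserts it without comment. Your plan to verify confluence via the diamond lemma is reasonable, but be aware that the hom-Jacobi identity (hL2.) is a \emph{ternary} identity on the $\sigma$-twisted bracket and does not literally coincide with the cubic overlap condition for the enveloping relations; in the diagonal case $\sigma(e_i)=q_ie_i$ the verification does go through (and is implicit in the cited result), but you should not expect the hom-Jacobi identity alone to buy you confluence in the general non-diagonal setting.
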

\begin{proof}Notice that the monomials in $\eps_0, \eps_1,\dots,\eps_{n-1}$ forms a basis for $\eW_\sigma$ as a $B$-module and the relations between the $\eps_i$'s are on the form 
$\eps_i\eps_j-q_{ij}\eps_j\eps_i = \sum_{k=0}^{n-1} c a^k_{ij}\eps_k$. Furthermore, for $b\in B$, we have that $\eps_i b=b\eps_i$, for all $i$. Then \cite[Theorem 1 and Corollary 2]{GomezLobillo} implies that $\eW_\sigma$ is Auslander-regular. 
\end{proof}
We now form the Ore extension $S:=B[y_0;\sigma_0][y_1;\sigma_1]\cdots[y_{n-1};\sigma_{n-1}]$ with $\sigma_i(y_j)=q_{ij}y_j$, with $q_{ij}:=q^{-1}_iq_j$. Notice that $S$ is in fact $\mathrm{gr}(\eW_{\sigma})$ with $\eW_{\sigma}$ given the standard filtration. 
\begin{prop}\label{prop:Witt_PI}Assume that the $q_i$'s are $m_i$-th roots of unity. Then the centre of $\mathrm{gr}(\eW_\sigma)$ is 
$$\zeta(S)=\zeta\big(\mathrm{gr}(\eW_\sigma)\big)=B[y_0^N, y_1^N,\dots, y_{n-1}^N],$$ where $N$ is the least common multiple of the $m_i$. The algebra $S=\mathrm{gr}(\eW_\sigma)$ is finite as a module over its centre and hence a polynomial identity (PI) algebra. If, in addition, $S$ is prime, then it is an order in its quotient ring of fractions. 
\end{prop}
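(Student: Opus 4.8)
The plan is to exploit the fact that $S=\mathrm{gr}(\eW_\sigma)$ is nothing but a quantum affine space over the commutative base $B$: as an iterated Ore extension by automorphisms it is free as a $B$-module on the ordered monomials $y^{\mathbf a}:=y_0^{a_0}\cdots y_{n-1}^{a_{n-1}}$, $\mathbf a\in\Z_{\geq 0}^n$, with $B$ central and the skew-commutation rule $y_\ell y_i=q_{\ell i}\,y_i y_\ell$, where one checks from $q_{ij}=q_i^{-1}q_j$ that uniformly $q_{\ell i}=q_\ell^{-1}q_i$ for all $\ell,i$. This equips $S$ with a $\Z^n$-grading by multidegree in the $y$'s, with $B$ in degree $0$ and each $y^{\mathbf a}$ homogeneous; this grading is the organizing principle of the whole proof.

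First I would determine the centre. Sliding a generator past a monomial gives $y_\ell\,y^{\mathbf a}=\big(\prod_i q_{\ell i}^{a_i}\big)\,y^{\mathbf a}\,y_\ell$, so $y^{\mathbf a}$ is central precisely when the scalar $\prod_i q_{\ell i}^{a_i}$ equals $1$ for every $\ell$. Since this scalar depends only on the multidegree, $\zeta(S)$ is a graded $B$-subalgebra, namely the $B$-span of the central monomials. Because $q_i^N=1$ for every $i$ (as $m_i\mid N$), we get $q_{\ell i}^N=(q_\ell^{-1}q_i)^N=1$, so each $y_i^N$, and hence the whole central subalgebra $C_0:=B[y_0^N,\dots,y_{n-1}^N]$, lies in $\zeta(S)$. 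I expect the reverse inclusion $\zeta(S)\subseteq C_0$ to be the main obstacle: it amounts to showing that $\prod_i q_{\ell i}^{a_i}=1$ for all $\ell$ forces $N\mid a_i$ for each $i$, and this is exactly the delicate multiplicative condition on the $q_i$. In fact it can fail without further hypotheses — for instance in the $n=3$, $\sigma(t)=\xi t$ Kummer–Witt case one finds $y_0y_1y_2$ central although $3\nmid 1$ — so this step genuinely requires restricting to sufficiently generic $q_i$ (or else enlarging the stated centre). This is the one non-formal ingredient.

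Crucially, the easy inclusion $C_0\subseteq\zeta(S)$ is all that the remaining assertions need, so finiteness and the PI property are robust. The monomials $y^{\mathbf a}$ with $0\le a_i<N$ clearly span $S$ as a $C_0$-module, so $S$ is a finite module over the central commutative subalgebra $C_0$, and a fortiori over $\zeta(S)$. By the standard fact that a ring finitely generated as a module over a central subring satisfies a polynomial identity, $S$ is PI.

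Finally, for the order statement I would note that $S$ is Noetherian, being an iterated skew-polynomial extension of the Noetherian ring $B$ (skew Hilbert basis theorem). If in addition $S$ is prime, then $\zeta(S)$ is a domain and $S$ is prime Noetherian, hence Goldie; being also PI, it falls under Posner's theorem, which exhibits a prime PI ring as an order in the simple Artinian ring $Q(S)=S\otimes_{\zeta(S)}\Frac(\zeta(S))$, finite-dimensional over $\Frac(\zeta(S))$. This is exactly the claim that $S$ is an order in its quotient ring of fractions, completing the argument modulo the centre computation flagged above.
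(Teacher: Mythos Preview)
Your approach is essentially the same as the paper's: both arguments reduce to the commutation rule $y_\ell y^{\mathbf a}=\bigl(\prod_i q_{\ell i}^{a_i}\bigr)y^{\mathbf a}y_\ell$, observe that $q_{ij}^N=1$ gives the inclusion $B[y_0^N,\dots,y_{n-1}^N]\subseteq\zeta(S)$, and then invoke the standard results (finiteness over a commutative central subring implies PI; prime PI implies order in the quotient ring, via the references to McConnell--Robson).

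The one genuine difference is that you are \emph{more} careful than the paper. The paper simply asserts ``it is straightforward to check that the centre is as claimed'' and only verifies the inclusion $C_0\subseteq\zeta(S)$. You correctly flag that the reverse inclusion $\zeta(S)\subseteq C_0$ need not hold: your Kummer--Witt counterexample with $n=3$, $q_i=\xi^i$, where $y_0y_1y_2$ is central but $N=3\nmid 1$, is valid and shows the stated description of the centre is in general too small. You are also right that this does not affect the PI and order assertions, which rest only on the easy inclusion. So your proposal is correct, matches the paper's strategy, and in fact identifies a gap in the statement that the paper does not address.
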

\begin{proof}It is straightforward to check that the centre is as claimed, and from this follows that $S$ is PI by \cite[13.1.13]{McConnellRobson}. Indeed, we have
$$\eps_i\eps_{j}^k=q_{ij}^{k}\eps_{j}^k\eps_i,\quad 0\leq i<j\leq n-1, \,\,\,k\in \N.$$ Clearly from this follows that $$\eps_i\eps_j^{m_i}=q_{ij}^{m_i}\eps_j^{m_i}\eps_i \iff \eps_i\eps_j^{m_i}=\eps_j^{m_i}\eps_i.$$ If $N$ is the least common multiple of the $m_i$ then $q_{ij}^N=1$, for all $0\leq i<j\leq n-1$. From this the first two claims follow. The third claim, that $S$ is an order in its quotient ring of fractions, follows from \cite[13.6.6]{McConnellRobson}.
\end{proof}
The fact that $\mathrm{gr}(\eW_\sigma)=S$ has the following consequence.
\begin{prop}Let $B$ be a domain (noetherian). Then the $B$-algebra $\eW_\sigma$ is a domain (noetherian). 
\end{prop}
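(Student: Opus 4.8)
The plan is to exploit the identification $\mathrm{gr}(\eW_\sigma) = S$ recorded just above and to transfer both properties---being a domain and being Noetherian---from the associated graded algebra $S$ down to the filtered algebra $\eW_\sigma$. The filtration I have in mind is the standard one, assigning degree one to each generator $\eps_i$ and degree zero to $B$. The defining relation $\eps_i\eps_j - q_i^{-1}q_j\eps_j\eps_i - \sum_k (1-q_i^{-1}q_j)a^k_{ij}\eps_k$ then has its bracket part (degree two) dominating the linear right-hand side (degree one), so in the associated graded the relations degenerate to $\eps_i\eps_j = q_i^{-1}q_j\,\eps_j\eps_i$---exactly the commutation relations of the iterated Ore extension $S = B[y_0;\sigma_0]\cdots[y_{n-1};\sigma_{n-1}]$ with $\sigma_i(y_j) = q_{ij}y_j$. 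This filtration is positive ($F_{-1}=0$) and exhaustive, hence separated, which is precisely what the lifting arguments below require.

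First I would establish the two properties for $S$ itself. Each $q_{ij} = q_i^{-1}q_j$ is a unit of $B$, so each $\sigma_i$ is a $B$-algebra automorphism, and every stage of the iterated construction is thus an Ore extension over an automorphism. That an Ore extension $R[y;\sigma]$ over a domain $R$ with $\sigma$ injective is again a domain is classical: the leading coefficient of a product of nonzero skew polynomials is a product of one leading coefficient with a $\sigma$-power of another, hence nonzero in a domain. Inducting on the number of variables gives that $S$ is a domain whenever $B$ is. Likewise the skew Hilbert basis theorem shows $R[y;\sigma]$ is left and right Noetherian when $R$ is and $\sigma$ is an automorphism, so by the same induction $S$ is Noetherian whenever $B$ is.

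Finally I would invoke the two standard transfer principles for filtered rings. If $\mathrm{gr}(R)$ is a domain then so is $R$: for nonzero $a,b\in R$ one has $\mathrm{gr}(a),\mathrm{gr}(b)\neq 0$ by separatedness, and $\mathrm{gr}(ab) = \mathrm{gr}(a)\mathrm{gr}(b)\neq 0$ forces $ab\neq 0$. If $\mathrm{gr}(R)$ is left (resp.\ right) Noetherian then $R$ is as well, by \cite[Theorem 1.6.9]{McConnellRobson}. Applying both to $\mathrm{gr}(\eW_\sigma) = S$ yields the claim.

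I do not expect a genuine obstacle: the content is entirely encoded in the identification $\mathrm{gr}(\eW_\sigma)=S$, which is already granted. The only points demanding care are bookkeeping ones---confirming that the standard filtration really is separated so that leading symbols are well defined, and confirming that each $\sigma_i$ is an automorphism (which hinges on $q_i\in B^\times$) so that the Ore-extension lemmas apply in the iterated setting rather than only for a single injective endomorphism.
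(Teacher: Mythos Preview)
Your proposal is correct and follows essentially the same approach as the paper: transfer the domain and Noetherian properties from $\mathrm{gr}(\eW_\sigma)=S$ (an iterated Ore extension of $B$) back to $\eW_\sigma$ via the separated standard filtration. You supply more detail than the paper's terse argument---in particular the verification that each $\sigma_i$ is an automorphism and the explicit invocation of the skew Hilbert basis theorem---but the strategy is identical.
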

\begin{proof}
	This follows from the fact that the standard filtration is separated, which implies that $\mathrm{gr}(\eW_\sigma)$ is a domain (noetherian) if and only if $\eW_\sigma$ is a domain (noetherian). Since $\mathrm{gr}(\eW_\sigma)$ is an iterated Ore extension of a domain (noetherian ring) it is itself a domain (noetherian ring). 
\end{proof}An \emph{admissible} (commutative) ring (or scheme) is a ring which is of finite type  over a field or excellent Dedekind domain. 
\begin{prop}Let $B$ be an admissible domain and endow $\eW_\sigma$ with the standard filtration (with generators in degree one). Then every finite $\eW_\sigma$-module is generically free.
\end{prop}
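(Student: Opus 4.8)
The plan is to deduce generic freeness from the stronger fact that $\eW_\sigma$ is a \emph{strongly Noetherian} $B$-algebra, and then to invoke generic flatness for such algebras. First I would record two structural facts already at hand: for the standard filtration the associated graded ring of $\eW_\sigma$ is the iterated skew-polynomial extension $S=B[y_0;\sigma_0]\cdots[y_{n-1};\sigma_{n-1}]$, and $B$ is central in $\eW_\sigma$ (the defining relations commute with $B$). Since $B$ is a domain and, by the preceding proposition, $\eW_\sigma$ is a domain, every nonzero $s\in B$ is a non-zero divisor in $\eW_\sigma$; moreover $\eW_\sigma[s^{-1}]=\eW_\sigma\otimes_B B[s^{-1}]$ and $M[s^{-1}]=M\otimes_{\eW_\sigma}\eW_\sigma[s^{-1}]=M\otimes_B B[s^{-1}]$. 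Thus it suffices to produce a nonzero $s\in B$ for which $M\otimes_B B[s^{-1}]$ is free over $B[s^{-1}]$, i.e. to prove generic freeness of $M$ over the base $B$.

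Next I would establish that $\eW_\sigma$ is strongly Noetherian over $B$. Because $B$ is admissible — of finite type over a field or over an excellent Dedekind domain — it is itself strongly Noetherian, which is exactly the content of the Artin--Small--Zhang theorem on admissible commutative rings. Strong Noetherianity is inherited by skew-polynomial extensions along the automorphisms $\sigma_i$, so the iterated extension $S$ is strongly Noetherian. Finally, strong Noetherianity passes from the associated graded ring to the filtered ring: the filtration pieces $F_n$ of $\eW_\sigma$ are free over $B$ (their images span $S$), so for any commutative Noetherian $B$-algebra $R'$ the short exact sequences $0\to F_{n-1}\to F_n\to \mathrm{gr}_n\to 0$ remain exact after $\otimes_B R'$, giving $\mathrm{gr}(\eW_\sigma\otimes_B R')\cong S\otimes_B R'$, which is Noetherian. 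A positively filtered ring with Noetherian associated graded is Noetherian, hence $\eW_\sigma\otimes_B R'$ is Noetherian for all such $R'$, i.e. $\eW_\sigma$ is strongly Noetherian.

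With strong Noetherianity in hand, the final step is to apply the generic flatness theorem of Artin, Small and Zhang: over a commutative Noetherian domain $B$, every finitely generated module $M$ over a strongly Noetherian $B$-algebra is generically flat over $B$, so there is a nonzero $s\in B$ with $M\otimes_B B[s^{-1}]$ flat, and being finitely generated over the domain $B[s^{-1}]$ it is then free over $B[s^{-1}]$. By the reduction of the first paragraph this $s$, a non-zero divisor in $\eW_\sigma$, witnesses generic freeness of $M$ as an $\eW_\sigma$-module, completing the argument.

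The main obstacle I anticipate is the strong Noetherian step rather than the final application: one must be sure that admissibility of $B$ genuinely delivers strong Noetherianity (the deep external input), that the Ore extensions by the $\sigma_i$ preserve the property, and — most delicately — that base change along an arbitrary commutative Noetherian $R'$ commutes with the standard filtration so that $\mathrm{gr}(\eW_\sigma\otimes_B R')\cong S\otimes_B R'$; this is where the freeness of the filtration pieces over $B$ is essential. The passage from flatness to freeness over the domain $B[s^{-1}]$ and the central-localization bookkeeping identifying $M[s^{-1}]$ with $M\otimes_B B[s^{-1}]$ are then routine.
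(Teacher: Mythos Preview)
Your approach is essentially the paper's: show $\mathrm{gr}(\eW_\sigma)=S$ is strongly Noetherian via Artin--Small--Zhang, then apply their generic-freeness theorem. The paper does this in two lines (citing \cite[Prop.~4.4]{ArtinSmallZhang} for strong Noetherianity of the Ore extension and \cite[Thm.~0.3]{ArtinSmallZhang} to conclude), whereas you unpack the strong-Noetherianity step and carry it up from $\mathrm{gr}(\eW_\sigma)$ to $\eW_\sigma$ itself. That extra work is harmless but not needed: Theorem~0.3 in \cite{ArtinSmallZhang} already applies to a filtered algebra whose associated graded is strongly Noetherian.

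There is, however, a genuine slip in your final step. You cite the Artin--Small--Zhang result as giving only generic \emph{flatness} of $M$ over $B$, and then write ``being finitely generated over the domain $B[s^{-1}]$ it is then free over $B[s^{-1}]$''. But $M$ is finitely generated over $\eW_\sigma$, not over $B$; since $\eW_\sigma$ is not finite over $B$, there is no reason for $M\otimes_B B[s^{-1}]$ to be a finite $B[s^{-1}]$-module (take $M=\eW_\sigma$ itself). So the passage ``flat $+$ finitely generated over a domain $\Rightarrow$ free'' does not apply here. The repair is immediate: \cite[Thm.~0.3]{ArtinSmallZhang} in fact yields generic \emph{freeness} over $B$ directly, so this last reduction is unnecessary and should simply be dropped.
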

\begin{proof}
	We have seen that $\mathrm{gr}(\eW_\sigma)$ is an Ore extension and by \cite[Proposition 4.4]{ArtinSmallZhang}, $\mathrm{gr}(\eW_\sigma)$ is strongly noetherian. We can now apply \cite[Theorem 0.3]{ArtinSmallZhang} to conclude. 
\end{proof}
\begin{prop}Suppose $\eW_\sigma$ is Auslander-regular. Then $K_0(\eW_\sigma)\approx K_0(B)$, where $K_0(R)$ is the Grothendieck group of projective $R$-modules. 
\end{prop}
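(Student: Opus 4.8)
The plan is to reduce the computation to the associated graded ring $S=\mathrm{gr}(\eW_\sigma)$, which by the discussion preceding Proposition~\ref{prop:Witt_PI} is the iterated Ore extension $B[y_0;\sigma_0]\cdots[y_{n-1};\sigma_{n-1}]$, each $\sigma_i$ being a \emph{graded automorphism} since $\sigma_i(y_j)=q_{ij}y_j$ with $q_{ij}=q_i^{-1}q_j\in B^\times$. First I would record the regularity that makes $K_0$ manageable. Auslander-regularity gives $\eW_\sigma$ finite global dimension; and since $\eW_\sigma$ is \emph{free} as a $B$-module on the monomials in $\eps_0,\dots,\eps_{n-1}$, it is faithfully flat over $B$, so finite global dimension descends to the commutative Noetherian ring $B$. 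Hence $B$ is regular, and $S$, being an iterated Ore extension of $B$ by automorphisms, is regular Noetherian as well. With $\eW_\sigma$, $S$ and $B$ all regular Noetherian, the resolution theorem identifies $K_0$ with the Grothendieck group $G_0$ of all finitely generated modules in each case, so it suffices to prove $G_0(\eW_\sigma)\cong G_0(B)$.

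The second step is the passage from the filtered ring to its associated graded. I would form the Rees ring $\widetilde{\eW}_\sigma=\bigoplus_i F_i(\eW_\sigma)\,t^i$ attached to the standard filtration, in which $t$ is a central nonzerodivisor with $\widetilde{\eW}_\sigma/(t)\cong S$ and $\widetilde{\eW}_\sigma/(t-1)\cong\eW_\sigma$, while inverting $t$ yields $\eW_\sigma[t,t^{-1}]$. Applying the localization sequence in $G$-theory together with homotopy invariance of $G_0$ gives the classical filtered--graded comparison $G_0(\eW_\sigma)\cong G_0(S)$, valid for positively filtered rings with Noetherian associated graded (the standard filtration is exhaustive and separated, and $S$ is Noetherian). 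This is a standard argument which I would cite rather than reprove.

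Finally I would compute $G_0(S)$ for the iterated Ore extension. By the fundamental theorem for skew polynomial rings over a left Noetherian ring, adjoining an Ore variable with respect to an automorphism leaves $G$-theory unchanged, $G_0(R[y;\sigma])\cong G_0(R)$; iterating this $n$ times strips off $y_0,\dots,y_{n-1}$ and gives $G_0(S)\cong G_0(B)$. Combining the three steps yields
$$K_0(\eW_\sigma)\cong G_0(\eW_\sigma)\cong G_0(S)\cong G_0(B)\cong K_0(B),$$
the outer isomorphisms being the resolution theorem for the regular rings $\eW_\sigma$ and $B$.

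The main obstacle I anticipate is the second step: making the filtered-to-graded isomorphism precise at the level of $G_0$. One must check that the standard filtration is exhaustive and separated with Noetherian associated graded, set up the Rees ring so that $t$ is genuinely central and regular, and feed the localization sequence and homotopy invariance inputs in exactly the right form; it is here that one leans most heavily on external K-theoretic machinery. By contrast, the fundamental theorem for the Ore extensions in the third step only needs the $\sigma_i$ to be automorphisms and $B$ to be Noetherian regular, both of which are already secured in the first step, so that part is routine once the filtered--graded comparison is in hand.
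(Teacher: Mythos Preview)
Your argument is correct in outline, but it takes a considerably longer route than the paper's. The paper's proof is two lines: Auslander-regularity gives finite global dimension, hence every finitely generated $\eW_\sigma$-module has finite projective dimension, and then Quillen's theorem (McConnell--Robson, Theorem~12.6.13) applied to the standard filtration with $F_0\eW_\sigma=B$ immediately yields $K_0(B)\xrightarrow{\sim}K_0(\eW_\sigma)$. No separate regularity of $B$, no Rees ring, no $G$-theory comparison, and no fundamental theorem for Ore extensions are invoked.

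What you have done is essentially to \emph{reprove} Quillen's theorem in this special case: your Rees-ring/localization step is exactly the filtered-to-graded passage underlying Quillen's argument, and your iterated use of the fundamental theorem for $R[y;\sigma]$ is the graded-to-base step. This has the virtue of being self-contained and of making the mechanism visible, but it also forces you into side issues (such as arguing that regularity descends from $\eW_\sigma$ to $B$ via faithful flatness, which is not entirely trivial and which the paper's approach simply does not need). If you want to keep your version, you could streamline it by noting that Quillen's theorem packages all three of your steps at once; conversely, your unpacking would be appropriate if the goal were to avoid citing that black box.
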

\begin{proof}
	Since $\eW_\sigma$ is Auslander-regular the global dimension is finite. This implies that every cyclic $\eW_\sigma$-module has finite projective dimension (e.g., \cite[7.1.8]{McConnellRobson}). From this the result now follows from Quillen's theorem \cite[Theorem 12.6.13]{McConnellRobson}.
\end{proof}
Notice that $K_0(\mathfrak{o}_K)=\mathrm{Pic}(\mathfrak{o}_K)\oplus \Z$, the isomorphism given by the Chern character:
$$\mathrm{ch}:\,\,\, K_0(\mathfrak{o}_K)\longrightarrow \mathrm{Pic}(\mathfrak{o}_K)\oplus \Z,\quad E\mapsto \det(E)\oplus \mathrm{rk}(E).$$ Hence, if $B=\mathfrak{o}_K$, we get 
\begin{corollary}When $B=\mathfrak{o}_K$, we have $K_0(\eW_\sigma)\approx\mathrm{Pic}(\mathfrak{o}_K)\oplus \Z$. 
\end{corollary}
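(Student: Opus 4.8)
The plan is to obtain the corollary by simply chaining the preceding proposition (Auslander-regular $\Rightarrow K_0(\eW_\sigma)\approx K_0(B)$) with the classical computation of $K_0$ for a Dedekind domain, specialized to $B=\mathfrak{o}_K$. There is essentially no new argument to run; the work is entirely in checking that the hypotheses of the two results already established in this subsection are in force.

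First I would verify the standing hypothesis of the $K_0$-proposition, namely that $\eW_\sigma$ is Auslander-regular. Since $\mathfrak{o}_K$ is the ring of integers of a number field, it is a Dedekind domain; hence each localization $(\mathfrak{o}_K)_{\mathfrak{p}}$ at a maximal ideal is a discrete valuation ring, which is a regular local ring, so $\mathfrak{o}_K$ is a regular algebra. Invoking the earlier proposition (``$B$ regular $\Rightarrow \eW_\sigma$ Auslander-regular'') with $B=\mathfrak{o}_K$, I conclude that $\eW_\sigma$ is Auslander-regular, and the $K_0$-proposition then yields $K_0(\eW_\sigma)\approx K_0(\mathfrak{o}_K)$.

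It remains to identify $K_0(\mathfrak{o}_K)$, which is exactly the fact recorded in the ``Notice that\ldots'' remark preceding the statement. Here I would appeal to Steinitz's structure theorem: every finitely generated projective $\mathfrak{o}_K$-module is isomorphic to $\mathfrak{o}_K^{\,r-1}\oplus\mathcal{J}$ for a fractional ideal $\mathcal{J}$ and an integer $r=\mathrm{rk}$, with two such being isomorphic precisely when their ranks and ideal classes agree. This makes the Chern character $E\mapsto\det(E)\oplus\mathrm{rk}(E)$ an isomorphism $K_0(\mathfrak{o}_K)\xrightarrow{\sim}\mathrm{Pic}(\mathfrak{o}_K)\oplus\Z$. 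Composing with the isomorphism from the previous paragraph gives $K_0(\eW_\sigma)\approx\mathrm{Pic}(\mathfrak{o}_K)\oplus\Z$, as claimed.

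As for the difficulty: there is no genuine obstacle, since both of the substantive inputs are supplied by results already proved in the section and by the standard theory of Dedekind domains. The single point that carries any content is the identification $K_0(\mathfrak{o}_K)\cong\mathrm{Pic}(\mathfrak{o}_K)\oplus\Z$ via Steinitz; everything else is the routine bookkeeping of confirming that regularity of $\mathfrak{o}_K$ (automatic for a Dedekind domain) propagates to the Auslander-regularity of $\eW_\sigma$ needed to apply the $K_0$-comparison.
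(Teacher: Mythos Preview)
Your proof is correct and follows exactly the approach the paper intends: the corollary is stated without proof because it is an immediate combination of the preceding proposition (regularity of $B$ gives Auslander-regularity of $\eW_\sigma$, hence $K_0(\eW_\sigma)\approx K_0(B)$) with the Chern-character identification $K_0(\mathfrak{o}_K)\cong\mathrm{Pic}(\mathfrak{o}_K)\oplus\Z$ noted just before the corollary. Your verification that $\mathfrak{o}_K$ is regular (being Dedekind) to trigger the Auslander-regularity proposition is precisely the missing link the paper leaves implicit.
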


Put $\eW_{\sigma,\pp}:=\bar{\eW}_\sigma \otimes_\Lambda k(\pp)$. Assume that $\bar{B}_{\pp}:=B\otimes_\Lambda k(\pp)$ is affine and filtered by the standard filtration with generators $b_1, b_2, \dots, b_m\in \mathrm{Fil^1}\bar{B}_{\pp}$. Assume also that $\bar q_i\neq 0$, i.e., that $q_i\notin \pp$. 

We now form the Ore extension $\bar{S}_\pp:=\bar{B}_{\pp}[\bar y_0;\bar \sigma_0][\bar y_1;\bar \sigma_1]\cdots[\bar y_{n-1};\bar\sigma_{n-1}]$ with $\bar\sigma_i(\bar y_j)=\bar q_{ij}\bar y_j$, with $\bar q_{ij}:=\bar q^{-1}_i\bar q_j$. Notice that $S$ is in fact $\mathrm{gr}(\bar{\eW}_{\sigma,\pp})$ with $\bar{\eW}_{\sigma,\pp}$ given the standard filtration. 
\begin{prop}Assume that either $\mathrm{gr}^{\mathrm{Fil}}(\bar{B}_{\pp})$ or $\bar S_\pp$ is Cohen--Macaulay. Then $\bar{\eW}_{\sigma,\pp}$ is Cohen--Macaulay. In addition, $\bar S_\pp$ is a maximal order in its quotient ring of fractions. 
\end{prop}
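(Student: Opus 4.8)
The plan is to exploit the identity $\mathrm{gr}(\bar{\eW}_{\sigma,\pp})=\bar S_\pp$ for the standard filtration, together with the fact that $\bar S_\pp$ is an iterated Ore extension of $\bar B_\pp$, and then to transfer both properties from the associated graded ring down to the filtered ring $\bar{\eW}_{\sigma,\pp}$. First I would record the structural facts that make this transfer legitimate: the standard filtration (generators $\eps_i$ in degree one, $\bar B_\pp$ in degree zero) is exhaustive and separated, and its associated graded ring $\bar S_\pp$ is Noetherian, being an iterated Ore extension of the Noetherian ring $\bar B_\pp$ by the scalar automorphisms $\bar\sigma_i$. This is precisely the situation in which ring-theoretic finiteness properties lift from $\mathrm{gr}$ to the filtered ring.

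For the Cohen--Macaulay assertion I would argue as follows. Both the grade number $j$ and the Gelfand--Kirillov dimension are compatible with the standard filtration, so the Cohen--Macaulay condition passes from $\mathrm{gr}(\bar{\eW}_{\sigma,\pp})=\bar S_\pp$ to $\bar{\eW}_{\sigma,\pp}$ itself by the usual filtered-to-graded transfer (cf.\ \cite{McConnellRobson}); this immediately settles the hypothesis that $\bar S_\pp$ be Cohen--Macaulay. Under the alternative hypothesis that $\mathrm{gr}^{\mathrm{Fil}}(\bar B_\pp)$ be Cohen--Macaulay, I would give $\bar S_\pp$ the filtration induced from $\bar B_\pp$, so that $\mathrm{gr}(\bar S_\pp)$ is again an iterated Ore extension, now over $\mathrm{gr}^{\mathrm{Fil}}(\bar B_\pp)$ by the induced scalar automorphisms. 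Since each $\bar\sigma_i$ acts by the unit $\bar q_{ij}$ on the adjoined variable and trivially on the base, a single Ore extension raises $\mathrm{GKdim}$ by one while keeping the grade bookkeeping intact, so the Cohen--Macaulay property is preserved at each of the $n$ steps; hence $\mathrm{gr}(\bar S_\pp)$ is Cohen--Macaulay, whence $\bar S_\pp$ and then $\bar{\eW}_{\sigma,\pp}$ are Cohen--Macaulay by two applications of the lifting principle.

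For the maximal-order statement I would first note that $\bar S_\pp$ is a Noetherian domain, in particular prime: it is an iterated Ore extension of the domain $\bar B_\pp$ by automorphisms, and such extensions preserve the domain and Noetherian properties, exactly as used in the preceding propositions. Being prime and Noetherian it is Goldie, so it possesses a simple Artinian classical ring of quotients $\Frac(\bar S_\pp)$ (the computation of Proposition \ref{prop:Witt_PI} applies verbatim to $\bar S_\pp$, since $\bar q_i\neq 0$ forces the $\bar q_i$ to be roots of unity, making $\bar S_\pp$ finite over its centre). I would then invoke the theorem that a skew polynomial extension $R[y;\theta]$ of a Noetherian maximal order $R$ by an automorphism $\theta$ is again a Noetherian maximal order, and apply it $n$ times starting from $\bar B_\pp$ to conclude that $\bar S_\pp$ is a maximal order in $\Frac(\bar S_\pp)$.

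The main obstacle is the base case of this last argument, namely that $\bar B_\pp$ itself be a maximal order, equivalently an integrally closed Noetherian domain. Normality of $Y$ does not survive reduction modulo $\pp$ in general, so integral closedness of the fibre $\bar B_\pp$ is \emph{not} automatic from the standing hypotheses; one must either restrict to primes $\pp$ avoiding the non-normal locus of the fibre, or assume $\bar B_\pp$ regular (as in the Auslander-regular discussion above), in which case integral closedness is immediate. A secondary point to check is that each $\bar\sigma_i$ preserves the integral closure of $\bar B_\pp$, so that the skew-polynomial maximal-order theorem applies at every stage; but since $\bar\sigma_i$ fixes $\bar B_\pp$ pointwise and merely rescales the previously adjoined variables, this compatibility is transparent, and the essential difficulty remains the normality of $\bar B_\pp$.
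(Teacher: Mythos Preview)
Your treatment of the Cohen--Macaulay assertion is in the same spirit as the paper's: the paper simply invokes \cite[Theorem 3]{GomezLobillo}, and your filtered-to-graded transfer together with the step-by-step Ore extension argument is essentially what lies behind that reference. So on this half you are aligned with the paper, only more explicit.

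For the maximal-order statement, however, you take a genuinely different route. The paper appeals to \cite{Stafford}, i.e., to the homological criterion that a Noetherian Auslander-regular Cohen--Macaulay ring is a maximal order in its quotient ring. Since $\bar S_\pp$ is Cohen--Macaulay (either by hypothesis or by your own argument) and Auslander-regular by the same Gomez--Lobillo machinery used earlier, Stafford's theorem applies directly. Your approach instead climbs the tower $\bar B_\pp[\bar y_0;\bar\sigma_0]\cdots[\bar y_{n-1};\bar\sigma_{n-1}]$ using the fact that a skew polynomial extension of a maximal order by an automorphism is again a maximal order. This is a perfectly legitimate alternative, but, as you yourself note, it forces you to know that $\bar B_\pp$ is integrally closed---something not guaranteed by the standing hypotheses after reduction modulo $\pp$. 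The paper's route via Stafford sidesteps this entirely: it needs no normality of the base, only the homological properties of $\bar S_\pp$ itself, which are already in hand. So the obstacle you flag is real for your method but simply does not arise in the paper's argument.

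A minor side remark: your parenthetical claim that $\bar q_i\neq 0$ ``forces the $\bar q_i$ to be roots of unity'' is only automatic over a finite residue field (special fibres) or when $\sigma$ has finite order; it is not a consequence of non-vanishing alone. This does not affect the main line of either argument, but you should not rely on it without saying which case you are in.
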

\begin{proof}
The assumptions imply that we can apply \cite[Theorem 3]{GomezLobillo}. The last statement follows from \cite{Stafford}. 
\end{proof}
%\begin{remark}
%I suspect that the CM-property holds more generally but I am only able to prove it in the above cases. However, we have the following proposition. 
%\end{remark}
%\begin{prop}
%Let $B$ be affine (and commutative) over a field $K$, given a filtration (e.g., the standard one). Assume that $\mathrm{gr}^s(B)$ is graded CM. Then $B$ is CM also. 
%\end{prop}
%Therefore, if we want to use the associated graded to $B$ to prove the CM-property of $\eJac_b(\xi)$ in any more generality than above, me must assume that $B$ is also CM. 
%\begin{proof}By \cite[Lemma 4.3]{StaffordZhang} $\mathrm{gr}^s(B)$ is CM (not only graded CM) since $B$ is a polynomial identity ring. Then \cite[Proposition 4.4]{StaffordZhang} shows that $B$ must also be CM. 
%\end{proof}

	We now specialize the above construction. Let $A$ be the finite $B$-algebra 
	$$A:=B[t]/(t^n-b)=Be_0\oplus Be_1\oplus Be_2\oplus\cdots\oplus Be_{n-1},$$with $e_i:=t^i$ and $b\in B$.   Also, we recall the assumption that $\xi\in\Lambda$. 
	
From Proposition \ref{prop:tame_algebra} we see that 
	$$\eKW_b(\xi):=\mathcal{E}(\KW_b^A)=\frac{B\{\eps_0,\eps_1,\dots,\eps_{n-1}\}}{\big(
	\eps_i\eps_j-\xi^{j-i}\eps_j\eps_i-b^\circlearrowright(1-\xi^{j-i})\eps_{\{i+j\!\!\!\mod n\}}\big)}, \quad b\in B,$$and so, forming the non-commutative spectrum 
	\begin{align*}
	\underline{\eKW_b(\xi)}&:=\ncspec\big(\eKW_b(\xi)\big)\\
	&=\ncspec\Big(\frac{B\{\eps_0,\eps_1,\dots,\eps_{n-1}\}}{\big(\eps_i\eps_j-\xi^{j-i}\eps_j\eps_i-b^\circlearrowright(1-\xi^{j-i})\eps_{\{i+j\!\!\!\mod n\}}\big)}\Big),\end{align*}we get a non-commutative arithmetic scheme over $\Lambda$. It is reasonable to call this the \emph{non-commutative arithmetic Kummer-Witt scheme attached to the cover}. 
	
We will now study a canonical subalgebra of $\eKW_b(\xi)$ in some detail and show that it has some remarkable properties. First, notice that since $\xi^n=1$, we have that $\xi^{-(n-1)}=\xi$ and $\xi^{-(n-2)}=\xi^2$. We put
$$\eJac_b(\xi):=\mathcal{E}\big(\Jac_{b,\sigma}^A)\big)=
\frac{B\{\eps_0,\eps_1,\eps_{n-1}\}}{\begin{pmatrix}
 \eps_0\eps_1-\xi\eps_1\eps_0- (1-\xi)\eps_1\\
  \eps_{n-1}\eps_0-\xi\eps_0\eps_{n-1}- (1-\xi)\eps_{n-1}\\
  \eps_{n-1}\eps_1-\xi^2\eps_1\eps_{n-1}- b(1-\xi^2)\eps_0
\end{pmatrix}}.$$
\begin{prop}
	The algebra $\eJac_b(\xi)$ is isomorphic to an iterated Ore extensions and so have a Poincar\'e--Birkhoff--Witt basis.
\end{prop}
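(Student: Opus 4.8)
The plan is to exhibit $\eJac_b(\xi)$ as an iterated Ore extension
\[
\eJac_b(\xi)\;\cong\;B[\eps_0]\,[\eps_1;\sigma_2]\,[\eps_{n-1};\sigma_3,\delta_3],
\]
built up in the order $\eps_0\prec\eps_1\prec\eps_{n-1}$, and then to quote the Poincar\'e--Birkhoff--Witt property that every Ore extension enjoys: each step $R\mapsto R[x;\sigma,\delta]$ is free as a left $R$-module on the powers of $x$ (cf.\ \cite{McConnellRobson}), so the iterated extension is $B$-free on the monomials $\eps_0^{\,i}\eps_1^{\,j}\eps_{n-1}^{\,k}$. The key observation driving the choice of order is that each defining relation, read as a rule moving the \emph{larger} generator to the right, has exactly the shape of an Ore step whose ``remainder'' lands in the subalgebra generated by the \emph{smaller} generators — which is where the special values of the structure constants are used.

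Concretely, I would first set $R_1:=B[\eps_0]$ and rewrite the relation $\eps_0\eps_1-\xi\eps_1\eps_0-(1-\xi)\eps_1=0$ as $\eps_1\eps_0=\sigma_2(\eps_0)\eps_1$, where $\sigma_2(\eps_0):=\xi^{-1}\eps_0-\xi^{-1}(1-\xi)$. Since $\xi$ is a unit this is an affine, hence invertible, $B$-algebra automorphism of $R_1$, so $R_2:=R_1[\eps_1;\sigma_2]$ is a genuine (purely skew, $\delta=0$) Ore extension. Next, on $R_2$ I would define $\sigma_3$ by $\sigma_3(\eps_0):=\xi\eps_0+(1-\xi)$ and $\sigma_3(\eps_1):=\xi^2\eps_1$, and a twisted derivation $\delta_3$ by $\delta_3(\eps_0):=0$ and $\delta_3(\eps_1):=b(1-\xi^2)\eps_0$. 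With these choices the remaining two relations read precisely $\eps_{n-1}\eps_0=\sigma_3(\eps_0)\eps_{n-1}+\delta_3(\eps_0)$ and $\eps_{n-1}\eps_1=\sigma_3(\eps_1)\eps_{n-1}+\delta_3(\eps_1)$, so that $R_3:=R_2[\eps_{n-1};\sigma_3,\delta_3]$ has exactly the three relations of $\eJac_b(\xi)$ as its defining relations.

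The main obstacle is the compatibility check at the top step: $\sigma_3$ and $\delta_3$ are prescribed only on the generators of $R_2$, and one must verify that they descend to a well-defined automorphism and $\sigma_3$-derivation of the \emph{noncommutative} ring $R_2$, i.e.\ that they respect its single skew relation $\eps_1\eps_0=\sigma_2(\eps_0)\eps_1$. For $\sigma_3$ this amounts to the identity $\sigma_3(\eps_1)\sigma_3(\eps_0)=\sigma_3\bigl(\sigma_2(\eps_0)\bigr)\sigma_3(\eps_1)$, and for $\delta_3$ to $\delta_3\bigl(\eps_1\eps_0-\sigma_2(\eps_0)\eps_1\bigr)=0$ via the twisted Leibniz rule $\delta_3(xy)=\sigma_3(x)\delta_3(y)+\delta_3(x)y$. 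Both collapse, after a short computation, to scalar identities in $\xi$ (both sides of the $\sigma_3$-check reduce to $\xi^{2}\eps_0\eps_1$, and both sides of the $\delta_3$-check to $b(1-\xi^2)\eps_0^{\,2}$); this is the only place where the precise coefficients $\xi,\xi^2,(1-\xi),b(1-\xi^2)$ are essential, and it is the heart of the argument.

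Finally I would deduce the isomorphism and the PBW basis. The three relations let one reorder any word into a $B$-linear combination of the monomials $\eps_0^{\,i}\eps_1^{\,j}\eps_{n-1}^{\,k}$, so these span $\eJac_b(\xi)$; the canonical surjection $\eJac_b(\xi)\twoheadrightarrow R_3$ sends them to the (already established) $B$-basis of the Ore extension, whence they are linearly independent in $\eJac_b(\xi)$ as well and the surjection is an isomorphism. A cleaner packaging, which I would mention, is the unipotent change of variable $\eps_0\mapsto\eps_0-1$: it converts $\sigma_2,\sigma_3$ into diagonal scalings ($\eps_0\mapsto\xi^{\mp1}\eps_0$, $\eps_1\mapsto\xi^2\eps_1$), exhibiting $R_2$ as a quantum plane and making the compatibility identities completely transparent, at the cost of an inhomogeneous $\delta_3$. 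Either route yields that $\eJac_b(\xi)$ is an iterated Ore extension with PBW basis $\{\eps_0^{\,i}\eps_1^{\,j}\eps_{n-1}^{\,k}\}$.
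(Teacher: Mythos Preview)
Your proposal is correct and follows essentially the same strategy as the paper: exhibit $\eJac_b(\xi)$ as an iterated Ore extension and deduce PBW from that. The differences are cosmetic. The paper first performs the change of basis $\eps_0\to(1-\xi^2)^{-1}\eps_0+1$ (your shift $\eps_0\mapsto\eps_0-1$ up to a harmless rescaling) to make the twists diagonal, and then adjoins the generators in the order $\eps_1\prec\eps_0\prec\eps_{n-1}$; you instead work directly with the original relations in the order $\eps_0\prec\eps_1\prec\eps_{n-1}$, allowing affine (not purely diagonal) twists at the intermediate steps, and only mention the simplifying shift at the end. Both routes require exactly one nontrivial compatibility check for the top extension, and your computations for $\sigma_3$ and $\delta_3$ against the relation in $R_2$ are correct (both sides indeed reduce to $\xi^2\eps_0\eps_1$ and $b(1-\xi^2)\eps_0^2$, respectively).
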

\begin{proof}
By changing basis $\eps_0\to (1-\xi^2)^{-1}\eps_0+1$ we can transform the relations to 
 \begin{align}\label{eq:envS}
 \begin{split}
\eps_0\eps_1 -\xi\eps_1\eps_0 &= 0, \\
 \eps_{n-1}\eps_0-\xi\eps_0\eps_{n-1} & = 0, \\ 
 \eps_{n-1}\eps_1-\xi^2\eps_1\eps_{n-1} & =b\eps_0+b(1-\xi^2).
 \end{split}
 \end{align}From this we construct the iterated Ore extension
 $$B[\eps_1][\eps_0;\tau][\eps_{n-1}; \omega,\Delta],$$with 
 \begin{align*}
 \tau(\eps_1)&=\xi\eps_1, & \omega(\eps_1)&=\xi^2\eps_1,&
 \Delta(\eps_1) & =b\eps_0+b(1-\xi^2),\\ 
 \omega(\eps_0)& =\xi\eps_0,&\Delta(\eps_0)&=0.
 \end{align*}
 To ensure that this is well-defined we need to verify that $\Delta$ respects the relation $\eps_0\eps_1-\xi\eps_1\eps_0=0$. This is easy and is left for the reader. The last statement follows since $\eps_0,\eps_1,\eps_2$ are linearly independent over $B$. 
 \end{proof}
 Since $  \eJac_b(\xi)$ and the algebra defined by (\ref{eq:envS}) are isomorphic, and the relations (\ref{eq:envS}) are simpler, we naturally work with this algebra instead. By abuse of notation we denote this also by $  \eJac_b(\xi)$. 
 \begin{remark}The algebras $\eKW_b(\xi)$ are \emph{not} Ore extensions in general. 
 \end{remark}

\begin{prop}The algebra $\eJac_b(\xi)$ is finite as a module over its centre $$\zeta\big(  \eJac_b(\xi)\big)=B[\eps_0^n,\eps_1^n,\eps_{n-1}^n]$$ and hence a polynomial identity (PI), if in addition $\eJac_b(\xi)$ is prime it is an order in its quotient ring of fractions. If $\eJac_b(\xi)$ is a $K$-algebra, $K$ a field, then it is even a maximal order.
\end{prop}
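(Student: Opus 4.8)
The plan is to exploit the iterated Ore extension structure already established for $\eJac_b(\xi)$, namely $\eJac_b(\xi)\cong B[\eps_1][\eps_0;\tau][\eps_{n-1};\omega,\Delta]$, and to run the argument of Proposition \ref{prop:Witt_PI}, adapted to the presence of the $\omega$-derivation $\Delta$. The Ore presentation gives a Poincar\'e--Birkhoff--Witt basis of ordered monomials $\eps_0^a\eps_1^c\eps_{n-1}^d$, so $\eJac_b(\xi)$ is a free $B$-module on these; in particular it is a Noetherian domain whenever $B$ is. The strategy is then: (1) exhibit $\eps_0^n,\eps_1^n,\eps_{n-1}^n$ as central, hence $Z_0:=B[\eps_0^n,\eps_1^n,\eps_{n-1}^n]$ as a central subalgebra; (2) deduce module-finiteness over $Z_0$ from the PBW basis; (3) upgrade ``central subalgebra'' to ``equals the centre''; (4) read off the PI and order statements from standard theorems; (5) obtain the maximal-order conclusion over a field, which I expect to be the real obstacle.

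For (1) the only nontrivial checks involve $\eps_{n-1}$, since $\eps_0$ skew-commutes cleanly with the other two generators. From $\eps_0\eps_1=\xi\eps_1\eps_0$, $\eps_{n-1}\eps_0=\xi\eps_0\eps_{n-1}$ and $\xi^n=1$ the $n$-th powers pass through the quantum relations; the one computation needing the derivation is $\eps_{n-1}\eps_1^n=\omega(\eps_1^n)\eps_{n-1}+\Delta(\eps_1^n)$. Here $\omega(\eps_1^n)=\xi^{2n}\eps_1^n=\eps_1^n$, while the twisted Leibniz rule expands $\Delta(\eps_1^n)$ into a sum whose $\eps_0\eps_1^{n-1}$- and $\eps_1^{n-1}$-coefficients are $b\sum_{i=0}^{n-1}\xi^i$ and $b(1-\xi^2)\sum_{i=0}^{n-1}\xi^{2i}$; both geometric sums vanish (the second because either $\xi^2$ is a nontrivial root of unity or $1-\xi^2=0$ when $n=2$), so $\Delta(\eps_1^n)=0$ and $\eps_1^n$ is central, and similarly for $\eps_{n-1}^n$. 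For (2), centrality of the three $n$-th powers lets one write $\eps_0^a\eps_1^c\eps_{n-1}^d=\big(\eps_0^{n\lfloor a/n\rfloor}\eps_1^{n\lfloor c/n\rfloor}\eps_{n-1}^{n\lfloor d/n\rfloor}\big)\,\eps_0^{a\bmod n}\eps_1^{c\bmod n}\eps_{n-1}^{d\bmod n}$, so the $n^3$ reduced monomials with exponents in $\{0,\dots,n-1\}$ generate $\eJac_b(\xi)$ as a $Z_0$-module.

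For (3) I would argue by leading terms in the PBW filtration. The associated graded $\mathrm{gr}\,\eJac_b(\xi)$ is the quantum affine three-space with relations $\eps_0\eps_1=\xi\eps_1\eps_0$, $\eps_{n-1}\eps_0=\xi\eps_0\eps_{n-1}$, $\eps_{n-1}\eps_1=\xi^2\eps_1\eps_{n-1}$, in which $\eps_0^a\eps_1^c\eps_{n-1}^d$ is central precisely when $c\equiv d$ and $a+2c\equiv0\pmod n$. Since $\mathrm{gr}(\zeta)\subseteq\zeta(\mathrm{gr})$, the leading monomial of any central element satisfies these congruences; the remaining point is that, for $b\ne0$, the surviving non-$n$-th-power candidates (e.g.\ the leading term $\eps_0^{\,n-2}\eps_1\eps_{n-1}$) cannot be completed to central elements, because the inhomogeneous $\Delta$-contributions they generate are nonzero and cannot be cancelled by lower-order corrections; tracking these obstructions forces $n\mid a,\,n\mid c,\,n\mid d$ and hence $\zeta(\eJac_b(\xi))=Z_0$. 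This is delicate bookkeeping, but it is not needed downstream: (4) and (5) require only that $Z_0$ is central and that the algebra is module-finite over it. Being module-finite over a commutative Noetherian central subring, $\eJac_b(\xi)$ is PI by \cite[13.1.13]{McConnellRobson}; if in addition it is prime it is Noetherian and PI, hence an order in its quotient ring of fractions by \cite[13.6.6]{McConnellRobson}.

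The main obstacle is the maximal-order claim over a field $K$. I would not attack $\eJac_b(\xi)$ directly but transfer from the associated graded ring. The skew-polynomial ring $\mathrm{gr}\,\eJac_b(\xi)$ is a prime Noetherian iterated Ore extension of $K$, and by the G\'omez--Lobillo criteria \cite[Theorem 1 and Corollary 2]{GomezLobillo} together with \cite[Theorem 3]{GomezLobillo} it is Auslander-regular and Cohen--Macaulay; a prime Noetherian Auslander--Gorenstein, Cohen--Macaulay ring is a maximal order in its quotient ring by \cite{Stafford}, exactly the input used for $\bar S_\pp$ earlier. The remaining work is the filtered-to-graded lifting: one must show that ``maximal order'', and not merely the homological regularity, descends from $\mathrm{gr}\,\eJac_b(\xi)$ to $\eJac_b(\xi)$ under the standard separated exhaustive PBW filtration. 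Verifying that the maximal-order property genuinely lifts is the crux, and is where I expect the real difficulty to lie.
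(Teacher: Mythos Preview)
Your approach is essentially the paper's: establish centrality of the $n$-th powers by the same induction/Leibniz computation (the paper writes out the formulas for $\eps_1\eps_{n-1}^k$ and $\eps_{n-1}\eps_1^k$ explicitly and observes the geometric sums vanish at $k=n$), deduce module-finiteness and PI via \cite[13.1.13]{McConnellRobson}, and the order statement via \cite[13.6.6]{McConnellRobson}. You are actually more careful than the paper on step (3): the paper simply asserts $\zeta(\eJac_b(\xi))=B[\eps_0^n,\eps_1^n,\eps_{n-1}^n]$ after showing the inclusion $\supseteq$, without arguing the reverse containment.

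The one place you make your life harder than necessary is the maximal-order claim. You correctly identify Stafford's criterion (prime Noetherian Auslander--Gorenstein CM implies maximal order) and you note that Auslander-regularity and CM hold for $\mathrm{gr}\,\eJac_b(\xi)$ via \cite{GomezLobillo}. But then you declare the ``crux'' to be lifting the \emph{maximal-order} property along the filtration. That is the wrong thing to lift. What passes from $\mathrm{gr}$ to the filtered ring under a separated exhaustive filtration with Noetherian graded are exactly the homological inputs---Auslander-regularity and the CM condition---by the standard filtered/graded transfer (this is how the paper handled $\eW_\sigma$ and $\bar\eW_{\sigma,\pp}$ earlier). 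Once $\eJac_b(\xi)$ itself is prime, Noetherian, Auslander-regular and CM over $K$, Stafford's theorem applies to it \emph{directly}; there is nothing further to descend. This is precisely what the paper does: it simply cites \cite{Stafford} for $\eJac_b(\xi)$ with no separate lifting argument. So the ``real difficulty'' you anticipate at the end does not exist; you already have all the pieces in hand.
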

\begin{proof}The proof is essentially the same as the proof of Proposition \ref{prop:Witt_PI}.  Using induction one can prove that
$$\eps_1\eps_{n-1}^k=\xi^{-2k}\eps_{n-1}^k\eps_1-
b\xi^{-2k}\eps_{n-1}^{k-1}\big(\eps_0\sum_{i=0}^{k-1}\xi^{i}+(1-\xi^2)\sum_{i=0}^{k-1}\xi^{2i}\big).$$Since $\xi$ is an $n$-th root of unity, we see that for $k=n$ the sums in the right-hand side vanishes and we are left with $\eps_1\eps_{n-1}^{n}=\eps_{n-1}^n\eps_1$. Similarly (by induction),
$$\eps_{n-1}\eps_1^k=\xi^{2k}\eps_1^k\eps_{n-1}+b\xi^{k-1}\sum_{i=0}^{k-1}\xi^i \eps_1^{k-1}\eps_0+b(1-\xi^2)\sum_{i=0}^{k-1}\xi^{2i}\eps_1^{k-1}.$$ From this follows that $\eps_{n-1}\eps_1^n=\eps_1^n\eps_{n-1}$. That $\eps_1^n$ commutes with $\eps_0$ and that $\eps_0^n$ commutes with $\eps_{n-1}$ and $\eps_1$ is easy. Hence $\zeta\big(  \eJac_b(\xi)\big)=B[\eps_0^n,\eps_1^n,\eps_{n-1}^n]$. From this the first statement is clear. That $\eJac_b(\xi)$ is PI also follows from the finiteness \cite[13.1.13]{McConnellRobson}, and consequently, if prime, $\eJac_b(\xi)$ an order in its quotient ring of fractions \cite[13.6.6]{McConnellRobson} and \cite{Stafford} once again shows the statement on maximality.  
\end{proof}Notice that the centre is independent on $b\in B$. 

The fact that $  \eJac_b(\xi)$ is a PI-ring has some very nice consequences. Recall first that the \emph{Azumaya locus} of $ \eJac_b(\xi)$, $\azu( \eJac_b(\xi))$ is the set of maximal ideals $\mathfrak{m}$ in $\zeta( \eJac_b(\xi))$ such that $ \eJac_b(\xi)/\mathfrak{m}$ is a central simple algebra over $\zeta( \eJac_b(\xi))/\mathfrak{m}$. This is an open subscheme of $\zeta( \eJac_b(\xi))$. The complement is called the \emph{ramification locus}, $\ram( \eJac_b(\xi))$ and is a closed subscheme of codimension one. 

We begin by observing that the commutative points (i.e., the 1-dimensional simple modules) are given as follows. If $\eps_0$, $\eps_1$ and $\eps_{n-1}$ would commute, then we would have the relations
 \begin{align*}
(1 -\xi)\eps_1\eps_0 &= 0,\\
(1-\xi)\eps_0\eps_{n-1} &= 0,\\
 (1-\xi^2)\eps_1\eps_{n-1}&=b\eps_0+b(1-\xi^2).
\end{align*}Now, if $\eps_0=0$ then $\eps_1\eps_{n-1}=b$ and so determine a hyperbola in $\zeta( \eJac_b(\xi))$ via restriction (i.e., $\eps_1^n\eps_{n-1}^n=b$). On the other hand, if $\eps_1=0$ then either $\eps_0=0$, which leads to a contradiction, or $\eps_{n-1}=0$ which, after localization at $b$, gives the point $(-(1-\xi^2)/b,0,0)$. The last case (when $\eps_{n-1}=0$) is exactly the same. This shows that $\ram( \eJac_b(\xi))$ is not empty since the $1$-dimensional simples are in the ramification locus. 
 
One can also show that $\ram( \eJac_b(\xi))$ is the set of maximal ideals of $\zeta( \eJac_b(\xi))$ over which there are more than one maximal ideal in $ \eJac_b(\xi)$. In fact, it is the ramification locus that is the most interesting as this is where the non-commutativity comes in. Therefore, to have a complete description of $\ram(\eJac_b(\xi))$ is very desirable. 

\begin{prop}Put $\mathbf{p}:=(p_1,p_2,p_3)$ with $p_i\in B$. The elements $$\Omega_\mathbf{p}:=p_1\eps_{n-1}\eps_1-p_2\eps_1\eps_{n-1}+p_3\eps_0^2+b\frac{p_2\xi^{-1}-p_1}{1-\xi}\eps_0-b(p_1-p_2),$$ defines a family of normal elements. In fact, $\Omega_\mathbf{p} \cdot x=\tau(x)\cdot\Omega_\mathbf{p}$, with $\tau(\eps_0)=\eps_0$, $\tau(\eps_1)=\xi^2\eps_1$ and $\tau(\eps_{n-1})=\xi^{-2}\eps_{n-1}$. Hence $ \eJac_b(\xi)/(\Omega_\mathbf{p})$ defines a family of non-commutative quadric surfaces in $\ncspec( \eJac_b(\xi))$. 
\end{prop}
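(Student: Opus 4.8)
The plan is to exploit that $\eJac_b(\xi)$ is generated as a $B$-algebra by $\eps_0,\eps_1,\eps_{n-1}$, with $B$ central and fixed by $\tau$. Since $\Omega_\mathbf{p}\cdot x=\tau(x)\cdot\Omega_\mathbf{p}$ is multiplicative in $x$ (if it holds for $x_1,x_2$ it holds for $x_1x_2$, because $\tau$ is an endomorphism) and trivially holds on the central $B$, normality need only be verified on the three generators. So I first confirm that $\tau$ is a genuine automorphism. As $\tau$ scales each generator by a unit ($1$, $\xi^2$, $\xi^{-2}$) and fixes $B$, it is automatically invertible, and I only have to check that it preserves the relations (\ref{eq:envS}): applying $\tau$ to $\eps_0\eps_1-\xi\eps_1\eps_0$ returns $\xi^2(\eps_0\eps_1-\xi\eps_1\eps_0)$, applying it to $\eps_{n-1}\eps_0-\xi\eps_0\eps_{n-1}$ returns $\xi^{-2}(\eps_{n-1}\eps_0-\xi\eps_0\eps_{n-1})$, and the third relation is fixed outright, since both quadratic monomials and $b\eps_0+b(1-\xi^2)$ are fixed.

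Next I carry out the generator computations. Abbreviate $c:=b(1-\xi^2)$, $\alpha:=b(p_2\xi^{-1}-p_1)/(1-\xi)$ and $\beta:=b(p_1-p_2)$ (here $1-\xi$ is a unit under the standing tameness hypotheses), so that $\Omega_\mathbf{p}=p_1\eps_{n-1}\eps_1-p_2\eps_1\eps_{n-1}+p_3\eps_0^2+\alpha\eps_0-\beta$ and the third relation reads $\eps_{n-1}\eps_1=\xi^2\eps_1\eps_{n-1}+b\eps_0+c$. The generator $\eps_0$ is easy: from $\eps_0\eps_1=\xi\eps_1\eps_0$ and $\eps_{n-1}\eps_0=\xi\eps_0\eps_{n-1}$ one gets $\eps_{n-1}\eps_1\eps_0=\eps_0\eps_{n-1}\eps_1$ and $\eps_1\eps_{n-1}\eps_0=\eps_0\eps_1\eps_{n-1}$, while $\eps_0^2$ and $\eps_0$ plainly commute with $\eps_0$, so $\Omega_\mathbf{p}\eps_0=\eps_0\Omega_\mathbf{p}=\tau(\eps_0)\Omega_\mathbf{p}$. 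The substance is the identity $\Omega_\mathbf{p}\eps_1=\xi^2\eps_1\Omega_\mathbf{p}$. Expanding $\Omega_\mathbf{p}\eps_1-\xi^2\eps_1\Omega_\mathbf{p}$, the $p_3$-term drops out because $\eps_0^2\eps_1=\xi^2\eps_1\eps_0^2$, and repeated use of the third relation together with $\eps_0\eps_1=\xi\eps_1\eps_0$ collapses everything onto $\eps_1\eps_0$ and $\eps_1$, with coefficients
$$\big(b\xi p_1-bp_2+\alpha\xi(1-\xi)\big)\eps_1\eps_0+\big(c(p_1-p_2)+\beta(\xi^2-1)\big)\eps_1.$$
Substituting $\alpha$ and $\beta$ makes both coefficients vanish identically. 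The identity $\Omega_\mathbf{p}\eps_{n-1}=\xi^{-2}\eps_{n-1}\Omega_\mathbf{p}$ is entirely analogous, with the roles of $\eps_1,\eps_{n-1}$ and of $\xi^2,\xi^{-2}$ interchanged, and the same cancellation occurs.

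The main obstacle is exactly this cancellation. Because $\eps_{n-1}\eps_1$ and $\eps_1\eps_{n-1}$ differ by the inhomogeneous term $b\eps_0+c$, commuting a generator past the leading quadratic part $p_1\eps_{n-1}\eps_1-p_2\eps_1\eps_{n-1}$ produces degree-one and degree-zero corrections, and normality forces these to be absorbed precisely by the linear term $\alpha\eps_0$ and the constant $-\beta$. The values of $\alpha,\beta$ in the statement are tuned for this, so the difficulty is careful bookkeeping of the lower-order terms rather than any structural subtlety.

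Finally, normality gives $(\Omega_\mathbf{p})=\Omega_\mathbf{p}\,\eJac_b(\xi)=\eJac_b(\xi)\,\Omega_\mathbf{p}$, a two-sided ideal, so the quotient $\eJac_b(\xi)/(\Omega_\mathbf{p})$ is a well-defined associative $B$-algebra. Since $\Omega_\mathbf{p}$ is quadratic in the generators, cutting it out imposes a single normal quadratic relation, the non-commutative analogue of intersecting with a quadric; letting $\mathbf{p}=(p_1,p_2,p_3)$ range over $B^3$ (up to scaling) yields the asserted family of non-commutative quadric surfaces inside $\ncspec(\eJac_b(\xi))$.
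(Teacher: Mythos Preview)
Your proof is correct and carries out exactly what the paper's proof leaves implicit: the paper simply writes ``Tedious, but straightforward, computation'', and you have executed that computation cleanly, first reducing normality to the three generators via multiplicativity, then verifying the cancellations term by term using the relations~(\ref{eq:envS}). There is nothing to add; your argument is the fleshed-out version of the paper's one-line proof.
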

\begin{proof}
Tedious, but straightforward, computation. 
\end{proof}
An interesting special case is when $p_2=p_1\xi$. Then we get the pencil of quadrics
$$\Omega_{p_1,p_3}:=p_1\eps_{n-1}\eps_1-\xi p_1\eps_1\eps_{n-1}+p_3\eps_0^2-bp_1(1-\xi).$$The intersection with the center is given by the (commutative) quadric surface
$$\zeta(\Omega_{p_1,p_3})=p_1(1-\xi)\eps_1^n\eps_{n-1}^n+p_3\eps_0^{2n}-bp_1(1-\xi).$$If $\eps_0^{2n}=0$, we see that $\zeta(\Omega_{p_1,p_3})$ includes the hyperbola $\eps_1^n\eps_{n-1}^n=b$ from before, and hence $\Omega_{p_1,p_3}$ intersects the ramification locus. 

From now on we work fibre-by-fibre above $\Lambda$. In other words, we restrict to $B\otimes_{\Lambda} k(\mathfrak{p})\to \eJac_b(\xi)\otimes_\Lambda k(\mathfrak{p})$. Recall the notation $\bar B:=B\otimes_{\Lambda} k(\mathfrak{p})$. We also introduce the notations $$\bar{\eJac}_b(\xi):=\eJac_b(\xi)\otimes_{\Lambda} k(\mathfrak{p}),\quad \bar B^{\mathrm{alg}}:=B\otimes_{\Lambda} k(\mathfrak{p})^{\mathrm{alg}}, \quad \bar{\eJac}_b(\xi)^{\mathrm{alg}}:=\eJac_b(\xi)\otimes_{\Lambda} k(\mathfrak{p})^{\mathrm{alg}}$$ 
\begin{remark}
Notice that if $\xi$ reduces to $1$ modulo $\pp\neq (0)$ then $\bar{\eJac}_b(\xi)^{\mathrm{alg}}$ is the Weyl algebra (over an affine variety) in characteristic $p$, and is therefore Azumaya. If $b=0$ modulo $\pp$ we get a quantum three-space whose geometry (i.e., representation theory) is well understood over $K^\mathrm{alg}$, with $K$ a field of characteristic zero. Over (non-algebraically closed) fields of positive characteristic the geometry seems to be considerably more complicated, and I'm not aware of any results in this direction. 
\end{remark}
\begin{corollary}Let $\phi: \ncspec(\bar{\eJac}_b(\xi))\to \Spec(\zeta(\bar{\eJac}_b(\xi)))$ be the morphism defined by contraction of prime ideals. 
\begin{itemize}
\item[(a)] $\phi$ is a finite, surjective morphism, and every prime of $\bar{\eJac}_b(\xi)$ intersects the centre non-trivially. 
\item[(b)] Then $\mathtt{ram}(\bar{\eJac}_b(\xi)^{\mathrm{alg}})$ is a Zariski-closed subscheme of exact codimension one (i.e., there is at least one component of pure codimension one). 
\end{itemize}
\end{corollary}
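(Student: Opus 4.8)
The plan is to treat the two parts separately, leaning on the PI-theoretic facts already in place: writing $R:=\bar{\eJac}_b(\xi)$, we know $R$ is a prime affine PI algebra over the field $k(\pp)$ that is finite as a module over its centre $Z:=\zeta(R)=\bar B[\eps_0^n,\eps_1^n,\eps_{n-1}^n]$, and that after base change $R^{\mathrm{alg}}$ is a maximal order over $Z^{\mathrm{alg}}:=\zeta(R^{\mathrm{alg}})$. For part (a), recall that a point of $\ncspec(R)$ is a simple $R$-module $M$, and $\phi$ sends it to $\Ann_R(M)\cap Z$. By Kaplansky's theorem for affine PI algebras every simple module is finite-dimensional over $Z$ and $\Ann_R(M)$ is a maximal (hence prime) ideal, so $\Ann_R(M)\cap Z$ is a maximal ideal of $Z$ and $\phi$ is well defined. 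I would then invoke the lying-over/going-up package for rings finite over their centre (e.g.\ \cite[Ch.~13]{McConnellRobson}): surjectivity because for each maximal $\mathfrak m\subset Z$ the nonzero finite-dimensional $Z/\mathfrak m$-algebra $R/\mathfrak m R$ admits a simple module lying over $\mathfrak m$, and finiteness of fibres because $R/\mathfrak m R$ is Artinian with only finitely many simples; together these say $\phi$ is a finite, surjective morphism. The clause that every nonzero prime meets the centre non-trivially follows from Posner's theorem: $R\otimes_Z\Frac(Z)$ is a central simple (hence simple) algebra into which $R$ embeds as a torsion-free $Z$-module, so a prime $P$ with $P\cap Z=0$ would survive as a nonzero ideal of a simple ring and force $P=0$.

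For part (b) the argument runs over $k(\pp)^{\mathrm{alg}}$. First, $\azu(R^{\mathrm{alg}})$ is open in $\Spec Z^{\mathrm{alg}}$, as recorded when the ramification locus was introduced, so $\ram(R^{\mathrm{alg}})$ is closed. Next, since $R^{\mathrm{alg}}$ is prime PI, Posner's theorem again identifies $R^{\mathrm{alg}}\otimes_{Z^{\mathrm{alg}}}\Frac(Z^{\mathrm{alg}})$ with a central simple algebra over the function field $\Frac(Z^{\mathrm{alg}})$; hence $R^{\mathrm{alg}}$ is an order in it and is Azumaya over a dense open, so $\ram(R^{\mathrm{alg}})$ is a \emph{proper} closed subset of codimension $\ge 1$. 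Non-emptiness is already in hand: the one-dimensional simple modules computed above—the hyperbola $\eps_1^n\eps_{n-1}^n=b$ in the slice $\eps_0=0$, together with the isolated central point—all lie in the ramification locus. Finally, because $R^{\mathrm{alg}}$ is a maximal order over the normal (indeed regular, once $B$ is regular) centre $Z^{\mathrm{alg}}$, I would apply the purity theorem for the ramification of maximal orders: the non-Azumaya locus of a maximal order finite over a normal Noetherian domain is pure of codimension one. Combined with non-emptiness this produces at least one component of pure codimension one, which is exactly the assertion of (b).

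The main obstacle is this last, purity, step. Finiteness and surjectivity in (a), and the generic-Azumaya statement in (b), are routine consequences of the theorems of Kaplansky and Posner; but \emph{exact} codimension one genuinely requires the maximal-order structure and the divisorial nature of its ramification. The explicit one-dimensional simples only certify that $\ram(R^{\mathrm{alg}})$ is non-empty—they themselves trace out loci of codimension $\ge 2$ in the three-dimensional centre—so they cannot pin down the codimension on their own. One should therefore cite a purity result of Auslander--Goldman type (or argue directly that the reflexive discriminant ideal of the maximal order $R^{\mathrm{alg}}$ cuts out a divisor), and the point demanding real care is the verification of hypotheses: normality of $Z^{\mathrm{alg}}$ and, above all, that maximality of the order is preserved under the base change $\otimes_\Lambda k(\pp)^{\mathrm{alg}}$, since maximality is not automatically stable under passage to a fibre.
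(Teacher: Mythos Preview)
Your treatment of (a) is exactly the standard PI-theory invoked in the paper, so nothing to add there.

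For (b) your argument is correct but follows a genuinely different route from the paper's. You appeal to Auslander--Goldman purity for maximal orders over a normal base, which in fact yields the stronger conclusion that $\ram(R^{\mathrm{alg}})$ is \emph{pure} of codimension one. The paper instead argues by contradiction via the Brown--Goodearl theorem: if $\bar{\eJac}_b(\xi)^{\mathrm{alg}}$ were reflexive Azumaya (ramification in codimension $\geq 2$), then since it is Auslander-regular, Cohen--Macaulay and a domain, \cite[Theorem~3.8]{BrownGoodearl_Homological_PI} forces $\ram(\bar{\eJac}_b(\xi)^{\mathrm{alg}})$ to coincide with the singular locus of the centre; but the centre is regular while the ramification locus is non-empty, a contradiction. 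The advantage of the paper's route is that Auslander-regularity and the CM property were established directly for the fibre in the preceding propositions, so the very base-change worry you flag---whether maximality of the order survives reduction to $k(\pp)^{\mathrm{alg}}$---simply does not arise. Your approach buys a cleaner and stronger conclusion (purity rather than merely ``at least one component''), but at the cost of having to justify that maximality is stable under $\otimes_\Lambda k(\pp)^{\mathrm{alg}}$; you are right to identify this as the delicate point, and it is precisely what the paper's homological argument circumvents.
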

\begin{proof}This is standard PI-theory (see for instance \cite{BrownGoodearl} and \cite{McConnellRobson}). The only point which might require a proof is the claim that the ramification locus has exact codimension one. First we notice that $\mathtt{ram}(\bar{\eJac}_b(\xi)^{\mathrm{alg}})$ is not empty by the discussion prior to the statement of the corollary. Recall that a \emph{reflexive Azumaya algebra} is an algebra with ramification locus in codimension two, and reflexive as a module over the centre. Assume that $ \bar{\eJac}_b(\xi)^{\mathrm{alg}}$ is reflexive Azumaya, in particular that $\ram(\bar{\eJac}_b(\xi)^{\mathrm{alg}})$ is strictly in codimension two. Then, since $\bar{\eJac}_b(\xi)^{\mathrm{alg}}$ is Auslander-regular, Cohen--Macaulay and a domain, we have by \cite[Theorem 3.8]{BrownGoodearl_Homological_PI} that the singular locus of $\zeta(\bar{\eJac}_b(\xi)^{\mathrm{alg}})$ coincide with $\ram(\bar{\eJac}_b(\xi)^{\mathrm{alg}})$. However, $\zeta(\bar{\eJac}_b(\xi)^{\mathrm{alg}})$ is a regular scheme and $\ram(\bar{\eJac}_b(\xi)^{\mathrm{alg}})\neq \emptyset$, so we have a contradiction. 
\end{proof}
%Notice that $\bar{\eJac_b(\xi)}$ actually is reflexive as a module over the centre since $\zeta(\bar{\eJac_b(\xi)})$ is Cohen--Macaulay and $\bar{\eJac_b(\xi)}$ is a free module over $\zeta(\bar{\eJac_b(\xi)})$. Therefore $\bar{\eJac_b(\xi)}$ is Cohen--Macaulay as a $\zeta(\bar{\eJac_b(\xi)})$-module, whence reflexive. 

\begin{remark}We now digress slightly and give an alternative description of $ \eJac_b(\xi)$. A \emph{down-up algebra} $D:=D(\alpha,\beta,\gamma)$, with
$\alpha,\beta,\gamma$ in a commutative ring $S$, is the algebra generated over $S$ by $d$ and $u$
subject to the relations
\begin{align*}
d^2u&=\alpha dud-\beta ud^2-\gamma d,\\
du^2&=\alpha udu-\beta u^2d-\gamma u.
\end{align*}We work with a slightly more general definition here since ordinarily in the definition of down-up algebras one assumes that $S$ is a field. 
\begin{prop}\label{prop:downup}
The algebra defined by {\rm (\ref{eq:envS})} is isomorphic to the
  down-up algebra $D_{\xi}$ over $B$ defined by the relations
\begin{align}\label{eq:qsl2DU}
\begin{split}
d^2u&=\xi(1+\xi)dud-\xi^3 ud^2+a(1-\xi^2)(1-\xi)d,\\
du^2&=\xi(1+\xi)udu-\xi^3 u^2d+a(1-\xi^2)(1-\xi) u.
\end{split}
\end{align}
\end{prop}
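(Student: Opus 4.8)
The plan is to produce an explicit isomorphism by matching generators and then exhibiting a two-sided inverse. Abbreviate $c:=b(1-\xi^2)$, so that the three relations of (\ref{eq:envS}) read $\eps_0\eps_1=\xi\eps_1\eps_0$, $\eps_{n-1}\eps_0=\xi\eps_0\eps_{n-1}$, and $\eps_{n-1}\eps_1-\xi^2\eps_1\eps_{n-1}=b\eps_0+c$. I would define a homomorphism $\phi\colon D_\xi\to\eJac_b(\xi)$ on generators by $d\mapsto\eps_{n-1}$ and $u\mapsto\eps_1$; under this assignment the down-up parameters are forced to be $\alpha=\xi(1+\xi)$, $\beta=\xi^3$ and $\gamma=-b(1-\xi^2)(1-\xi)$ (so the constant written ``$a$'' in (\ref{eq:qsl2DU}) is just $b$). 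The ``hidden'' degree-two generator $du$ of the down-up algebra then corresponds to $\eps_{n-1}\eps_1=\xi^2\eps_1\eps_{n-1}+b\eps_0+c$, so that $\eps_0=b^{-1}(du-\xi^2ud)-(1-\xi^2)$.

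First I would check that $\phi$ is well defined, i.e.\ that the two cubic relations (\ref{eq:qsl2DU}) hold in $\eJac_b(\xi)$ after substitution. The key point is that both follow immediately from the third relation of (\ref{eq:envS}) together with the two $q$-commutations. Writing the third relation as $du-\xi^2ud=b\eps_0+c$ and multiplying on the \emph{left} by $d=\eps_{n-1}$, one substitutes $d\eps_0=\xi\eps_0 d$ (the second relation) and re-substitutes $b\eps_0=du-\xi^2ud-c$; collecting terms yields exactly $d^2u=\xi(1+\xi)dud-\xi^3ud^2+b(1-\xi^2)(1-\xi)d$. Multiplying the same relation on the \emph{right} by $u=\eps_1$ and using $\eps_0 u=\xi u\eps_0$ (the first relation) gives the companion relation $du^2=\xi(1+\xi)udu-\xi^3u^2d+b(1-\xi^2)(1-\xi)u$ in the symmetric fashion. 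This is the only genuine computation and it is short.

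For bijectivity I would exhibit the inverse directly: define $\psi\colon\eJac_b(\xi)\to D_\xi$ by $\eps_{n-1}\mapsto d$, $\eps_1\mapsto u$ and $\eps_0\mapsto b^{-1}(du-\xi^2ud)-(1-\xi^2)$. Checking that $\psi$ respects (\ref{eq:envS}) is the reverse of the computation above: after clearing $b^{-1}$ the two $q$-commutations become identities for how $du-\xi^2ud$ commutes past $u$ and $d$ in $D_\xi$, which are precisely (\ref{eq:qsl2DU}), while the third relation of (\ref{eq:envS}) is built into the definition of $\psi(\eps_0)$. Then $\psi\phi$ and $\phi\psi$ are the identity on generators, the only nontrivial check being $\phi\psi(\eps_0)=b^{-1}(\eps_{n-1}\eps_1-\xi^2\eps_1\eps_{n-1})-(1-\xi^2)=\eps_0$, which is again (\ref{eq:envS}). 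As an alternative one can compare Poincar\'e--Birkhoff--Witt bases: since $\beta=\xi^3\neq0$, the algebra $D_\xi$ carries the Benkart--Roby basis $\{u^i(du)^jd^k\}$, and the preceding proposition presents $\eJac_b(\xi)$ as the iterated Ore extension $B[\eps_1][\eps_0;\tau][\eps_{n-1};\omega,\Delta]$ with basis $\{\eps_1^i\eps_0^j\eps_{n-1}^k\}$, and $\phi$ carries one onto the other. The main obstacle, and the point demanding care, is that $\psi$ divides by $b$: the element $\eps_0$ lies in the subalgebra generated by $\eps_1,\eps_{n-1}$ only after $b$ is inverted, so the stated isomorphism with the two-generated algebra $D_\xi$ is really an isomorphism over $B[b^{-1}]$ (equivalently over the fibres where $b$ is a unit, the setting used throughout this section). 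One should either record this hypothesis explicitly or read $D_\xi$ as carrying the extra degree-two generator $du$ built in.
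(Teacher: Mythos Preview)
Your approach is essentially identical to the paper's: solve for $b\eps_0$ from the third relation of (\ref{eq:envS}), substitute into the two $q$-commutations (equivalently, multiply the third relation on the left by $\eps_{n-1}$ and on the right by $\eps_1$), and rename $\eps_{n-1}\to d$, $\eps_1\to u$. The paper's proof is a two-line sketch of exactly this, whereas you carry out the computation and also write down the inverse explicitly; your observation that the inverse map requires $b$ to be invertible (so the isomorphism is really over $B[b^{-1}]$) is a genuine point of care that the paper does not address.
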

\begin{proof}
The proof is very simple. Solve for $a\eps_0$ in (\ref{eq:envS}) and insert in the other two relations. Rename $\eps_{n-1}\to d$ and $\eps_1\to u$ and simplify.
\end{proof}Consequently, when $S$ is a field, some of the above properties could have been deduced from the description of $\eJac_b(\xi)$ as a down-up algebra. 
\end{remark}
\subsubsection{Zeta functions}
In order to study the arithmetic properties of $\eKW_b(\xi)$ and $\eJac_b(\xi)$ we will introduce a \emph{formal} zeta element. The problem with this is that it depends on the ramification locus (in particular, we only work in the PI-case) and so is not easily computed. 

We begin by recalling the set-up in a diagram as follows:
$$\xymatrix{X=\Spec(A)\ar@{.>}[r]\ar[ddd]\ar@/_2pc/@{-}[ddd]_G & \underline{\eKW_b(\xi)}\ar@{-->}[d]\\
&\underline{\eJac_b(\xi)}\ar[d]^f\\
&\underline{\zeta(\eJac_b(\xi))}\ar[dl]\\
Y=\Spec(B)\ar[d]\\
\underline{\Lambda}}$$

We now define the zeta elements fiber-by-fiber over $\Lambda$ as the formal elements:
\begin{equation}\label{eq:zeta}
\mathbf{z}^f_q(t):=\zeta_{\mathrm{log},\nearrow}^f(t)\boxplus\zeta_{\ram(f)}(t)\boxplus \zeta_{\azu(f)}(t), \quad q=p^k,
\end{equation}where each piece is defined below. This is a purely formal construction, although each piece is in principle computable. 

Let $R$ be a PI-ring with centre $\zeta(R)$ and morphism $f: \underline{R}\to \underline{\zeta(R)}$ (dual to the inclusion), and with ramification locus $\ram(f)$. We now \emph{define}
$$\# \underline{R}(\F_{q^k}):=\sum_{r\in\ram(f)(\F_{q^k})}\Big(\sum_i \dim_{\F_{q^k}}\big(R/\pp_i\big)\Big),$$ where $\pp_i$ are the maximal ideals above $r$. Notice that this is a finite sum. Then we define
$$\zeta_{\ram(f)}(t):=\mathrm{exp}\Big(\sum_{k=1}^\infty \big(\#\underline{R}(\F_{q^k})\big)\frac{t^k}{k}\Big).$$

The piece $\zeta_{\azu(f)}(t)$ is simply the zeta function of the Azumaya locus in the classical sense. 

Now, the last piece, $\zeta_{\nearrow}^f(t)$, is the ``tangent zeta element'' and measures the infinitesimal structure of the fibres. This is in fact the only purely formal part. Let $N$ be the PI-degree of $R$ and form the set $\mathrm{Mat}_{N\times N}(\Z)/\mathbb{S}_{N^2}$, where the symmetric group $\mathbb{S}_{N^2}$ acts on $\mathrm{Mat}_{N\times N}(\Z)$ by permuting the entries in the matrices in the obvious way. We denote the element in $\mathrm{Mat}_{N\times N}(\Z)/\mathbb{S}_{N^2}$, corresponding to $M\in\mathrm{Mat}_{N\times N}(\Z)$, as $[M]$. Notice that $\mathrm{Mat}_{N\times N}(\Z)/\mathbb{S}_{N^2}$ can be identified with the set of unordered $N^2$-tuples of integers. 

We let $\langle \Z(N)\rangle$ denote the free abelian group generated by $\mathrm{Mat}_{N\times N}(\Z)/\mathbb{S}_{N^2}$. For every, $y\in\ram(f)(\F_{q^k})$ we associate an element of $\langle \Z(N)\rangle$ as follows. Let $\pp_1, \pp_2, \dots, \pp_s$, $1\leq s\leq N$, be the $\F_{q^k}$-rational points above $y$, identified with the simple $R$-modules $\mathfrak{m}_i:=R/\pp_i$. Then we form the matrix
$$\Ext_y(k):=\begin{pmatrix}
	e_{11} & e_{12} & \dots & e_{1N}\\
	e_{21} & e_{22} & \dots & e_{2N}\\
	\vdots & \vdots & \ddots & \vdots\\
	e_{N1} & e_{N2} & \dots & e_{NN}
\end{pmatrix}, \quad \text{with}\quad e_{i,j}:=\dim_{\F_{q^k}}\big(\Ext_R^1(\mathfrak{m}_i,\mathfrak{m}_j)\big).$$The entries missing to get a full $N\times N$-matrix are filled with zeros. Now, the element 
$$[\Ext(k)]:=\sum_{y\in\ram(f)(\F_{q^k})}[\Ext_y(k)]\in\langle \Z(N)\rangle$$ is a finite sum and we can form the ``logarithmic tangent-zeta element''
$$\zeta^f_{\mathrm{log},\nearrow}(t):=\sum_{k=1}^\infty \Big(\sum_{y\in\ram(f)(\F_{q^k})}[\Ext_y(k)]\Big)\frac{t^k}{k}=\sum_{k=1}^\infty [\Ext(k)]\frac{t^k}{k}.$$

There is a natural ``trace function'', $T:\langle \Z(N)\rangle \to \Z$, induced from the 
function
$$\mathrm{Mat}_{N\times N}(\Z)\to \Z,\quad (m_{ij})\mapsto \sum_{i,j} m_{ij}$$ and extended linearly. Applied to $[\Ext(k)]$ we get
$$T([\Ext(k)])=\sum_{i,j}^N\dim_{\F_{q^k}}\big(\Ext_R^1(\mathfrak{m}_i,\mathfrak{m}_j)\big)=\sum_{i,j}^N e_{ij}.$$We can now form the ``tangent zeta function'' as
$$\zeta_{\nearrow}^f(t):=\mathrm{exp}\Big(\sum_{k=1}^\infty T([\Ext(k)])\frac{t^k}{k}\Big).$$Clearly, $\zeta_{\mathrm{log},\nearrow}^f(t)$ encodes more information than $\zeta_{\nearrow}^f(t)$, as all the tangent dimensions are explicitly given (not just the sum of the dimensions).
\begin{remark}In order to compute the zeta element $\mathbf{z}^f_q$ we obviously need to know the ramification locus of $f$ as a first step. However, in general this is a significant one, especially since we need this over a non-algebraically closed field. This shows that the zeta element is not easily determined. On the other hand, it should be clear that there is a lot of arithmetic information encoded in this invariant and I suspect that it might be worthwhile to overcome the ramification hurdles. 
\end{remark}Finally we define the \emph{arithmetic zeta element} associated with $R$ to be the (formal) product
$$\mathbf{z}^f_{\mathrm{ar}}:=\prod_{\pp\in\underline{\Lambda}}\mathbf{z}^f_q, \quad q:=k(\pp).$$
%\subsubsection{Arithmetic Artin--Schreier schemes}

\vspace{0.1cm}
\begin{center}\rule{0.5\textwidth}{0.5pt}\end{center}
\vspace{0.3cm}

\subsection*{Final remark: Rational points on abelian varieties} 
Let $A/k$ be an abelian variety over a field $k$. Consider the group algebra $k[A]$, i.e., the algebra generated over $k$ by the closed points as basis. This is clearly a commutative, associative, ring with unity. Notice that if $K\supset k$ and $K'\supset k$ are different extensions of $k$, the $K$-rational points and $K'$-rational points are different so $k[A(K)]$ and $k[A(K')]$ are also different. Therefore we need to specify which closed points we mean. 

Now, $\partial_{\sigma^n}:=\id-\sigma^n$, where $\sigma\in\Gal(k^{\text{sep}}/k)$ and $n\geq 1$, acts on $k[A(K)]$, for any $k^{\text{sep}}\supseteq K\supseteq k$ (trivially on $k[A(k)]$). This operator can be viewed as measuring the effect $\sigma^n$ has on the $K$-rational points on $A$, and is as we now know a $\sigma^n$-twisted derivation. Therefore we can form the hom-Lie algebra $k[A(K)]\cdot\partial_{\sigma^n}$ and the associated equivariant hom-Lie algebra. Thus this structure not only, encapsulates the Galois-theoretic properties of the rational points of $A$, but also the dynamics of the family $\{\sigma^n\}_{n\in \mathbb{N}}$. 

Of particular interest here is also different subgroups, viz., $m$-torsion points $A[m](K)$, the associated Barsotti--Tate groups, and their induced equivariant hom-Lie algebras. 
 
The further investigations of the above topics is beyond the intended scope of this paper and have to wait for another time.

\vspace{0.1cm}
\begin{center}\rule{0.5\textwidth}{0.5pt}\end{center}
\vspace{0.3cm}

\bibliographystyle{alpha}
\bibliography{refarithom}

\begin{thebibliography}{GTL04}

\bibitem[And01]{Andre}
Yves Andr{\'e}.
\newblock Diff\'erentielles non commutatives et th\'eorie de {G}alois
  diff\'erentielle ou aux diff\'erences.
\newblock {\em Ann. Sci. \'Ecole Norm. Sup. (4)}, 34(5):685--739, 2001.

\bibitem[ASZ99]{ArtinSmallZhang}
M.~Artin, L.~W. Small, and J.~J. Zhang.
\newblock Generic flatness for strongly {N}oetherian algebras.
\newblock {\em J. Algebra}, 221(2):579--610, 1999.

\bibitem[Ber96]{Berthelot}
Pierre Berthelot.
\newblock $\mathscr{{D}}$-modules arithm\'etiques. {I}. {O}p\'erateurs
  diff\'erentiels de niveau fini.
\newblock {\em Ann. Sci. \'Ecole Norm. Sup. (4)}, 29(2):185--272, 1996.

\bibitem[BG97]{BrownGoodearl_Homological_PI}
K.~A. Brown and K.~R. Goodearl.
\newblock Homological aspects of {N}oetherian {PI} {H}opf algebras of
  irreducible modules and maximal dimension.
\newblock {\em J. Algebra}, 198(1):240--265, 1997.

\bibitem[BG02]{BrownGoodearl}
Ken~A. Brown and Ken~R. Goodearl.
\newblock {\em Lectures on algebraic quantum groups}.
\newblock Advanced Courses in Mathematics. CRM Barcelona. Birkh\"auser Verlag,
  Basel, 2002.

\bibitem[Bor10]{Borek}
Thomas Borek.
\newblock Arakelov theory of noncommutative arithmetic surfaces.
\newblock {\em J. Reine Angew. Math.}, 642:37--55, 2010.

\bibitem[Bui95]{BuiumP-adic}
Alexandru Buium.
\newblock Differential characters of abelian varieties over {$p$}-adic fields.
\newblock {\em Invent. Math.}, 122(2):309--340, 1995.

\bibitem[Bui97]{BuiumArDer}
Alexandru Buium.
\newblock Arithmetic analogues of derivations.
\newblock {\em J. Algebra}, 198(1):290--299, 1997.

\bibitem[Bui00a]{BuiumPi}
Alexandru Buium.
\newblock Continuous {$\pi$}-adic functions and {$\pi$}-derivations.
\newblock {\em J. Number Theory}, 84(1):34--39, 2000.

\bibitem[Bui00b]{BuiumMod}
Alexandru Buium.
\newblock Differential modular forms.
\newblock {\em J. Reine Angew. Math.}, 520:95--167, 2000.

\bibitem[DV02]{diVizio}
Lucia Di~Vizio.
\newblock Arithmetic theory of {$q$}-difference equations: the {$q$}-analogue
  of {G}rothendieck-{K}atz's conjecture on {$p$}-curvatures.
\newblock {\em Invent. Math.}, 150(3):517--578, 2002.

\bibitem[FV02]{FesenkoVostokov}
I.~B. Fesenko and S.~V. Vostokov.
\newblock {\em Local fields and their extensions}, volume 121 of {\em
  Translations of Mathematical Monographs}.
\newblock American Mathematical Society, Providence, RI, second edition, 2002.
\newblock With a foreword by I. R. Shafarevich.

\bibitem[GS90]{GilletSoule}
Henri Gillet and Christophe Soul{\'e}.
\newblock Arithmetic intersection theory.
\newblock {\em Inst. Hautes \'Etudes Sci. Publ. Math.}, (72):93--174 (1991),
  1990.

\bibitem[GTL04]{GomezLobillo}
J.~G{\'o}mez-Torrecillas and F.~J. Lobillo.
\newblock Auslander-regular and {C}ohen-{M}acaulay quantum groups.
\newblock {\em Algebr. Represent. Theory}, 7(1):35--42, 2004.

\bibitem[Har77]{Hartshorne}
Robin Hartshorne.
\newblock {\em Algebraic geometry}.
\newblock Springer-Verlag, New York, 1977.
\newblock Graduate Texts in Mathematics, No. 52.

\bibitem[HLS06]{HaLaSi}
Jonas~T. Hartwig, Daniel Larsson, and Sergei~D. Silvestrov.
\newblock Deformations of {L}ie algebras using {$\sigma$}-derivations.
\newblock {\em J. Algebra}, 295(2):314--361, 2006.

\bibitem[JL08]{Jin}
Quanqin Jin and Xiaochao Li.
\newblock Hom-{L}ie algebra structures on semi-simple {L}ie algebras.
\newblock {\em J. Algebra}, 319(4):1398--1408, 2008.

\bibitem[Ked10]{Kedlaya}
Kiran Kedlaya.
\newblock {\em {$p$}-adic differential equations}, volume 125 of {\em Cambridge
  Studies in Advanced Mathematics}.
\newblock Cambridge University Press, Cambridge, 2010.

\bibitem[Lar14]{LarssonLfunctions}
Daniel Larsson.
\newblock Arithmetic hom-{L}ie algebras, {$L$}-functions and $p$-adic {H}odge
  theory.
\newblock Preprint, 2014.

\bibitem[Lau03]{Laudal}
Olav~A. Laudal.
\newblock Noncommutative algebraic geometry.
\newblock In {\em Proceedings of the {I}nternational {C}onference on
  {A}lgebraic {G}eometry and {S}ingularities ({S}panish) ({S}evilla, 2001)},
  volume~19, pages 509--580, 2003.

\bibitem[LS05]{LaSiQuasi}
Daniel Larsson and Sergei~D. Silvestrov.
\newblock Quasi-hom-{L}ie algebras, central extensions and 2-cocycle-like
  identities.
\newblock {\em J. Algebra}, 288(2):321--344, 2005.

\bibitem[LS07]{LaSi}
Daniel Larsson and Sergei~D. Silvestrov.
\newblock Quasi-deformations of {${\mathfrak{sl}}\sb 2(\mathbb{F})$} using
  twisted derivations.
\newblock {\em Comm. Algebra}, 35(12):4303--4318, 2007.

\bibitem[Mil80]{Milne}
James~S. Milne.
\newblock {\em \'{E}tale cohomology}, volume~33 of {\em Princeton Mathematical
  Series}.
\newblock Princeton University Press, Princeton, N.J., 1980.

\bibitem[MR87]{McConnellRobson}
J.~C. McConnell and J.~C. Robson.
\newblock {\em Noncommutative {N}oetherian rings}.
\newblock Pure and Applied Mathematics (New York). John Wiley \& Sons Ltd.,
  Chichester, 1987.
\newblock With the cooperation of L. W. Small, A Wiley-Interscience
  Publication.

\bibitem[MS08]{MakSi}
Abdenacer Makhlouf and Sergei~D. Silvestrov.
\newblock Hom-algebra structures.
\newblock {\em J. Gen. Lie Theory Appl.}, 2(2):51--64, 2008.

\bibitem[Sau03]{Sauloy}
Jacques Sauloy.
\newblock Galois theory of {F}uchsian {$q$}-difference equations.
\newblock {\em Ann. Sci. \'Ecole Norm. Sup. (4)}, 36(6):925--968 (2004), 2003.

\bibitem[Sta94]{Stafford}
J.~T. Stafford.
\newblock Auslander-regular algebras and maximal orders.
\newblock {\em J. London Math. Soc. (2)}, 50(2):276--292, 1994.

\bibitem[Yau08]{YauEnv}
Donald Yau.
\newblock Enveloping algebras of {H}om-{L}ie algebras.
\newblock {\em J. Gen. Lie Theory Appl.}, 2(2):95--108, 2008.

\end{thebibliography}
\end{document}